\numberwithin{equation}{section}
\newtheorem{thm}[equation]{Theorem}
\newtheorem{cor}[equation]{Corollary}
\newtheorem{lem}[equation]{Lemma}
\newtheorem{prop}[equation]{Proposition}
\newtheorem{definition}[equation]{Definition}
\renewcommand\a{\alpha}
\renewcommand\b{\beta}
\newcommand\g{\gamma}
\renewcommand\d{\delta}
\newcommand\e{\varepsilon}
\renewcommand\l{\lambda}
\newcommand\D{\Delta}
\newcommand\f{\frac}
\newcommand{\Z}{{\mathbb{Z}}}
\newcommand{\R}{{\mathbb{R}}}
\newcommand{\C}{{\mathbb{C}}}
\renewcommand\i{^{-1}}
\renewcommand\({\left(}
\renewcommand\){\right)}
\newcommand{\sgn}{\operatorname{sgn}}
\newcommand{\wtilde}{\widetilde{w}}
\newcommand{\ntilde}{\widetilde{n}}
\newcommand{\utilde}{\widetilde{u}}
\newcommand{\atilde}{\widetilde{a}}
\newcommand{\btilde}{\widetilde{b}}
\newcommand{\gobble}[1]{}
  \newcommand{\rangeref}[2]{%
    \ref{#1}--\afterassignment\gobble\fam 0\ref{#2}%
  }
\newcommand{\crarrow}{\textsubscript{\ooalign{$\swarrow$\cr\hfil$\nwarrow$\hfil}}}
\newenvironment{smallarray}[1]
 {\null\,\vcenter\bgroup\scriptsize
  \arraycolsep=.13885em
  \hbox\bgroup$\array{@{}#1@{}}}
 {\endarray$\egroup\egroup\,\null}
\newcommand{\GL}{\text{GL}}
\newcommand{\djlz}{D_{j,l}^{(0)}}
\newcommand{\djlktwo}{D_{j,l,k}^{(2)}}
\newcommand{\ojlktwo}{O_{j,l,k}^{(2)}}
\newcommand{\ejlktwo}{\e_{j,l,k}^{(2)}}
\newcommand{\npdj}{n_{\pi(d),j}}
\newcommand{\npdl}{n_{\pi(d),l}}
\newcommand{\nprj}{n_{\pi(r),j}}
\newcommand{\eqand}{\quad\text{and}\quad}
\title{Schubert cells and Whittaker functionals for $\GL(n,\R)$ part I: Combinatorics}
\author{Doyon Kim}
\date{October 30, 2024}
\begin{document}

\begin{abstract}
We give a formula for a birational map on the Schubert cell associated to each Weyl group element of $G=\GL(n)$.
The map simplifies the UDL decomposition of matrices, providing structural insight into the Schubert cell decomposition of the flag variety $G/B$, where $B$ is a Borel subgroup. An application of the formula includes a new proof of the existence of Whittaker functionals for principal series representations of $\GL(n,\R)$ via integration by parts. In this paper, we establish combinatorial properties of the birational map and prove auxiliary results.
\end{abstract}

\maketitle

\tableofcontents
\section{Introduction}\label{sec:intro}
Whittaker functions are of crucial importance in studying automorphic forms, because they appear as the Fourier coefficients of automorphic forms and hence are used to construct the associated local $L$-functions. In terms of the representation theory of real reductive Lie groups $G$, they are completely described by Whittaker functionals, that is, the continuous linear functionals $\tau : V^\infty \to \C$ on the space of smooth vectors in an irreducible representation $(\pi,V)$ of $G$ which satisfy
\begin{equation}
\label{whitfunl}
    \tau\left(\pi(n)v\right) = \psi(n)\,\tau(v) \quad \text{for all} \quad n\in N, 
\end{equation}
where $N$ is the unipotent radical of a minimal parabolic subgroup of $G$ and $\psi : N \to \C^*$ is a nondegenerate character. The particular choices of $N$ and $\psi$ do not matter since $N$ is unique up to $G$-conjugacy and $\psi$ up to conjugacy under the normalizer of $N$. The ``multiplicity one theorem" asserts that the space of Whittaker functionals on irreducible representations of $\GL(r,\R)$ is at most one-dimensional. The multiplicity one theorem was originally proven by Piatetski-Shapiro \cite{piatetski1979multiplicity} and Shalika \cite{shalika1974multiplicity}. In \cite{kostant1978whittaker}, Kostant showed that the dimension of the space of Whittaker functionals for any principal series representation of a quasisplit linear Lie group is exactly one. \par
Via the Casselman-Wallach embedding, any irreducible representation of $\GL(r,\R)$ can be embedded into a principal series representation (see \cite{casselman1989canonical} and \cite{wallach2006asymptotic}). The Whittaker functionals for a principal series representation $(\pi,V)$ can be realized as distribution vectors satisfying the transformation law \eqref{whitfunl}, and the transformation law uniquely determines the Whittaker functionals on the open Schubert cell. The existence and uniqueness of the Whittaker functional $\tau$ amounts to the assertion that $\tau$ admits a canonical extension to the whole group $G$, and that the extension becomes unique if it is required to transform under the action of $N$ according to the character $\psi$. Casselman, Hecht, and Mili\v{c}i\'{c} take this point of view in \cite{casselman2000bruhat}, where they establish the existence and uniqueness of Whittaker functionals. \par
This paper is the first in a series of two papers that together give a new proof of the existence of Whittaker models on principal series representations of $\GL(r,\R)$ by reducing the analysis of Whittaker functionals to integration by parts. This series is based on the author's Ph.D. thesis \cite{kimWhit}, which originally presented this new proof. The proofs have been modified and improved in many places for clarity, but this paper refers to \cite{kimWhit} for more details where appropriate.
\subsection{Set-up and definitions}
Let $F$ be a field of characteristic $0$. Let $G=\GL(r,F)$, $B_{-}$ be its lower triangular Borel subgroup, and $N$ (resp.\ $N_{-}$) is the group of upper triangular (resp.\ lower triangular) unipotent matrices. The group $N$ can be identified with $F^d$ with $d=\frac{1}{2}r(r-1)$ via the coordinate entries $n_{i,j}$, $1\le i<j\le r$ of $n\in N$. Let $\Omega$ be the Weyl group of $G$. The Weyl group consists of permutation matrices $w=(w_{i,j})$ with 
\begin{equation}\label{pidef}w_{i,j}=\d_{j=\pi(i)},\end{equation}
in bijection with permutations $\pi$ in the symmetric group $\mathfrak{S}_{r}$. The group $G$ has the Bruhat decomposition $ G=\bigcup_{w\in\Omega} NwB_{-}$, where the union is disjoint. For $w\in \Omega$, the $N$-orbit $NwB_{-}$ of $wB_{-}$ is called the \emph{Schubert cell attached to} $w$. The Schubert cell attached to the identity element of the Weyl group is also called the \emph{open Schubert cell}. We write $S_w=NwB_{-}$. For each $w\in\Omega$ we may decompose $N$ as the product of the subgroups
\begin{equation}\label{nws}
    \aligned
    N^w & = \{n\in N\,|\,w\i n w\in N_{-}\} =  N\cap wN_{-}w\i,\\
    N_w & = \{n\in N\,|\,w\i n w\in N\} =   N \cap wNw\i = 
    w N_{w\i}w\i.
    \endaligned
\end{equation}
It is clear that for each $w\in\Omega$, we have $N=N_wN^w  = N^wN_w$.
\begin{lem}
We have $S_w=N_w w B_{-}$, hence $S_w$ is a left $N_w$-orbit of $wB_{-}$. The Schubert cell $S_w$ is contained in the set $wNB_{-}$, which satisfies $wNB_{-}= N_w w N^{w\i}B_{-}$.
\end{lem}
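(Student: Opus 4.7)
The plan is to manipulate the definitions of $N_w$, $N^w$, and $N^{w\i}$ together with the factorization $N=N_wN^w$ mentioned right before the lemma. Every statement reduces to pushing a subgroup through $w$ via the two relations $w\i N^w w\subset N_{-}$ and $w\i N_w w\subset N$.

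First I would prove $S_w=N_w w B_{-}$. Writing $N=N_wN^w$ gives
\[
S_w=NwB_{-}=N_wN^w w B_{-}=N_w\,w\,(w\i N^w w)\,B_{-}.
\]
By the definition of $N^w$ in \eqref{nws}, the subgroup $w\i N^w w$ lies in $N_{-}\subset B_{-}$, so $(w\i N^w w)B_{-}=B_{-}$, and we obtain $S_w=N_wwB_{-}$, exhibiting $S_w$ as a single left $N_w$-orbit of $wB_{-}$.

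Next I would show $S_w\subset wNB_{-}$. Using $S_w=N_w w B_{-}$ from the previous step,
\[
S_w=N_w\,w\,B_{-}=w\,(w\i N_w w)\,B_{-}\subset w N B_{-},
\]
because $w\i N_w w\subset N$ by the definition of $N_w$ in \eqref{nws}.

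For the final identity $wNB_{-}=N_w w N^{w\i}B_{-}$, I would apply the factorization $N=N_{w\i}N^{w\i}$ (which is the same decomposition \eqref{nws} applied to $w\i$ in place of $w$). This yields
\[
wNB_{-}=wN_{w\i}N^{w\i}B_{-}=(wN_{w\i}w\i)\,w\,N^{w\i}B_{-}.
\]
Since $N_w=wN_{w\i}w\i$ (the conjugation identity already recorded in \eqref{nws}), the right-hand side equals $N_wwN^{w\i}B_{-}$, which is the desired expression.

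The whole argument is formal manipulation; I do not expect a genuine obstacle. The one subtlety worth double-checking is the direction of the conjugations: one must be careful that $N^w$ (not $N_w$) is the part absorbed into $B_{-}$ after conjugation by $w\i$, and that in the last identity it is $N^{w\i}$ (not $N^w$) that appears, because the decomposition is performed before multiplying by $w$ on the left rather than after.
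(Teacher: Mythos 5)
Your proof is correct and follows essentially the same route as the paper's: decompose $N$ as $N_wN^w$ (resp.\ $N_{w\i}N^{w\i}$), push the appropriate factor through $w$ using the conjugation relations in \eqref{nws}, and absorb $w\i N^w w\subset N_{-}$ into $B_{-}$. You are just a bit more explicit about the intermediate conjugation step $N^w w=w\,(w\i N^w w)$.
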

\begin{proof}
We have
$S_w=\left(N_wN^w\right)wB_{-}=N_w\left(N\cap wN_{-}w\i\right)wB_{-}=N_w wB_{-}$. The Schubert cell $S_w$ further satisfies
\[
  S_w=N_w wB_{-}=wN_{w\i}B_{-}\subseteq wNB_{-}=wN_{w\i}N^{w\i}B_{-}=N_w w N^{w\i}B_{-},\]
thus $S_w$ is contained in $wNB_{-}$, the left $N_w$-orbit of $wN^{w\i}B_{-}$.
\end{proof}
Write $Y_w=wNB_{-}$. Since $Y_w$ contains $S_w$, we can write $G$ as an overlapping union
\begin{equation}\label{overlappingunion}
G=\bigcup\limits_{w\in\Omega}Y_w=\bigcup\limits_{w\in\Omega}N_wwN^{w\i}B_{-}.\end{equation}
Each set $Y_w=N_wwN^{w\i}B_{-}$ has a large overlap with the open Schubert cell $NB_{-}$. The overlap is dense and open in $Y_w$. Writing $g\in Y_w$ as $g=n_1\cdot w\cdot n\cdot b$ with $n_1 \in N_w$, $n\in N^{w\i}$, and $b\in B_{-}$, we see that $g$ lies in the open Schubert cell if and only if the matrix $wn\in wN^{w\i}$ admits a $UDL$ decomposition. Henceforth we shall assign a set of coordinates on $wN^{w\i}B_-$ that describes the entries of $wn$. \par 
Let $e_{i,j}$ be an elementary matrix with $1$ in the $(i,j)$-entry and zeros in all other entries. Then a Weyl group element $w\in \Omega$ and the corresponding permutation $\pi\in \mathfrak{S}_{r}$ satisfy $w e_{i,j} w\i=e_{\pi\i(i),\pi\i(j)}$.
By \eqref{nws}, the entries $n_{i,j}$ of $n=(n_{i,j})\in N^{w\i}$ above the main diagonal are free if $\pi\i(i)>\pi\i(j)$, and zero otherwise. Let
\begin{equation}\label{invpiinv}
    Inv(\pi\i)  =  \{ (i,j)\mid i<j, \ \pi\i(i) > \pi\i(j) \}
\end{equation}
denote the set of indices of free entries in $n\in N^{w\i}$. If a matrix $X$ has entries $x_{i,j}$, then the entries of $wX$ are
    $(wX)_{i,j} = \sum_{k=1}^n w_{i,k}x_{k,j} = x_{\pi(i),j}$. Accordingly, we denote the entries of $wn$ as
\begin{equation}\label{wnentries}
(wn)_{i,j}=n_{\pi(i),j}, \quad \text{or equivalently,} \quad n_{i,j}=(wn)_{\pi\i(i),j}.
\end{equation}
By (\ref{invpiinv}), the entries $n_{i,j}$ of $wn$ are free if $(i,j)\in Inv(\pi\i)$. We call these entries the \emph{free variables of} $wn$. By (\ref{pidef}), $n_{i,j}=1$ if $i=j$. Finally, $n_{i,j}=0$ if $i>j$ or $\pi\i(i)<\pi\i(j)$. We denote the set of free variables as $V_w$, so
\begin{equation}\label{coords}
    V_w=\{n_{i,j}\,|\,(i,j)\in Inv(\pi\i)\}.
\end{equation}
For example, consider $w=\left(
\begin{smallarray}{cccc}
 0 & 1 & 0 & 0 \\
 0 & 0 & 0 & 1 \\
 0 & 0 & 1 & 0 \\
 1 & 0 & 0 & 0 \\
\end{smallarray}
\right)$ with corresponding permutation $\pi(1)=2$, $\pi(2)=4$, $\pi(3)=3$, and $\pi(4)=1$. We have
$Inv(\pi\i)=\left\{(1,2),(1,3),(1,4),(3,4)\right\}$, and the general form of matrices in the set $N^{w\i}$ and $wN^{w\i}$ are
\begin{equation} \label{ex0}
n=\left(
\begin{smallarray}{cccc}
 1 & n_{1,2} & n_{1,3} & n_{1,4} \\
 0 & 1 & 0 & 0 \\
 0 & 0 & 1 & n_{3,4} \\
 0 & 0 & 0 & 1 \\
\end{smallarray}
\right) \quad\text{and}\quad wn=\left(
\begin{smallarray}{cccc}
 0 & 1 & 0 & 0 \\
 0 & 0 & 0 & 1 \\
 0 & 0 & 1 & n_{3,4} \\
 1 & n_{1,2} & n_{1,3} & n_{1,4} \\
\end{smallarray}
\right),\end{equation}
respectively. We often use the notation $\a=(i,j)$ for $(i,j)\in Inv(\pi\i)$.
\subsection{Main result of this series} A matrix $g\in \GL(r)$ fails to have a $UDL$ decomposition if any of the lower right subblocks of $g$ has determinant $0$. For example, matrix $wn$ in \eqref{ex0} is not in $NB_{-}$ if $n_{1,4}=n_{1,3}n_{3,4}$ or $n_{1,2}=0$. This criterion becomes more and more complicated as $r$ increases. Instead, consider the following change of coordinates: $R(n_{1,2})=n_{1,2}n_{1,3}n_{1,4}$, $R(n_{1,3})=\frac{n_{1,3} n_{1,4}+n_{3,4} n_{1,4}}{n_{3,4}}$, $R(n_{1,4})=n_{1,4}n_{3,4}$, $R(n_{3,4})=n_{3,4}$. We write $u_{i,j}=R(n_{i,j})$ for $n_{i,j}\in V_w$. Under the map $R$, the matrix $wn$ transforms to
\[wu=\left(
\begin{smallarray}{cccc}
 0 & 1 & 0 & 0 \\
 0 & 0 & 0 & 1 \\
 0 & 0 & 1 & n_{3,4} \\
 1 & n_{1,2} n_{1,3} n_{1,4} & \frac{n_{1,3} n_{1,4}+n_{3,4} n_{1,4}}{n_{3,4}} & n_{1,4} n_{3,4} \\
\end{smallarray}
\right).\]
We have $wu=x\cdot b$,
where
\[x=\left(
\begin{smallarray}{cccc}
 1 & \frac{1}{n_{1,2}} &  0& 0 \\
  & 1 & \frac{1}{n_{3,4}}+\frac{1}{n_{1,3}} & \frac{1}{n_{1,4} n_{3,4}} \\
  &  & 1 & \frac{1}{n_{1,4}} \\
  &  &  & 1 \\
\end{smallarray}
\right),\quad b=\left(
\begin{smallarray}{cccc}
 -\frac{1}{n_{1,2} n_{1,3} n_{1,4}} &  & &  \\
\star & n_{1,2} &  &  \\
 \star  & \star & -\frac{n_{1,3}}{n_{3,4}} &  \\
 \star  & \star  & \star  & n_{1,4} n_{3,4} \\
\end{smallarray}
\right).\]
Observe that the superdiagonal entries of $x$ are a reciprocal sum of variables in $V_w$, and the diagonal entries of $b$ are monomials in $\Z[\{n_\a,n_\a\i\mid n_\a\in V_w\}]$. From this, we immediately see that $wu$ lies in $NB_-$ precisely when every $n_\a$ is nonzero. Our main theorem states that such a birational map $R=R_w$ exists for each $w\in \Omega$ of $\GL(r)$ with $r\geq 2$.
\begin{thm}\label{biratlthm}
For each $w\in\Omega$, there exists a birational map $R=R_w$
\[V_w \simeq F^{\, \abs{V_w}} \ \stackrel{R}{ \longrightarrow} \{u_{i,j}\,|\,n_{i,j}\in V_w\}\simeq F^{\,\abs{V_w}} \]
which satisfies the following properties:
\begin{enumerate}
\item[(i)] $R$ is smooth, of maximal rank, on $(F^\times)^{\abs{V_w}}$.
\item[(ii)] Let $wu$ denote the matrix obtained from $wn$ by replacing $n_\a\in V_w$ with $u_\a$. 
If each $n_{\alpha}\neq 0$, then $wu$ decomposes as $x\cdot b$, where $x
\in N$ and $b\in B_{-}$. The $i$-th superdiagonal entry $x_{i,i+1}$ of $x$ is given by
$x_{i,i+1}=\sum\limits_{n_{\alpha}\in B_w(i+1)} \frac{1}{n_{\alpha}}$, where $B_w(i)\subseteq V_w$ with $2\leq i\leq r$ partition $V_w$. Thus we have
$\sum\limits_{i=1}^{r-1} x_{i,i+1}=\sum\limits_{n_\alpha \in V_w} \frac{1}{n_\alpha}$.
\item[(iii)] The $i$-th diagonal entry $b_{i,i}$ of $b$ is given by
\begin{equation}\label{bdiag}
    b_{i,i}=(-1)^{i+\pi\i(i)}\cdot\frac{\prod\limits_{(a,i)\in Inv(\pi\i)}(-n_{a,i})}{\prod\limits_{(i,b)\in Inv(\pi\i)}n_{i,b}}.
\end{equation}
  \item[(iv)] The measure $\prod\limits_{(i,j)\in Inv(\pi\i)}du_{i,j}$
    transforms to $ \prod\limits_{(i,j)\in Inv(\pi\i)} \lvert n_{i,j}\rvert^{j-i-1}dn_{i,j}$.
  \item[(v)] There exists an ordering $\prec$ of the indices $\alpha \in Inv(\pi\i)$ such that for any pair of indices $\a$ and $\b$, $n_\a\f{\partial }{\partial n_\a} u_\b$ has the form \begin{equation}\label{ordform}
 \sum\limits_{i} c_i(\a)f_i, \quad \text{where}\quad c_i(\a)\in \Z\bigl[\{n_\d,n_\d\i\mid \d\prec\a\}\bigr] \quad\text{and}\quad f_i\in \Z\bigl[\{u_\g \mid n_\g \in V_w \} \bigr].
\end{equation}
\end{enumerate}
\end{thm}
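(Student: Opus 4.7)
The approach I would take is to construct $R_w$ explicitly as a change of coordinates that converts the nontrivial minor conditions for a $UDL$ decomposition of $wn$ into the simple conditions $n_\a \neq 0$. I would proceed by induction, either on the rank $r$ via block elimination of $wn$, or on the length $\ell(w)$ via a reduced expression of $w$ as a product of simple reflections. In either case, each inductive step peels off one free variable and reduces the problem to a Weyl group element with one fewer inversion.

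The cleanest construction proceeds by block $UDL$ elimination: pick a distinguished pivot row of $wn$ (for instance, the row $\pi\i(1)$ whose entry in the leftmost column equals $1$), use it to clear the first column, and thereby factor $wn$ as a product of a single-column upper unipotent factor, a diagonal pivot, and a matrix of the same form $w'n'$ for a smaller Weyl group element $w'$. The inductive hypothesis furnishes $R_{w'}$, and $R_w$ is defined by combining $R_{w'}$ with the monomial adjustment needed to turn the ``pivot-step'' superdiagonal entry into a single reciprocal $1/n_\a$ that aggregates with the inductive contribution so that each $x_{i,i+1}$ becomes the sum of reciprocals indexed by the set $B_w(i+1)$. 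The partition $\{B_w(i)\}$ is then read off from the recursion.

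With $R_w$ in hand, properties (ii) and (iii) follow directly from the elimination: the factorization $wu=xb$ drops out step by step, and the $i$-th diagonal pivot matches \eqref{bdiag}, with the sign $(-1)^{i+\pi\i(i)}$ arising from the row swap $\pi\i(i)\leftrightarrow i$ performed at step $i$. Property (i) follows from the rational-function form of $R_w$: it is a composition of monomial and reciprocal operations, hence smooth of maximal rank wherever no $n_\a$ vanishes. For (iv), the product form of $R_w$ makes the Jacobian a Laurent monomial in the $n_{i,j}$, and a combinatorial count of the $u_\g$'s that depend nontrivially on $n_{i,j}$ gives the exponent $j-i-1$.

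The main obstacle, in my view, is property (v): one must exhibit an ordering $\prec$ on $Inv(\pi\i)$ compatible with the recursion, so that every $u_\b$ is a rational function of $\{n_\d : \d \preceq \b\}$, and moreover so that the weighted derivative $n_\a\,\f{\partial u_\b}{\partial n_\a}$ separates into the Laurent-coefficient-times-$u$-polynomial form of \eqref{ordform}. The natural choice is the ordering induced by the inductive construction, with each peeled-off $\a$ placed after all the $\d$'s it depends on. Verifying \eqref{ordform} then reduces to an induction on $\b$, tracking how each $u_\b$ is assembled from $u_{\b'}$'s of the smaller problem together with one new pivot variable and applying the chain rule. This step, though conceptually straightforward, is where the bookkeeping is most delicate and where the combinatorial choice of $\prec$ must be made compatible with every other property already established.
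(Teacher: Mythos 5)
Your proposal and the paper share the same broad strategy---define a change of variables $R_w$, then prove (ii)--(iii) by induction on $r$ by peeling down to a Weyl element of smaller size and using the explicit $UDL$ minor formulas (Lemma~\ref{UDLlem})---but the mechanisms diverge in ways that matter. One preliminary point: this paper does not actually prove Theorem~\ref{biratlthm}; it defers the definition of $B_w(i)$, the ordering $\prec$, and the proofs of (ii)--(v) to part~II, and instead proves the combinatorial machinery (Proposition~\ref{thm:rowcolop}) that the induction in part~II will run on. So what can be compared here is your proposed mechanism against the one this paper sets up.

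The first concrete problem is your ``clear the first column'' pivot step. In $wn$, the first column is already an elementary vector: $(wn)_{i,1}=n_{\pi(i),1}=\delta_{i,\pi\i(1)}$, because $n\in N^{w\i}$ is upper unipotent, so $n_{k,1}=\delta_{k,1}$. There is nothing to eliminate, and no pivot factor is produced. The nontrivial minor conditions for $g\in NB_-$ are the lower-right minors in \eqref{superdiag}, so the correct inductive reduction must act on the lower-right corner, not on the first row or column. Accordingly, the paper's reduction passes from $w$ to the Weyl element $\wtilde$ of $\GL(r-1)$ obtained by deleting the \emph{rightmost} column and the $\pi\i(r)$-th row of $wn$ (Definition~\ref{def:wtildenu}); this step removes all free variables $n_{a,r}$ in the rightmost column at once, not one variable per step, so your ``each inductive step peels off one free variable'' is not how the recursion is structured.

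Second, the paper defines $R_w$ \emph{directly and non-recursively} as a ratio of nonintersecting-lattice-path polynomials (Definitions~\ref{def:calPn} and~\ref{def:Rformula}); the induction enters only in the \emph{proof}, through the decomposition $R=R_L+R_1+R_2$ (Definition~\ref{def:RLR1R2}, equation~\eqref{Rpartition}) and the identity relating $R^{(w)}_L$ to $R^{(\wtilde)}$ (Corollary~\ref{RLwRwtilde}). Your proposal instead tries to define $R_w$ recursively from $R_{w'}$ ``with a monomial adjustment,'' and claims each pivot step contributes a single reciprocal $1/n_\a$ to a superdiagonal entry. That conflicts with the target structure: in the theorem, $x_{i,i+1}$ is a sum $\sum_{n_\a\in B_w(i+1)}1/n_\a$ over a set $B_w(i+1)$ that generally has several elements, and the recursion from $w$ to $\wtilde$ removes many variables at once, so there is no one-to-one pairing of pivot steps with reciprocals. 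To make any inductive reduction work, one has to relate the lower-right minors of $wu$ to those of $\wtilde\utilde$; this is exactly the content of the row and column operations in Propositions~\ref{prop:rowop}--\ref{prop:colop}, which rely crucially on the $R_L/R_1/R_2$ splitting and the tail-swap lemma (Lemma~\ref{lem:firststep}). None of that infrastructure is present in your sketch, and it is not recoverable from a na\"{\i}ve block elimination.

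Finally, on (i), (iv), (v): your reading of (i) as ``composition of monomial and reciprocal operations'' is not accurate, since $R(n_\a)=P(n_\a)/\prod\rho(1_\mu)$ has a genuinely polynomial (multi-term) numerator $P(n_\a)$, not a monomial one; the Jacobian of $R_w$ is therefore not automatically a Laurent monomial, and establishing (iv) requires actual work. For (v) you correctly flag it as the hardest point, but the ``ordering induced by the inductive construction'' is underspecified, and since the paper's $R_w$ is not constructed recursively in the way you propose, your candidate order has no obvious counterpart here. In short, the proposal identifies the right goals but the wrong pivot column, defines $R_w$ by a recursion the paper does not use, and omits the combinatorial machinery ($R_L+R_1+R_2$, tail-swapping, row/column operations on $T_i,F_i$) that the paper develops precisely to carry the induction through.
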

The definition of $B_w(i)$ and the ordering $\prec$ will be provided in the second paper of this series. In their unpublished note \cite{miller2008unpublished}, Miller and Schmid obtained an explicit formula for $R_{w_l}$ that satisfies parts (i) through (v), where $w_l$ is the longest Weyl group element of $\GL(r)$ for any $r\geq 2$. Also, they provided an algorithm that computes $R_w$ for a given Weyl group element (see Appendix~\ref{sec:algorithm} for the algorithm) and conjectured that the formula obtained by the algorithm satisfies Theorem~\ref{biratlthm}. Using Mathematica, it is verified that their formula for $R_w$ for each Weyl group element $w$ of $\GL(r)$ with $2\leq r\leq 7$ satisfies parts (ii) and (iii). However, they were unable to prove general statements using the algorithmic approach. 
\subsection{Application: Whittaker distributions on $\GL(r,\R)$} Let $G=\GL(r,\R)$, and let $(\pi,V)$ be a principal series representation induced by the character $\chi$ on $B_{-}$ defined by $\chi(b)=\prod_{i=1}^{r} \abs{b_{i,i}}^{t_i}\sgn(b_{i,i})^{\varepsilon_i}$, where $t_i\in \C$ and $\varepsilon_i\in \{0,1\}$. Let $e(z)=e^{2\pi i z}$, and let $\psi(n)=e(n_{1,2}+\cdots+n_{r-1,r})$ be the standard Whittaker character on $N$. Whittaker distributions on $G$ are characterized by the transformation law $\tau(ngb)=\psi(n)\tau(g)\chi(b)$ where $n\in N$, $g\in G$, and $b\in B_{-}$. The transformation law determines the Whittaker distribution $\tau$ on the open Schubert cell $NB_{-}$ uniquely up to constant. \par
In \cite{miller2004distributions}, Miller and Schmid introduced the notion of \emph{distributions vanishing to infinite order}, and showed that if a distribution $\sigma$ on $M$ vanishes to infinite order along a submanifold $S$, then $\sigma$ is uniquely determined by its restriction to $M\smallsetminus S$ \cite[Lemma 2.8]{miller2004distributions}. A prototypical example is the function $e(1/x)$. As a distribution on $\R\smallsetminus \{0\}$, $e(1/x)$ vanishes to infinite order at $0$ (see \cite[Lemma 1.23]{kimWhit}), hence extends to a distribution on $\R$. Parts (ii) and (iii) of Theorem~\ref{biratlthm} implies that we have
\begin{equation}\label{tauwu}\tau(wu)=\pm e\Bigl(\sum_{\a\in Inv(\pi\i)}\frac{1}{n_\a}\Bigr)\prod_{\a\in Inv(\pi\i)} \abs{n_{\a}}^{s_\a}\sgn (n_\a)^{\eta_\a}\end{equation}
where $s_\alpha\in \C$, $\eta_\alpha\in \{0,1\}$, and the sign $\pm$ depends on the parameters for $\chi$ and $w$. Via a multivariable analogue of the argument for $e(1/x)$, which will be provided in detail in the second paper of this series, Theorem~\ref{biratlthm} implies that $\tau$ vanishes to infinite order along the complement of $NB_{-}$ in $G$, therefore admits a canonical extension to $G$. This proves the existence of Whittaker functional. This technique involves pairings of distributions and
integration by parts. This technique was used in \cite{miller2012archimedean} by Miller and Schmid to prove the analytic continuation of the exterior square $L$-functions, and also in \cite{kim2023infinitely} by the author to prove the infinitude of the critical zeros of $L$-functions with additive twists. See \cite{miller2013mathematics} for an overview of this method.
\subsection{Results of this paper}
The map $R$ is defined in a combinatorial manner. We regard the general matrix $wn\in wN^{w\i}$ as a directed graph, and define $R$ using nonintersecting path tuples within the graph. In this paper, we use combinatorial techniques to establish preparatory results necessary for proving Theorem~\ref{biratlthm}. Our analysis is closely related to the Lindstr\"{o}m-Gessel-Viennot lemma \cite{gessel1985binomial,lindstrom1973vector}, which computes the determinant of the weighted path matrix of an acyclic graph. Our main combinatorial lemma (Lemma~\ref{lem:firststep}) uses the ``tail-swapping" technique: if two paths intersect, swapping their tails from the vertex of intersection simplifies the description of the entanglement. This is also the key step in the proof of the Lindstr\"{o}m-Gessel-Viennot lemma. \par 
The following lemma provides explicit formulas for the $UDL$ decomposition $g=nhn_{-}$ of a general matrix $g$, in particular, the superdiagonal entries of $n$ and the entries of the $h$.
\begin{lem}[Corollary~4.3 of \cite{miller2012archimedean}]\label{UDLlem}
If $g =n h n_-$, with $n$, $h$, $n_-$ upper triangular unipotent,
diagonal, and lower triangular unipotent, respectively, then
    \begin{align}
&h_{i,i}  = 
\f{\det\bigl((g_{k,\ell})_{\,k,\ell\geq
i}\bigr)}{\det\bigl((g_{k,\ell})_{\,k,\ell>i}\bigr)} \quad \text{and} \quad n_{i,i+1} = \f{\det\Bigl((g_{k,\ell})_{\stackrel{\scriptstyle{k\geq i,\, k\neq
i+1}}{\ell>i }}\Bigr)}{\det\bigl((g_{k,\ell})_{k,\ell>i}\bigr)}\ . \label{superdiag}
    \end{align}
\end{lem}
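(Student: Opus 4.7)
The plan is to derive both formulas from a block decomposition of $g=nhn_{-}$. Partition all matrices into $2\times 2$ blocks of sizes $i\times i$ and $(r-i)\times(r-i)$, writing
\[
g = \begin{pmatrix} A & B \\ C & D \end{pmatrix},\quad n = \begin{pmatrix} n_1 & n_{12} \\ 0 & n_2 \end{pmatrix},\quad h = \begin{pmatrix} h_1 & 0 \\ 0 & h_2 \end{pmatrix},\quad n_- = \begin{pmatrix} m_1 & 0 \\ m_{21} & m_2 \end{pmatrix}.
\]
Multiplying the factored form yields $D = n_2 h_2 m_2$ and $B = n_{12} h_2 m_2$. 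Because $n_2$ and $m_2$ are unipotent, $\det(D) = \det(h_2) = \prod_{k>i} h_{k,k}$. Applying the same argument with block sizes $(i-1)\times(i-1)$ and $(r-i+1)\times(r-i+1)$ gives $\det\bigl((g_{k,\ell})_{k,\ell\geq i}\bigr) = \prod_{k\geq i} h_{k,k}$, and taking the ratio produces the stated formula for $h_{i,i}$.

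For $n_{i,i+1}$, combine the two block identities to obtain $n_{12} = B D\i n_2$. The entry $n_{i,i+1}$ is the bottom-left entry of the block $n_{12}$, so if $e_i$ and $e_1$ denote the appropriate standard basis vectors within each block,
\[
n_{i,i+1} = e_i^T n_{12}\, e_1 = e_i^T B D\i n_2 e_1 = e_i^T B D\i e_1,
\]
using that $n_2$ is upper triangular unipotent, so $n_2 e_1 = e_1$. This simplifies to
\[
n_{i,i+1} \,=\, \sum_{k} B_{i,k}\,(D\i)_{k,1}.
\]

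Applying Cramer's rule, $(D\i)_{k,1}\det(D) = (-1)^{k+1} M_{1,k}$, where $M_{1,k}$ is the minor of $D$ obtained by deleting its first row (originally row $i+1$ of $g$) and $k$-th column (originally column $i+k$). Substituting,
\[
n_{i,i+1}\,\det(D) = \sum_{k} (-1)^{k+1} g_{i,i+k}\, M_{1,k}.
\]
Now let $\widetilde N$ denote the $(r-i)\times(r-i)$ matrix with rows indexed by $\{i,i+2,\ldots,r\}$ and columns by $\{i+1,\ldots,r\}$, with entries inherited from $g$. The cofactor expansion of $\det(\widetilde N)$ along its first row is exactly the right-hand side above, since deleting the top row and $k$-th column of $\widetilde N$ recovers the same minor $M_{1,k}$ of $D$. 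Hence $n_{i,i+1}\det(D) = \det(\widetilde N)$, which is precisely the claimed formula. The main obstacle is purely bookkeeping: one must track the shift between block indices and original matrix indices and verify that the cofactor signs line up with a first-row expansion; the algebraic content reduces to the two block identities $D = n_2 h_2 m_2$ and $B = n_{12} h_2 m_2$ together with the unipotency of the outer factors.
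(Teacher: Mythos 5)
The paper does not prove this lemma; it is cited as Corollary~4.3 of \cite{miller2012archimedean} and used as a black box. So there is no in-paper proof to compare against, and the question is only whether your argument stands on its own.

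It does. The block multiplication correctly gives $D=n_2 h_2 m_2$ and $B=n_{12}h_2 m_2$ (one checks that the $(1,2)$- and $(2,2)$-blocks of $nhn_-$ are precisely these), and $\det(D)=\prod_{k>i}h_{k,k}$ follows since $n_2$ and $m_2$ are unipotent. Running the same computation with the split at $i-1$ yields $\det\bigl((g_{k,\ell})_{k,\ell\geq i}\bigr)=\prod_{k\geq i}h_{k,k}$, and the ratio gives the $h_{i,i}$ formula; this is fine (implicitly one needs $D$ invertible, which holds for $g\in\GL(r)$ since $\det D$ is a product of nonzero $h_{k,k}$'s). For the superdiagonal formula, $B D^{-1}=n_{12}n_2^{-1}$ gives $n_{12}=BD^{-1}n_2$, and since $n_2$ is upper unipotent, $n_2 e_1=e_1$, so $n_{i,i+1}=e_i^T B D^{-1}e_1$. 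Your Cramer/cofactor step is also correct: $(D^{-1})_{k,1}\det(D)=(-1)^{k+1}M_{1,k}$, and after substituting $B_{i,k}=g_{i,i+k}$, the resulting sum is exactly the first-row Laplace expansion of $\det\Bigl((g_{k,\ell})_{k\geq i,\ k\neq i+1,\ \ell>i}\Bigr)$ because deleting the top row of that matrix reproduces rows $2,\ldots,r-i$ of $D$, so its $(1,k)$-minors coincide with $M_{1,k}$. The sign $(-1)^{1+k}$ matches the first-row expansion. The proof is complete and tidy; the only thing worth stating explicitly, for completeness, is the hypothesis that $g$ (equivalently $h$) is invertible so that $D^{-1}$ exists.
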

This lemma reduces parts (ii) and (iii) to computing the determinants of certain lower right subblocks of $wu$. We translate the combinatorial properties of $R$ into a linear algebraic framework to prove the following, which is the main result of this paper.
\begin{prop}\label{thm:rowcolop}
Let $T_i$ and $F_i$ be the square submatrix of $wu$ of size $i$ defined by \[T_i=\left((wu)_{k.\ell}\right)_{k,\ell\geq r-i+1}, \quad F_i=\left((wu)_{k,\ell}\right)_{\stackrel{\scriptstyle{k\geq r-i,\, k\neq
r-i+1}}{\ell\geq r-i+1 }},\]
respectively. Let $T_i^L$ and $F_i^L$ be the matrices of size $(i-1)\times (i-1)$ defined in Definition~\ref{TLFLdef}. Assume that $1\leq i\leq r$ satisfy $r-i+1>\pi(r)$ so that the bottom row of $T_i$ and $F_i$ does not contain the entry of $1$ in the bottom row of $wu$, which is in the $\pi(r)$-th column.
    \begin{enumerate}
        \item[(i)] We have
$\det (T_i)=(-1)^{s+i} \cdot u_{\pi(r-i+s),r}\cdot \det( T_i^L)$, where $s$ is such that the $(r-i+s)$-th row of $wn$ coincides with the row of $T_i$ that contains the highest nonzero entry of the rightmost column of $T_i$.
\item[(ii)] We have
\[\det (F_i)=\begin{cases}
    \frac{1}{n_{\pi(r-i+1),r}}\det(T_i)-\frac{(-1)^{i}}{n_{\pi(r-i+1),r}}\cdot u_{\pi(r-i+2),r}\cdot \det (F^L_i) & \text{ if } r-i+1>\pi\i(r) \\
   (-1)^{i}\cdot u_{\pi(r-i+2),r}\cdot \det (F^L_i) & \text{ if } r-i+1=\pi\i(r) \\
   (-1)^{r-\pi\i(r)}\cdot \det (F^L_i) & \text{ if } r-i+1<\pi\i(r).
\end{cases}\]
    \end{enumerate}
\end{prop}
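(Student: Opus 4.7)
The plan is to prove both parts by cofactor expansion along the rightmost column of $T_i$ (resp.\ $F_i$), augmented by row operations so that in each case the expansion collapses to a single term. As a preliminary, I would first record the zero/nonzero pattern in the last column of $wu$: since $u_{a,b}=0$ unless $(a,b)\in Inv(\pi\i)$ or $a=b$, and $u_{a,a}=1$, column $\ell=r$ of $wu$ is nonzero exactly at rows $k\ge\pi\i(r)$, with value $1$ at row $\pi\i(r)$ and free variables $u_{\pi(k),r}$ below it. The standing assumption $r-i+1>\pi(r)$ keeps the bottom-row $1$ of $wu$, which is in column $\pi(r)$, outside both $T_i$ and $F_i$.

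For part (i), the topmost nonzero entry of the last column of $T_i$ occupies row $r-i+s$, where $s=\max\{1,\pi\i(r)-(r-i)\}$. For every row $k$ with $r-i+s<k\le r$ whose entry $u_{\pi(k),r}$ is nonzero, I would subtract $u_{\pi(k),r}/u_{\pi(r-i+s),r}$ times row $r-i+s$ to eliminate that entry, and then apply a single-term cofactor expansion to obtain $\det(T_i)=(-1)^{s+i}u_{\pi(r-i+s),r}\det(T_i^L)$. The minor produced by these row operations is the $(i-1)\times(i-1)$ matrix $T_i^L$ of Definition~\ref{TLFLdef}.

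For part (ii), the three cases correspond to the location of the column-$r$ value $1$ of $wu$ relative to the rows $r-i,r-i+2,\ldots,r$ of $F_i$. In case 3 ($r-i+1<\pi\i(r)$), the $1$ sits inside $F_i$ at position $\pi\i(r)-(r-i)$ preceded by zeros, so the same cofactor-plus-row-operation strategy as in part (i) immediately yields $(-1)^{r-\pi\i(r)}\det(F_i^L)$. In case 2 ($r-i+1=\pi\i(r)$), the $1$ lies in the row that $F_i$ skips, so the top row of $F_i$ is zero in column $r$ and the topmost nonzero is $u_{\pi(r-i+2),r}$ at $F_i$'s second row, giving $(-1)^i u_{\pi(r-i+2),r}\det(F_i^L)$ by the same mechanism.

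Case 1 of part (ii) ($r-i+1>\pi\i(r)$) is the delicate one and is where I expect the main obstacle: both the first row of $F_i$ (row $r-i$ of $wu$) and the first row of $T_i$ (row $r-i+1$ of $wu$) have nonzero column-$r$ entries. The plan is to scale the first row of $F_i$ by $n_{\pi(r-i+1),r}$ and then rewrite the scaled row as (row $r-i+1$) plus a correction vanishing in column $r$. This rewriting rests on the identity $n_{\pi(r-i+1),r}\,u_{\pi(r-i),r}=u_{\pi(r-i+1),r}$, which should be extracted from the path-tuple definition of the map $R$ and the properties established by Lemma~\ref{lem:firststep}. Multilinearity of the determinant in the first row then gives $n_{\pi(r-i+1),r}\det(F_i)=\det(T_i)+\det(F'')$, where $F''$ has a zero in the top-right corner; a further cofactor-plus-row-operation argument applied to $F''$ produces $(-1)^{i+1}u_{\pi(r-i+2),r}\det(F_i^L)$, and rearranging yields the stated formula. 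Verifying the above identity and matching the resulting minor with $F_i^L$ of Definition~\ref{TLFLdef} are the principal book-keeping tasks.
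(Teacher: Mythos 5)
Your outline correctly observes that the last column of $wu$ has a single $1$ at row $\pi\i(r)$ with free entries below it, and the identity $u_{\pi(k),r}=n_{\pi(k),r}\,u_{\pi(k-1),r}$ for $k>\pi\i(r)$ is correct (it follows from the definition of $R$ restricted to the last column, where $D(n_{a,r})=\{1_r\}$ and every path to $1_r$ is vertical). The plan of clearing the last column by row operations and doing a one-term cofactor expansion is also a sound starting point, and in part (ii) it reproduces the sign bookkeeping of the statement.

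However, there is a genuine gap that your proof sketch does not address and that is in fact the heart of the proposition: the claim that ``the minor produced by these row operations is the $(i-1)\times(i-1)$ matrix $T_i^L$'' is unjustified and, as stated, false. The entries of $T_i^L$ are \emph{not} the full values $u_{\pi(k),b}=R(n_{\pi(k),b})$ adjusted by some linear combination of last-column-clearing rows; by Definition~\ref{TLFLdef} and Definition~\ref{xLkdef}, they are the truncated path polynomials $R_L(n_{\pi(k),b})$, i.e.\ the piece of $R$ coming only from path sets that start by going left. Reaching this matrix from $T_i$ requires two distinct stages: (a) row operations of the very specific form $x_k\mapsto x_k - n_{\pi(k),r}\cdot x_{k-1}$, which, via Lemma~\ref{rowop1} and Corollary~\ref{rowopcor}, strip off the $R_2$ component of each entry and produce $T_i'$; and (b) column operations encoded by the functions $Q_j(l)$ of Proposition~\ref{Qjllem}, which strip off the remaining $R_1$ component and produce $T_i^L$. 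Your row operations use multiples of a single fixed row $x_{r-i+s}$ with coefficients $u_{\pi(k),r}/u_{\pi(r-i+s),r}$; while these clear the last column and therefore give the same $\det(T_i)$, the resulting minor is a different matrix from $T_i^L$, and proving its determinant equals $\det(T_i^L)$ is precisely the content of the proposition, not a consequence of the cofactor expansion.

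The same objection applies to part (ii): the minor you obtain after clearing the last column of $F_i$ is not $F_i^L$ for the same reason, and the column-operation step (Proposition~\ref{prop:colop}) is again indispensable. In short, your sketch handles the linear-algebra shell (sign, cofactor expansion, scalar factor) but misses the combinatorial core that makes $T_i^L$ and $F_i^L$ appear: the decomposition $R=R_L+R_1+R_2$ of the path polynomials, the removal of $R_2$ by row operations on adjacent rows, and the removal of $R_1$ by column operations. Without establishing that the minor you produce equals $T_i^L$ (equivalently, $F_i^L$), the argument does not prove the stated formula.
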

As we shall see in later sections, the matrices $T_i^L$ and $F_i^L$ contain the information of a Weyl group element of size $(r-1)$ rather than $r$. We will use this proposition to prove parts (ii) and (iii) by induction. \par 
In Appendix~\ref{sec:algorithm}, we outline two different algorithms that compute $R$. The first is the algorithm by Miller \cite{miller2008unpublished}, and the second follows the combinatorial definition of $R$ defined in this paper.
Using Mathematica, the author verified that the two algorithms agree for every Weyl group element of size up to $7$.
\section*{Acknowledgements}
This problem started from the unfinished work of Stephen D. Miller and Wilfried Schmid \cite{miller2008unpublished}. I thank Stephen D. Miller for suggesting the problem and for his valuable advice. I also thank Valentin Blomer for helpful suggestions. This work was partially supported by ERC Advanced Grant 101054336 and Germany’s 
Excellence Strategy grant EXC-2047/1 - 390685813.
\section{Formula for the birational map}\label{sec:formula}
\subsection{Notation}\label{sec:notations}
For a Weyl group element $w\in \Omega$ in $G=\GL(r)$, we let $wn$ be the general matrix of $wN^{w\i}$, where $n\in N^{w\i}$ is the unipotent upper triangular matrix with free variables $n_{a,b} \in V_w$, as defined in (\ref{nws}) and (\ref{coords}). For example, we have
\begin{equation} \label{ex1:coords}
w_1=\left(
\begin{smallarray}{cccc}
 0 & 0 & 0 & 1_4 \\
 0 & 1_2 & 0 & 0 \\
 0 & 0 & 1_3 & 0 \\
 1_1 & 0 & 0 & 0 \\
\end{smallarray}
\right) \quad \text{and} \quad w_1 n=\left(
\begin{smallarray}{cccc}
 0 & 0 & 0 & 1_4 \\
 0 & 1_2 & 0 & n_{2,4} \\
 0 & 0 & 1_3 & n_{3,4} \\
 1_1 & n_{1,2} & n_{1,3} & n_{1,4} \\
\end{smallarray}
\right) \leadsto \vcenter{\hbox{\begin{tikzpicture}[>=stealth, <-, every node/.style={ minimum size=5mm, inner sep=0}, scale=0.7, transform shape]
  \node (one4) at (3,3) {$1_4$};
  \node (n24) at (3,2.25) {$n_{2,4}$};
  \node (n34) at (3,1.5) {$n_{3,4}$};
  \node (n14) at (3,0.75) {$n_{1,4}$};
  \node (one2) at (1,2.25) {$1_2$};
  \node (one3) at (2,1.5) {$1_3$};
  \node (one1) at (0,0.75) {$1_1$};
  \node (n12) at (1,0.75) {$n_{1,2}$};
  \node (n13) at (2,0.75) {$n_{1,3}$};

  \path[<-] (one1) edge (n12);  
  \path[<-] (n12) edge (n13);  
  \path[<-] (n13) edge (n14);  
  \path[<-] (one3) edge (n34);  
  \path[<-] (one2) edge (n24);   

  \path[<-] (one2) edge (n12);  
  \path[<-] (one3) edge (n13);  
\path[<-] (n34) edge (n14); 
\path[<-] (n24) edge (n34); 
\path[<-] (one4) edge (n24); 

\end{tikzpicture}}}.\end{equation}
To distinguish the entries of $1$, we assign indices to them and write $1_1,1_2,\ldots 1_r$ from the left to the right. Note that the free variables appear in the entries that are below the entry of $1$ of its column, and to the right of the entry of $1$ of its row. We have $V_{w_1}=\{n_{1,2},\, n_{1,3},\, n_{1,4},\, n_{2,4},\, n_{3,4}\}$. Recall from (\ref{wnentries}) that $n_{a,b}$ is in the $\pi\i(a)$-th row and the $b$-th column of $wn$. Each $1_j$ satisfies $1_j=n_{j,j}$, so $1_j$ is in the $\pi\i(j)$-th row and the $j$-th column. \par
We define rectilinear paths in the matrix $wn$ as follows: we say that two nonzero entries of $wn$ are \textit{neighbors} if they are in the same row or the same column, and if there is no other nonzero entry between them. This way we can regard $wn$ as a directed graph, as illustrated above. For two nonzero entries $n_\a$ and $n_\b$ in $wn$, possibly $1$, a \textit{path} from $n_{\alpha}$ to $n_{\beta}$ is a sequence of nonzero neighboring entries from $n_\alpha$ to $n_{\beta}$ which moves only in upward and leftward steps as it passes through the nonzero entries of $wn$. We define the length of a path $p$ as the number of entries in $p$ minus $1$. \par 
We say that two paths $p_1$ and $p_2$ in $wn$ are \textit{disjoint} if $p_1$ and $p_2$ do not share any entries of $wn$, and that $p_1$ and $p_2$ \textit{intersect} they have one or more common elements. If $p_1$ and $p_2$ both contains $n_\a\in wn$, we say that $p_1$ and $p_2$ \textit{intersect} at $n_\a$. Extending the notion of disjointedness to sets of paths, we say that $\mathbf p=\{p_1, p_2, \ldots, p_m\}$ is a set of disjoint paths if $p_i$ and $p_j$ are disjoint whenever $i\neq j$. For two sets of disjoint paths $\mathbf p$ and $\mathbf q$, we say that $\mathbf p$ and $\mathbf q$ are disjoint if any two paths $p\in \mathbf p$ and $q\in \mathbf q$ are disjoint.
\begin{definition} Let $A$ and $B$ be sets of nonzero entries of $wn$ such that $\abs{A}=\abs{B}=m\geq 1$. We let ${\mathcal P}\left(A\to B\right)$ denote the collection of all possible sets of $m$ disjoint paths connecting elements of $A$ to elements of $B$.
\end{definition}
This means that each of the $m$ elements of $A$ is connected by a path to a unique element of $B$, in a way that no two paths have an entry of $wn$ in common. In the example $w_1n$ above, we have ${\mathcal P}\bigl(\{n_{1,4},n_{3,4}\}\to \{1_2,1_3\}\bigr)$ consists of two sets of disjoint paths,
\[\bigl\{\{(n_{1,4}, n_{1,3}, n_{1,2}, 1_2), (n_{3,4}, 1_3)\}, \{(n_{1,4}, n_{1,3}, 1_{3}), (n_{3,4}, n_{2,4},1_2)\}\bigr\}.\]
If there is an entry of $wn$ that is both in $A$ and $B$, then the entry itself forms a path of length $0$. An example of such a case can be found in \eqref{ex:w2}.
\begin{definition}\label{def:upath}
For any set of disjoint paths $\mathbf{p}$, we let $u(\mathbf{p})$ denote the product of all free variables that are traversed by any of the $m$ paths.
\end{definition} 
For the set $\mathbf{p}=\{(n_{1,4}, n_{1,3}, n_{1,2},1_2), (n_{3,4}, 1_3)\}$ in \eqref{ex1:coords}, we have $u(\mathbf{p})=n_{1,2}n_{1,3}n_{1,4}n_{3,4}$.
\begin{definition}\label{pathsumdef}
We let
$P(A\to B)=\sum\limits_{\mathbf{p}\in{\mathcal P}\left(A\to B\right)}u(\mathbf{p})$.
\end{definition}
\begin{definition}\label{def:gamma1}
For each $1_j$ in $wn$, we let $\gamma(1_j)$ denote the rightmost nonzero entry of the row containing $1_j$. For a set $I$ consisting of the entries $1_j$, we let $\gamma(I)=\{\gamma(1_i) \mid 1_i\in I\}$.
\end{definition}
\begin{definition}\label{rhodef}
For each $1_j$ in $wn$, we let $\rho(1_j)$ denote the product of nonzero entries in the row which contains $1_j$.
\end{definition}
In the example of $w_1n$ in (\ref{ex1:coords}), we have
$\gamma(1_1)=n_{1,4}$ and $\rho(1_1)=n_{1,2}n_{1,3}n_{1,4}$.
\subsection{Formula for birational map}\label{sec:Rformula}
For each $n_{a,b}\in V_w$, we assign ``origins" and ``destinations" corresponding to the variable. Recall that $1_{b}$ is the entry of $1$ above $n_{a,b}$. We call $1_{b}$ the ``top destination" of $n_{a,b}$.
\begin{definition}\label{mwna}
The matrix $M_w(n_{a,b})$ denotes the submatrix of $wn$ whose rows consist of the $\pi\i(b)$-th row through the $\pi\i(a)$-th row, and whose columns consist of the $b$-th column through the $r$-th column.
\end{definition}
Observe that the top left corner entry of $M_w(n_{a,b})$ is $1_b$ and the bottom left corner is $n_{a,b}$.
\begin{definition}\label{Ddef}
The set of ``destinations" of $n_{a,b}$, which we denote by $D(n_{a,b})$, is the set of entries of $1$ in the submatrix $M_w(n_{a,b})$.
\end{definition} 
\begin{definition}\label{Odef}
The set of ``origins" of $n_{a,b}$, which we denote by $O(n_{a,b})$, is the set consisting of the rightmost nonzero entry of the bottom row of $M_w(n_{a,b})$, and the rightmost nonzero entries of the rows of $M_w(n_{a,b})$ that contain a destination of $n_{a,b}$ other than the top destination $1_b$.
\end{definition}
We call the origin in the bottom row of $M_w(n_{a,b})$ the ``bottom origin". For each row that contains a destination of $n_{a,b}$ other than the top destination, there is also an origin of $n_{a,b}$ which is not the bottom origin. Thus $D(n_{a,b})$ and $O(n_{a,b})$ are nonempty and have the same size, hence the collection ${\mathcal P}\bigl(O(n_\alpha)\to D(n_\alpha)\bigr)$ and the polynomial $P\bigl(O(n_\alpha)\to D(n_\alpha)\bigr)$ are well-defined. 
\begin{definition}\label{def:calPn}
We let ${\mathcal P}(n_\alpha)={\mathcal P}\bigl(O(n_\alpha)\to D(n_\alpha)\bigr)$
and $P(n_\alpha)=P\bigl(O(n_\alpha)\to D(n_\alpha)\bigr)$.
\end{definition}
The formula for the birational map $R$ is given as follows.
\begin{definition}\label{def:Rformula}
    For $n_\a\in V_w$, we let
    \begin{equation}\label{defR}
R(n_\a)=(-1)^{t_\a}\frac{P(n_\a)}{\prod\limits_{1_\mu\in D(n_\alpha)}\rho(1_\mu)},
\quad \text{where}\quad t_\a=\abs{O(n_\alpha)}-1=\abs{D(n_\alpha)}-1.\end{equation}
\end{definition}
We shall use the notations $R(n_\a)$ and $u_\a$ interchangeably. We let $wu$ denote the matrix obtained from $wn$ by replacing $n_\a\in V_w$ with $u_\a$, and denote 
$u_{a,b}=(wu)_{\pi\i(a),b}$. Thus $u_{a,b}=n_{a,b}$ if $n_{a,b}$ is either $0$ or $1$, and $u_{a,b}=R(n_{a,b})$ if $n_{a,b}\in V_w$. \par
For example, consider
\begin{equation}\label{ex:w2}
w_2n=\left(
\begin{smallarray}{cccccc}
\cline{4-6}
 0 & 0 & 0 & \multicolumn{1}{|c}{1_4} & 0 & \multicolumn{1}{c|}{0 }\\
 0 & 0 & 0 & \multicolumn{1}{|c}{0} & 0 & \multicolumn{1}{c|}{1_6} \\
 0 & 1_2 & 0 & \multicolumn{1}{|c}{n_{2,4}} & 0 & \multicolumn{1}{c|}{n_{2,6} }\\
 0 & 0 & 0 & \multicolumn{1}{|c}{0} & 1_5 & \multicolumn{1}{c|}{n_{5,6}} \\
 1_1 & n_{1,2} & 0 & \multicolumn{1}{|c}{n_{1,4}} & n_{1,5} & \multicolumn{1}{c|}{n_{1,6}} \\
 0 & 0 & 1_3 & \multicolumn{1}{|c}{n_{3,4}} & n_{3,5} & \multicolumn{1}{c|}{n_{3,6}} \\
 \cline{4-6}
\end{smallarray}
\right)
\end{equation}
This is the general form of a matrix in $w_2N^{w_2\i}$, where $w_2$ is the permutation matrix obtained by replacing the entries $n_\a$ by $0$. Consider the variable $n_{3,4}$. The box represents the submatrix $M_{w_2}(n_{3,4})$. We have $O(n_{3,4})=\{n_{3,6},n_{5,6},1_6\}$ and $D(n_{3,4})=\{1_4,1_5,1_6\}$. Note that an entry of $1$ in $wn$ that does not have any free variables in its row may be regarded as both an origin and a destination of a free variable, as the entry $1_6$ in this example. In such a case, the entry of $1$ itself forms a path of length $0$. The change of variable for $n_{3,4}$ consists of $5$ sets of three disjoint paths, represented by
\begin{equation}\label{w2n34}\mathbf{p}_1=\begin{smallarray}{ccc}
\cline{1-3}
\multicolumn{1}{|c|}{ \color{orange}1_4} & 0 & \multicolumn{1}{c|}{0}   \\
\cline{3-3}
 \multicolumn{1}{|c|}{\color{orange}0} & 0 &\multicolumn{1}{|c|} {\color{magenta}1_6} \\
 \cline{3-3}
\multicolumn{1}{|c|} {\color{orange}n_{2,4}} & 0 & \multicolumn{1}{c|}{n_{2,6}}   \\
\cline{2-3}
\multicolumn{1}{|c|}{\color{orange}0} & \multicolumn{1}{c}{\color{blue}1_5} & \multicolumn{1}{c|}{\color{blue}n_{5,6}}    \\
\cline{2-3}
\multicolumn{1}{|c|} {\color{orange}n_{1,4}} & n_{1,5} &\multicolumn{1}{c|} {n_{1,6}}   \\
\cline{2-3}
\multicolumn{1}{|c}  {\color{orange}n_{3,4}} & \color{orange}n_{3,5} & \multicolumn{1}{c|}{\color{orange}n_{3,6}}   \\
\cline{1-3}
\end{smallarray}, \, \mathbf{p}_2=\begin{smallarray}{ccc}
\cline{1-3}
\multicolumn{1}{|c|}{\color{orange} 1_4} & 0 & \multicolumn{1}{c|}{0}   \\
\cline{3-3}
 \multicolumn{1}{|c|}{\color{orange}0} & 0 &\multicolumn{1}{|c|} {\color{magenta}1_6} \\
 \cline{3-3}
\multicolumn{1}{|c|} {\color{orange}n_{2,4}} & 0 & \multicolumn{1}{c|}{n_{2,6}}   \\
\cline{2-3}
\multicolumn{1}{|c|}{\color{orange}0} & \multicolumn{1}{c}{\color{blue}1_5} & \multicolumn{1}{c|}{\color{blue}n_{5,6}}    \\
\cline{2-3}
\multicolumn{1}{|c} {\color{orange}n_{1,4}} & \multicolumn{1}{c|}{\color{orange}n_{1,5}} &\multicolumn{1}{c|} {n_{1,6}}   \\
\cline{1-1} \cline{3-3}
\multicolumn{1}{|c}  {n_{3,4}} & \multicolumn{1}{|c}{\color{orange}n_{3,5}} & \multicolumn{1}{c|}{\color{orange}n_{3,6}}   \\
\cline{1-3}
\end{smallarray}, \, \mathbf{p}_3=\begin{smallarray}{ccc}
\cline{1-3}
\multicolumn{1}{|c|}{ \color{orange}1_4} & 0 & \multicolumn{1}{c|}{0}   \\
\cline{3-3}
 \multicolumn{1}{|c|}{\color{orange}0} & 0 &\multicolumn{1}{|c|} {\color{magenta}1_6} \\
 \cline{3-3}
\multicolumn{1}{|c|} {\color{orange}n_{2,4}} & 0 & \multicolumn{1}{c|}{n_{2,6}}   \\
\cline{2-3}
\multicolumn{1}{|c|}{\color{orange}0} & \multicolumn{1}{c}{\color{blue}1_5} & \multicolumn{1}{c|}{\color{blue}n_{5,6}}    \\
\cline{2-3}
\multicolumn{1}{|c} {\color{orange}n_{1,4}} & \multicolumn{1}{c}{\color{orange}n_{1,5}} &\multicolumn{1}{c|} {\color{orange}n_{1,6}}   \\
\cline{1-2}
\multicolumn{1}{|c}  {n_{3,4}} & \multicolumn{1}{c|}{n_{3,5}} & \multicolumn{1}{c|}{\color{orange}n_{3,6}}   \\
\cline{1-3}
\end{smallarray},\, \mathbf{p}_4=\begin{smallarray}{ccc}
\cline{1-3}
\multicolumn{1}{|c|}{ \color{blue}1_4} & 0 & \multicolumn{1}{c|}{0}   \\
\cline{3-3}
 \multicolumn{1}{|c|}{\color{blue}0} & 0 &\multicolumn{1}{|c|} {\color{magenta}1_6} \\
 \cline{2-3}
\multicolumn{1}{|c} {\color{blue}n_{2,4}} & \color{blue}0 & \multicolumn{1}{c|}{\color{blue}n_{2,6}}   \\
\cline{1-2}
\multicolumn{1}{|c|}{0} & \color{orange}1_5 & \multicolumn{1}{|c|}{\color{blue}n_{5,6}}    \\
\cline{3-3}
\multicolumn{1}{|c|} {n_{1,4}} & \color{orange}n_{1,5} &\multicolumn{1}{c|} {\color{orange}n_{1,6}}   \\
\cline{2-2}
\multicolumn{1}{|c}  {n_{3,4}} & n_{3,5} & \multicolumn{1}{|c|}{\color{orange}n_{3,6}}   \\
\cline{1-3}
\end{smallarray}, \,  \mathbf{p}_5=\begin{smallarray}{ccc}
\cline{1-3}
\multicolumn{1}{|c|}{ \color{blue}1_4} & 0 & \multicolumn{1}{c|}{0}   \\
\cline{3-3}
 \multicolumn{1}{|c|}{\color{blue}0} & 0 &\multicolumn{1}{|c|} {\color{magenta}1_6} \\
 \cline{2-3}
\multicolumn{1}{|c} {\color{blue}n_{2,4}} &\color{blue} 0 & \multicolumn{1}{c|}{\color{blue}n_{2,6}}   \\
\cline{1-2}
\multicolumn{1}{|c|}{0} & \color{orange}1_5 & \multicolumn{1}{|c|}{\color{blue}n_{5,6}}    \\
\cline{3-3}
\multicolumn{1}{|c|} {n_{1,4}} & \color{orange}n_{1,5} &\multicolumn{1}{|c|} {n_{1,6}}   \\
\cline{3-3}
\multicolumn{1}{|c}  {n_{3,4}} & \multicolumn{1}{|c}{\color{orange}n_{3,5}} & \multicolumn{1}{c|}{\color{orange}n_{3,6}}   \\
\cline{1-3}
\end{smallarray}.\end{equation}
It is straightforward to check that the change of coordinate \eqref{defR} for $n_{3,4}$ gives
\[\begin{aligned}
    u_{3,4}&=n_{1,4} n_{2,4} n_{3,4} n_{3,5} n_{3,6}+n_{1,4} n_{1,5} n_{2,4} n_{3,5} n_{3,6}+n_{1,4} n_{1,5} n_{1,6} n_{2,4} n_{3,6}+
    n_{1,5} n_{1,6} n_{2,4} n_{2,6} n_{3,6} \\ &\quad +n_{1,5} n_{2,4} n_{2,6} n_{3,5} n_{3,6}.
\end{aligned}
\]

\section{More on paths and matrices}\label{sec:notations1}
Throughout the section, we fix a Weyl group element $w\in \Omega$ of $\GL(r)$. In this section, we describe a partition of $\mathcal{P}(n_\a)$ into three subcollections and explain its use in later analysis.
\subsection{Subsets of $O(n_\a)$ and $D(n_\a)$}
Recall Definitions~\ref{Ddef} and~\ref{Odef} of the sets of destinations and origins of $n_\a$. We first establish notations for subsets of $O(n_\a)$ and $D(n_\a)$.
\begin{definition}\label{O1}
We let $O_1(n_\alpha)\subseteq O(n_\a)$ denote the subset consisting of the origins which are (i) in the rightmost column of $wn$, (ii) above the bottom origin, and (iii) right below a free variable which is not an origin of $n_\a$. That is,
\[O_1(n_{a,b})=\{n_{\pi(t),r}\in O(n_{a,b})\mid t<\pi\i(a),\  n_{\pi(t-1),r}\in V_w\smallsetminus O(n_{a,b})\}.\]
\end{definition}
Observe that every element of $O_1(n_{a,b})$ is a free variable because they are right below a free variable.
\begin{definition}\label{D1}
We let $D_1(n_\a)\subseteq D(n_\a)$ denote the subset consisting of the entries of $1$ in the rows of $wn$ containing an element of $O_1(n_\a)$.
\end{definition}
\begin{definition}\label{ODup}
For a positive integer $k\leq r$, we let $O_{\uparrow,k}(n_\a)$ and $D_{\uparrow,k}(n_\a)$ denote the set of origins and destinations of $n_\a$ which are in or above the $k$-th row of $wn$, respectively. Similarly, we let $O_{\downarrow,k}(n_\a)$ and $D_{\downarrow,k}(n_\a)$ denote the set of origins and destinations of $n_\a$ which are in or below the $k$-th row of $wn$, respectively.
\end{definition}
\begin{definition}\label{Dne}
For a positive integer $k\leq r$, let $D_{\nearrow,k}(n_\a)$ be the set of the destinations of $n_\a$ which are either equal to $1_k$, or above and to the right of $1_k$.
\end{definition}
See section 4.2 of \cite{kimWhit} for an example illustrating these subsets.

\subsection{Partition of $\mathcal{P}(n_\a)$} We partition the collection $\mathcal{P}(n_\a)$ of sets of disjoint paths according to the initial direction of the paths in each set.

\begin{definition}\label{calpleft}
We let ${\mathcal P}_L\left(A\to B\right)\subseteq {\mathcal P}\left(A\to B\right)$ denote the subcollection consisting of the sets in which every path with length at least $1$ whose origin is in the rightmost column of $wn$ starts by going left. We let
$P_L(A\to B)=\sum_{\mathbf{p}\in{\mathcal P}_L\left(A\to B\right)}u(\mathbf{p})$.
\end{definition}
\begin{definition}\label{def:calpleftn}
We let ${\mathcal P}_L(n_\a)={\mathcal P}_L\bigl(O(n_\a)\to D(n_\a)\bigr)$ and $P_L(n_\a)=P_L\bigl(O(n_\a)\to D(n_\a)\bigr)$.
\end{definition}
Since the entries in the rightmost column of $wn$ are zero if they are above $1_r$ and free variables if they are below $1_r$, the elements of $O(n_\a)$ are in the rightmost column if and only if they are below the $\pi\i(r)$-th row. If all of the origins of $n_\a$ are above the $\pi\i(r)$-th row, then ${\mathcal P}(n_\a)={\mathcal P}_L(n_\a)$. Write ${\mathcal P}'(n_\a)={\mathcal P}(n_\a)\smallsetminus {\mathcal P}_L(n_\a)$. That is, ${\mathcal P}'(n_\a)$ is the subcollection of ${\mathcal P}(n_\a)$ consisting of the sets that contain a path of length at least $1$ that starts by going up, from an origin in the rightmost column of $wn$.
\begin{definition}\label{P1def}
Let ${\mathcal P}_1(n_\a)$ denote the subcollection of ${\mathcal P}'(n_\a)$ consisting of sets in which the path from the \textbf{bottom} origin starts by going left. Let $P_1(n_\a)=\sum_{\mathbf{p}\in{\mathcal P}_1(n_\a)}u(\mathbf{p})$.
\end{definition}
\begin{definition}\label{P2def}
Let ${\mathcal P}_2(n_\a)$ denote the subcollection of ${\mathcal P}'(n_\a)$ consisting of sets in which the path from the \textbf{bottom} origin starts by going up. Let $P_2(n_\a)=\sum_{\mathbf{p}\in{\mathcal P}_2(n_\a)}u(\mathbf{p})$.
\end{definition}
It is clear that ${\mathcal P}'(n_\a)={\mathcal P}_1(n_\a)\sqcup{\mathcal P}_2(n_\a)$. If a set $\mathbf{p}$ is in ${\mathcal P}_1(n_\a)$, then by Definition~\ref{P1def}, the path in $\mathbf{p}$ starting from the bottom origin starts by going left, but there is another path $p\in \mathbf{p}$ that starts by going up, whose origin is in the rightmost column and above the bottom origin. Hence $\mathbf{p}$ necessarily has at least two paths of length at least $1$. Also, since $p$ starts by going up, the second entry of $p$ is right above its origin, and it is not an origin of $n_\a$ due to the disjointedness. Hence the origin of $p$ is right below a free variable which is not an origin of $n_\a$, therefore it is in the set $O_1(n_\a)$. \par
We have $P(n_\a)=P_L(n_\a)+P_1(n_\a)+P_2(n_\a)$. If ${\mathcal P}'(n_\a)$ is empty, then $P(n_\a)=P_L(n_\a)$. In the example \eqref{w2n34}, we have $\mathbf{p}_1,\mathbf{p}_2\in \mathcal{P}_L(n_{3,4})$, $\mathbf{p}_5\in \mathcal{P}_1(n_{3,4})$, and $\mathbf{p}_3,\mathbf{p}_4\in \mathcal{P}_2(n_{3,4})$.
\begin{definition}\label{def:RLR1R2}
For a subscript $\bullet \in \{L,1,2\}$, we let
\[
  R_{\bullet}(n_\a)=(-1)^{t_\a} \frac{P_{\bullet}(n_\a)}{\prod\limits_{1_\mu\in D(n_\alpha)}\rho(1_\mu)}, \quad t_\a=\abs{D(n_\alpha)}-1.
\]
\end{definition}
Using this notation, we can write $R(n_\a)$ defined in (\ref{defR}) as the sum
\begin{equation}\label{Rpartition}
R(n_\a)=R_L(n_\a)+R_1(n_\a)+R_2(n_\a). 
\end{equation}
Among the three terms, $R_L(n_\a)$ admits a simple formula in terms of a free variable of a Weyl group element $\wtilde$ of $\GL(r-1)$. We explain this in the next section.
\subsection{From $w$ to $\wtilde$} Recall that $1_r$, the rightmost entry of $1$ in $w$, is in the $\pi\i(r)$-th row. Let $\widetilde{w}$ be the Weyl group element of size $(r-1)$ obtained from $w$ by removing the rightmost column and the $\pi\i(r)$-th row. To make a distinction between $w$ and $\wtilde$, we use superscripts $(w)$ and $(\wtilde)$ to previously defined notions. For example, we write $R(n_\a)=R^{(w)}(n_\a)$ if $n_\a$ is an element of $V_w$.
\begin{definition}\label{def:wtildenu}
We let $\wtilde\ntilde$ be the general matrix of $\wtilde \bar{N}^{\wtilde\i}$, where $\bar{N}\subset \GL(r-1)$ is the group of unipotent upper triangular matrices of size $r-1$, and $\ntilde\in \bar{N}^{\wtilde\i}$ is the unipotent upper triangular matrix with free variables $n_{\atilde,\btilde} \in V_{\wtilde}$, as defined in (\ref{nws}) and (\ref{coords}). We let $\wtilde\utilde$ be the matrix obtained from $\wtilde \ntilde$ by replacing the free variables $n_{\atilde,\btilde}$ in $\wtilde \ntilde$ with $u_{\atilde,\btilde}=R^{(\wtilde)}(n_{\atilde,\btilde})$.
\end{definition}
Consider again the example \eqref{ex:w2}. Removing the second row and the rightmost column from $w_2n$ gives
\[
\left(
\begin{smallarray}{ccccc}
 0 & 0 & 0 & 1_4 & 0 \\
 0 & 1_2 & 0 & n_{2,4} & 0 \\
 0 & 0 & 0 & 0 & 1_5 \\
 1_1 & n_{1,2} & 0 & n_{1,4} & n_{1,5} \\
 0 & 0 & 1_3 & n_{3,4} & n_{3,5} \\
\end{smallarray}
\right).
\]
This is precisely the general matrix $\widetilde{w}_2\ntilde$, hence the set $V_{\widetilde{w}_2}$ consists of the elements of $V_{w_2}$ that are not in the rightmost column of $w_2 n$. Consider $n_{3,4}$ as we did before in \eqref{ex:w2}, but this time as a free variable in $\wtilde_2\ntilde$. We have $D^{(\wtilde_2)}(n_{3,4})=\{1_4,1_5\}$ and $O^{(\wtilde_2)}(n_{3,4})=\{n_{3,5},1_5\}$. Then the collection $\mathcal{P}^{(\wtilde_2)}(n_{3,4})$ consists of two sets of disjoint paths, represented by
\[\widetilde{\mathbf{p}}_1=\begin{smallarray}{cc}
\cline{1-2}
\multicolumn{1}{|c|}{ \color{orange}1_4} & \multicolumn{1}{c|}{0}   \\
\multicolumn{1}{|c|} {\color{orange}n_{2,4}} & \multicolumn{1}{c|}{0}    \\
\cline{2-2}
\multicolumn{1}{|c|}{\color{orange}0} & \multicolumn{1}{c|}{\color{blue}1_5}     \\
\cline{2-2}
\multicolumn{1}{|c|} {\color{orange}n_{1,4}} & \multicolumn{1}{c|}{n_{1,5}}  \\
\cline{2-2}
\multicolumn{1}{|c}  {\color{orange}n_{3,4}} & \multicolumn{1}{c|}{\color{orange}n_{3,5} }  \\
\cline{1-2}
\end{smallarray} \eqand \widetilde{\mathbf{p}}_2=\begin{smallarray}{cc}
\cline{1-2}
\multicolumn{1}{|c|}{\color{orange} 1_4} & \multicolumn{1}{c|}{0}   \\
\multicolumn{1}{|c|} {\color{orange}n_{2,4}} & \multicolumn{1}{c|}{0}   \\
\cline{2-2}
\multicolumn{1}{|c|}{\color{orange}0} & \multicolumn{1}{c|}{\color{blue}1_5}    \\
\cline{2-2}
\multicolumn{1}{|c} {\color{orange}n_{1,4}} & \multicolumn{1}{c|}{\color{orange}n_{1,5}}    \\
\cline{1-1}
\multicolumn{1}{|c}  {n_{3,4}} & \multicolumn{1}{|c|}{\color{orange}n_{3,5}}  \\
\cline{1-2}
\end{smallarray}.\]
Observe that removing the origins from the paths in $\mathbf{p}_1$ and $\mathbf{p}_2$ in \eqref{w2n34} gives $\widetilde{\mathbf{p}}_1$ and $\widetilde{\mathbf{p}}_2$, respectively. Recall from the remark below Definition~\ref{P2def} that, as an entry of $w_2n$, we have $\mathcal{P}^{(w_2)}_L(n_{3,4})=\{\mathbf{p}_1,\mathbf{p}_2\}$. It follows that $P^{(w_2)}_L(n_{3,4})=n_{3,6}n_{5,6}P^{(\wtilde_2)}(n_{3,4})$. We record the results in the following lemma.
\begin{lem}\label{lem:PLwPwtilde}
The set of free variables in $V_w$ which are not in the rightmost column is in bijection with the set $V_{\wtilde}$, that is, $V_{\wtilde}=\{n_{a,b}\in V_w\mid b<r\}$. If $n_{a,b}\in V_w$ is not in the rightmost column of $wn$, then the sets in ${\mathcal P}^{(w)}_L(n_{a,b})$ are in one-to-one correspondence with the sets in ${\mathcal P}^{(\wtilde)}(n_{a,b})$. Also, we have 
\begin{equation}\label{PLwPwtilde}
    P_L^{(w)}(n_{a,b})=\Bigl(\prod\limits_{n_{\b,r}\in O^{(w)}_{\downarrow, \pi^{-1}(r)+1}(n_{a,b})} n_{\b,r}\Bigr)\cdot P^{(\widetilde{w})}(n_{a,b})
\end{equation}
for all such $n_{a,b}\in V_w$.
\end{lem}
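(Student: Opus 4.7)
The plan is to first verify the set equality $V_{\wtilde}=\{n_{a,b}\in V_w\mid b<r\}$ by direct inspection of the deleted row and column, then to build an explicit bijection $\mathcal{P}_L^{(w)}(n_{a,b})\to\mathcal{P}^{(\wtilde)}(n_{a,b})$ by truncating the portions of paths that live in the deleted column, and finally to read off the weight identity \eqref{PLwPwtilde} from the bijection. The set equality is immediate: since $(wn)_{\pi\i(r),j}=n_{r,j}$ vanishes for $j<r$ by upper-triangular unipotency, the $\pi\i(r)$-th row of $wn$ contains only the entry $1_r$ in column $r$, so deleting it removes no free variables; deleting the rightmost column removes exactly the $n_{a,r}\in V_w$. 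Hence $V_{\wtilde}=\{n_{a,b}\in V_w\mid b<r\}$.

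For the bijection, the structural observation is that column $r$ is zero above $1_r$ and row $\pi\i(r)$ is zero to the left of $1_r$; combined with the up-and-left step rule, this means a rectilinear path can touch row $\pi\i(r)$ only at $1_r$ itself and cannot pass through $1_r$ without terminating there. Consequently, any path of $\mathbf{p}\in\mathcal{P}_L^{(w)}(n_{a,b})$ terminating at the destination $1_r$ (when $1_r\in D^{(w)}(n_{a,b})$) must be the length-zero path $1_r\to 1_r$, and the origins of $n_{a,b}$ in the rightmost column consist of $1_r$ itself (if present) together with entries $n_{\gamma,r}$ in rows strictly below $\pi\i(r)$. By the defining condition of $\mathcal{P}_L$, each positive-length path starting at such an $n_{\gamma,r}$ begins by stepping into its left-neighbor, which is precisely the rightmost nonzero entry of row $\pi\i(\gamma)$ in $M_{\wtilde}(n_{a,b})$, i.e., the corresponding $\wtilde$-origin. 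I would therefore define $\mathbf{p}\mapsto\widetilde{\mathbf{p}}$ by deleting the $1_r\to 1_r$ path when present and truncating the initial vertex $n_{\gamma,r}$ of every path whose origin lies in the rightmost column. Subsequent up/left steps cannot re-enter column $r$ or the zero row $\pi\i(r)$, so the truncated paths lie inside $M_{\wtilde}(n_{a,b})$; paths whose origins lie in rows above $\pi\i(r)$ are unchanged, and since column $r$ is zero in those rows their starting entries already coincide with the corresponding $\wtilde$-origins. Matching of destinations is immediate from $D^{(\wtilde)}(n_{a,b})=D^{(w)}(n_{a,b})\smallsetminus\{1_r\}$, and disjointness is preserved by construction. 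The inverse map prepends the appropriate $n_{\gamma,r}$ to each path of $\widetilde{\mathbf{p}}$ whose origin sits in a row of $M_w$ below $\pi\i(r)$ and reinstates the $1_r\to 1_r$ path when $1_r\in M_w(n_{a,b})$; every prepended path begins with a leftward step, so the image lies in $\mathcal{P}_L^{(w)}$.

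For the weight identity, the free variables traversed in $\mathbf{p}$ but not in $\widetilde{\mathbf{p}}$ are exactly the deleted initial vertices $n_{\gamma,r}$, one per origin of $n_{a,b}$ strictly below row $\pi\i(r)$, i.e., one per element of $O^{(w)}_{\downarrow,\pi\i(r)+1}(n_{a,b})$. Summing $u(\mathbf{p})=u(\widetilde{\mathbf{p}})\prod_{n_{\beta,r}\in O^{(w)}_{\downarrow,\pi\i(r)+1}(n_{a,b})}n_{\beta,r}$ over the bijection yields \eqref{PLwPwtilde}. The main obstacle I expect is the bookkeeping needed to confirm that the rows contributing origins to $O^{(\wtilde)}(n_{a,b})$ match, under the bijection, with the rows whose $w$-origins are truncated: this reduces to checking that destination status for each row (and hence origin status for non-top, non-bottom rows) is unchanged in passing from $M_w$ to $M_{\wtilde}$, together with the small case analysis distinguishing whether the bottom row of $M_w(n_{a,b})$ lies above or below row $\pi\i(r)$ and whether $1_r\in M_w(n_{a,b})$.
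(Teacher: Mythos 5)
Your proposal is correct and follows the same route as the paper: the set equality by inspection of the deleted row and column, the bijection $\mathcal{P}_L^{(w)}(n_{a,b})\leftrightarrow\mathcal{P}^{(\wtilde)}(n_{a,b})$ by truncating/prepending initial vertices in the rightmost column (and dropping/reinstating the length-zero path at $1_r$), and the weight identity by tracking the deleted variables, which are exactly those in $O^{(w)}_{\downarrow,\pi\i(r)+1}(n_{a,b})$. You have in fact supplied more justification than the paper — e.g., that column $r$ is zero above $1_r$ and row $\pi\i(r)$ is zero left of $1_r$, so paths cannot re-enter the deleted locus — and the ``bookkeeping obstacle'' you flag (verifying that origin/destination status per row is preserved in passing from $M_w$ to $M_{\wtilde}$) is exactly the routine check the paper elides.
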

\begin{proof} The statement on $V_{\wtilde}$ is straightforward. For \eqref{PLwPwtilde}, observe that there is an obvious one-to-one correspondence between $\mathcal{P}^{(\wtilde)}(n_\a)$ and $\mathcal{P}_L^{(w)}(n_\a)$. By construction, every set of disjoint paths in $\mathcal{P}_L^{(w)}(n_\a)$ contains the origins of $n_\a$ in the rightmost column but no other elements in the rightmost column. Removing these origins gives a set of disjoint paths in $\mathcal{P}^{(\wtilde)}(n_\a)$. Conversely, by attaching the elements of $O^{(w)}(n_\a)$ in the rightmost column to the paths in a set $\mathcal{P}^{(\wtilde)}(n_\a)$, and adding the path $(1_r)$ of length $0$ if necessary, we obtain a set of disjoint paths in $\mathcal{P}_L^{(w)}(n_\a)$. The equation \eqref{PLwPwtilde} is a direct consequence of this one-to-one correspondence.
\end{proof}
\noindent See (6.143) through (6.145) in \cite{kimWhit} for more details. From this lemma, we can derive the following formula for $R_L(n_{a,b})$.
\begin{cor}[Lemma 6.141 of \cite{kimWhit}]\label{RLwRwtilde}
Let $n_{a,b}\in V_w$ be a free variable that is not in the rightmost column of $wn$. The function $R_L=R^{(w)}_L$ satisfies
\begin{equation}\label{RLwRwtilde0}
R^{(w)}_L(n_{a,b})=\begin{cases}
R^{(\widetilde{w})}(n_{a,b}) & \text{if} \quad \pi\i(b)<\pi\i(a)<\pi\i(r), \\
-n_{a,r}\cdot R^{(\widetilde{w})}(n_{a,b}) & \text{if} \quad \pi\i(b)<\pi\i(r)<\pi\i(a), \\
\frac{n_{a,r}}{n_{b,r}}\cdot R^{(\widetilde{w})}(n_{a,b}) & \text{if} \quad \pi\i(r)<\pi\i(b)<\pi\i(a).
\end{cases}
\end{equation}
\end{cor}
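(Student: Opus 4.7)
My plan is to deduce the corollary directly from Lemma~\ref{lem:PLwPwtilde} and Definition~\ref{def:RLR1R2}. Starting from
\[
R_L^{(w)}(n_{a,b}) = (-1)^{t_\a^{(w)}} \frac{P_L^{(w)}(n_{a,b})}{\prod_{1_\mu \in D^{(w)}(n_{a,b})} \rho^{(w)}(1_\mu)},
\]
applying \eqref{PLwPwtilde}, and dividing by the analogous expression for $R^{(\wtilde)}(n_{a,b})$, the corollary reduces to showing that
\[
(-1)^{t_\a^{(w)} - t_\a^{(\wtilde)}}\cdot\frac{\bigl(\prod_{n_{\b,r}\in O^{(w)}_{\downarrow,\pi\i(r)+1}(n_{a,b})} n_{\b,r}\bigr) \cdot \prod_{1_\mu \in D^{(\wtilde)}(n_{a,b})} \rho^{(\wtilde)}(1_\mu)}{\prod_{1_\mu \in D^{(w)}(n_{a,b})} \rho^{(w)}(1_\mu)}
\]
equals $1$, $-n_{a,r}$, or $n_{a,r}/n_{b,r}$ in the three respective cases.

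To evaluate this ratio I would record three elementary observations that follow directly from the block structure of $wn$ and from the definitions of $D$, $\rho$, and $O^{(w)}_{\downarrow,\pi\i(r)+1}$:
\begin{itemize}
\item[(a)] $D^{(w)}(n_{a,b})$ coincides with $D^{(\wtilde)}(n_{a,b})$ except in the middle case $\pi\i(b) < \pi\i(r) < \pi\i(a)$, where $1_r$ is an additional destination; this contributes $t_\a^{(w)} = t_\a^{(\wtilde)} + 1$ and $\rho^{(w)}(1_r) = 1$, since row $\pi\i(r)$ of $wn$ has only $1_r$ as a nonzero entry.
\item[(b)] For every other destination $1_\mu$, $\rho^{(w)}(1_\mu) = \rho^{(\wtilde)}(1_\mu)$ if $\pi\i(\mu) < \pi\i(r)$, and $\rho^{(w)}(1_\mu) = n_{\mu,r}\cdot\rho^{(\wtilde)}(1_\mu)$ if $\pi\i(\mu) > \pi\i(r)$, since the entry in column $r$ of row $\pi\i(\mu)$ of $wn$ is $0$ in the first subcase and the free variable $n_{\mu,r}$ in the second.
\item[(c)] $O^{(w)}_{\downarrow,\pi\i(r)+1}(n_{a,b})$ contains $n_{a,r}$ (the bottom origin) precisely when $\pi\i(a) > \pi\i(r)$, together with $n_{\mu,r}$ for each destination $1_\mu \neq 1_b$ satisfying $\pi\i(r) < \pi\i(\mu) < \pi\i(a)$.
\end{itemize}

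With these in hand the three cases are a short computation. If $\pi\i(a) < \pi\i(r)$, nothing contributes on either side and the ratio is $1$. If $\pi\i(b) < \pi\i(r) < \pi\i(a)$, (a) produces the sign $-1$ and (c) produces $n_{a,r}$ from the bottom row; for each ``middle'' destination $1_\mu$ with $\pi\i(r) < \pi\i(\mu) < \pi\i(a)$, the factor $n_{\mu,r}$ appears once in the numerator via (c) and once in the denominator via (b), so these cancel, leaving $-n_{a,r}$. If $\pi\i(r) < \pi\i(b)$, every destination row lies below $\pi\i(r)$, so (b) contributes $\prod_{\mu \in D^{(w)}} n_{\mu,r}$ in the denominator, while (c) contributes $n_{a,r}\prod_{\mu \in D^{(w)}\smallsetminus\{b\}} n_{\mu,r}$ in the numerator, leaving $n_{a,r}/n_{b,r}$.

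The only delicate point is observation (c), where one must verify that the bottom row's rightmost nonzero entry inside $M_w(n_{a,b})$ really is $n_{a,r}$ (using $(a,r)\in \text{Inv}(\pi\i)$, which follows from $a < b \le r-1$ and $\pi\i(a) > \pi\i(r)$) and that for each intermediate destination $1_\mu$ with $\pi\i(\mu) > \pi\i(r)$ the rightmost nonzero entry of its row within $M_w(n_{a,b})$ is $n_{\mu,r}$. Both facts are consequences of the pattern of zeros above $1_r$ (and free variables below $1_r$) in the rightmost column of $wn$, together with the observation that $1_a$ cannot be a destination because $a < b$ places column $a$ outside $M_w(n_{a,b})$.
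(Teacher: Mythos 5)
Your proposal is correct and takes essentially the same route as the paper's proof in Appendix~\ref{sec:technical}: both start from Definition~\ref{def:RLR1R2} and Lemma~\ref{lem:PLwPwtilde}, and both reduce the claim to comparing the product of $\rho$'s over destinations against the product of origins below $1_r$, using the same facts that $\rho^{(w)}(1_\mu)$ gains a factor $n_{\mu,r}$ exactly when $1_\mu$ is below $1_r$, that $\rho^{(w)}(1_r)=1$, and that the bottom origin $n_{a,r}$ appears in the origin product but not among the $\rho$'s. The only difference is expository: the paper writes out only the third case and declares the first two routine, whereas you carry out all three explicitly, which is a harmless (and arguably more complete) presentation of the same computation.
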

\noindent For brevity, we postpone the proof to Appendix~\ref{sec:technical}. The corollary tells us that among the three terms in \eqref{Rpartition}, $R_L(n_\a)$ is the one that contains the information of the rational map $R$ for a Weyl group element of one size smaller. Thus by getting rid of the terms $R_1(n_\a)$ and $R_2(n_\a)$ we can apply induction to compute the determinants of the submatrices of $wu$ appearing in \eqref{superdiag}. We achieve this by a series of row and column operations.
\subsection{The row and column operations}
Let $1\leq i\leq r-1$. We begin by defining the submatrices of $wn$ and $wu$ that are relevant to the formulas in \eqref{superdiag}.
\begin{definition}\label{defSi}
We let $S_i$ denote the lower right square submatrix of $wn$ of size $i$ whose rows consist of the $(r-i+1)$-th through the $r$-th rows of $wn$ and columns consist of the $(r-i+1)$-th through the $r$-th columns of $wn$.
\end{definition}
\begin{definition}\label{defEi}
We let $E_i$ denote the square submatrix of $wn$ of size $i$, whose rows consist of the $(r-i)$-th row and the $(r-i+2)$-th through the $r$-th rows of $wn$, and columns consist of the $(r-i+1)$-th through the $r$-th columns of $wn$.
\end{definition}
\begin{definition}\label{defTi}
We let $T_i$ and $F_i$ be the submatrices of $wu=R(wn)$ whose entries correspond to those of $S_i$ and $E_i$, respectively. 
\end{definition}
The formulas in \eqref{superdiag} can be written as $h_{i,i}=\frac{\det(T_{r-i+1})}{\det(T_{r-i})}$ and $x_{i,i+1}=\frac{\det (F_{r-i})}{\det (T_{r-i})}$.
\begin{definition}\label{xkdef}
For $k\geq r-i$, we let $x_k$ denote the row vector of length $i$ which consists of the last $i$ entries of the $k$-th row of $wu$, that is, $x_k=(wu)_{k,r-i+1\leq j\leq r}$. For $r-i+1\leq j\leq r$, write $x_k(j)=(wu)_{k,j}=u_{\pi(k),j}$.
\end{definition}
Check that $T_i$ consist of the rows $x_{r-i+1}$ through $x_r$, and $F_i$ consists of the rows $x_{r-i}$ and $x_{r-i+2}$ through $x_r$.
\begin{definition}\label{x'kdef}
    For $k\geq r-i$, $k\neq \pi\i(r)$, we let $x'_k$ denote the row matrix of length $i-1$, \[x'_k=[x'_k(r-i+1) \quad x'_k(r-i+2) \quad \cdots \quad x'_k(r-1)],\] where
    \begin{equation}\label{x'kdef1}
    x'_k(b)=\begin{cases}
    R_L(n_{\pi(k),b})+R_1(n_{\pi(k),b}) & \text{if} \quad x_k(b)=u_{\pi(k),b} \text{ with }\left(\pi(k),b\right)\in Inv(\pi\i) \\
    -n_{\pi(k),r} \cdot x_{k-1}(b) 
    & \text{if}\quad x_k(b)=0 \text{ and }x_{k-1}(b)\neq 0 \\
    x_k(b) & \text{otherwise}
    \end{cases} 
    \end{equation}
    for $r-i+1\leq b\leq r-1$.
\end{definition}
The entry $x_k(b)$ belongs to the first case if $n_{\pi(k),b}$ is a free variable. The entry $x_k(b)$ belongs to the second case if $u_{\pi(k),b}$ is a zero entry which is right below a nonzero entry. The entry $x_k(b)$ belongs to neither of these cases if $x_k(b)=1_b$, or $x_k(b)$ is a zero entry right below a zero entry, that is, if $x_k(b)=x_{k-1}(b)=0$. \par
Let $x_{r-i+s}$ be the row of $T_i$ which contains the highest nonzero entry of the rightmost column. Since the entries in the rightmost column above $1_r$ are all zero, we have $r-i+s=r-i+1$ if $r-i+1>\pi\i(r)$, and $r-i+s=\pi\i(r)$ otherwise.
\begin{definition}\label{Ti'def} We let $T'_i$ be the matrix of size $(i-1)\times (i-1)$ which equals
\[
\left(\begin{smallarray}{c}
 x'_{r-i+2} \\ \vdots \\
 x'_r
\end{smallarray}\right) \quad \text{or} \quad \left(\begin{smallarray}{c}
 x'_{r-i+1} \\ \vdots \\  x'_{r-i+s-1} \\ x'_{r-i+s+1} \\ \vdots \\
 x'_r
\end{smallarray}\right) \quad \text{with} \quad r-i+s=\pi\i(r),\]
depending on whether $r-i+1>\pi\i(r)$ or $r-i+1\leq \pi\i(r)$, respectively.
\end{definition}
\begin{definition}\label{Fi'def1}
We let $F'_i$ be the matrix of size $(i-1)\times (i-1)$ which equals
\[
\left(\begin{smallarray}{c}
 x'_{r-i+1} \\  x'_{r-i+3} \\ \vdots \\ x'_r
\end{smallarray}\right),\quad \left(\begin{smallarray}{c}
 x'_{r-i} \\  x'_{r-i+3} \\ \vdots \\ x'_r
\end{smallarray}\right), \quad\text{or}\quad 
\left(\begin{smallarray}{c}
  x'_{r-i} \\ 
   x'_{r-i+2}  \\ 
   \vdots \\ x'_{r-i+s-1}  \\  x'_{r-i+s+1}  \\
  \vdots\\ 
  x'_r 
\end{smallarray}\right) \quad \text{with} \quad r-i+s=\pi\i(r),
   \]
   depending on whether $r-i+1>\pi\i(r)$, $r-i+1=\pi\i(r)$, or $r-i+1<\pi\i(r)$, respectively.
   \end{definition}
\begin{definition}\label{xLkdef}
    For $k\geq r-i$, $k\neq \pi\i(r)$, we let $x^L_k$ denote the row vector of length $(i-1)$, \[x^L_k=[x^L_k(r-i+1) \quad x^L_k(r-i+2) \quad \cdots \quad x^L_k(r-1)],\] where
    \begin{equation}\label{xLkdef1}
    x^L_k(b)=\begin{cases}
    R_L(n_{\pi(k),b}) & \text{if} \quad \left(\pi(k),b\right)\in Inv(\pi\i) \\
    x_k(b) & \text{otherwise}
    \end{cases} 
    \end{equation}
    for $r-i+1\leq b\leq r-1$.
\end{definition}
\begin{definition}\label{TLFLdef}
We let $T^L_i$ and $F^L_i$ denote the matrices of size $(i-1)\times (i-1)$ obtained from $T'_i$ and $F'_i$, respectively, by replacing the rows $x'_k$ in the matrices $T'_i$ and $F'_i$ with $x^L_k$.
\end{definition}
We prove Proposition~\ref{thm:rowcolop} in two steps. First, we apply a series of row operations to $T_i$ and $F_i$ to write their determinants in terms of $T'_i$ and $F'_i$.
\begin{prop}\label{prop:rowop}
Let $1\leq i\leq r$ satisfy $r-i+1>\pi(r)$, so that the bottom row of $T_i$ and $F_i$ does not contain $1_{\pi(r)}$, the entry of $1$ that is in the bottom row of $wu$.
 \begin{enumerate}
        \item[(i)] We have
$\det (T_i)=(-1)^{s+i} \cdot u_{\pi(r-i+s),r}\cdot \det( T_i')$, where $s$ is such that $x_{r-i+s}$ is the row of $T_i$ that contains the highest nonzero entry of the rightmost column of $T_i$.
\item[(ii)] We have
\[\det (F_i)=\begin{cases}
    \frac{1}{n_{\pi(r-i+1),r}}\det(T_i)-\frac{(-1)^{i}}{n_{\pi(r-i+1),r}}\cdot u_{\pi(r-i+2),r}\cdot \det (F'_i) & \text{ if } r-i+1>\pi\i(r) \\
    (-1)^{i}\cdot u_{\pi(r-i+2),r}\cdot \det (F'_i) & \text{ if } r-i+1=\pi\i(r) \\
   (-1)^{r-\pi\i(r)}\cdot \det (F'_i) & \text{ if } r-i+1<\pi\i(r).
\end{cases}\]
    \end{enumerate}
\end{prop}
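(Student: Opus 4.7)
The starting observation is that the rightmost column of $wn$ is a single unbroken chain from $1_r$ down to the bottom, so
\[
    u_{\pi(k), r} \;=\; n_{\pi(k), r}\cdot u_{\pi(k-1), r} \qquad \text{for } k > \pi\i(r),
\]
with the convention $u_{\pi(\pi\i(r)), r} = 1$. Consequently the row operation $x_k \mapsto x_k - n_{\pi(k), r}\, x_{k-1}$ annihilates the $r$-th entry of $x_k$, and when performed iteratively from the bottom row upward, each operation uses only unmodified rows above, so the full sequence preserves the determinant.

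\textbf{Part (i).} I would apply this row operation to every row of $T_i$ strictly below the pivot $x_{r-i+s}$. The rightmost column of the resulting matrix has a single nonzero entry $u_{\pi(r-i+s), r}$ at position $s$ (equal to $1$ when the pivot row is the one containing $1_r$). Expanding along this column produces the sign $(-1)^{s+i}$ and the factor $u_{\pi(r-i+s), r}$, together with an $(i-1)\times(i-1)$ minor. To identify the minor with $T'_i$ entry by entry via Definition~\ref{x'kdef}, it suffices to establish the combinatorial identity
\begin{equation}\label{eq:keyident}
    R_2(n_{\pi(k), b}) \;=\; n_{\pi(k), r}\cdot u_{\pi(k-1), b} \qquad \text{whenever } n_{\pi(k), b} \in V_w.
\end{equation}
The other two cases of Definition~\ref{x'kdef} are automatic: the ``zero above nonzero'' case matches the row operation tautologically, and an entry $x_k(b) = 1_b$ forces $x_{k-1}(b) = 0$ because $\pi$ is a bijection. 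For rows $k$ above the pivot (which only arise when $r-i+1 \leq \pi\i(r)$) one checks directly that $R_2(n_{\pi(k), b}) = 0$, since when $k < \pi\i(r)$ no origin of $n_{\pi(k), b}$ can lie in the rightmost column of $wn$; thus $x_k = x'_k$ already for those rows and no operation is needed.

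\textbf{Part (ii).} I split into the three cases by the location of $\pi\i(r)$. When $r-i+1 > \pi\i(r)$, both $x_{r-i+1}$ and $x_{r-i}$ have free-variable entries in column $r$, and \eqref{eq:keyident} (with $k = r-i+1$) yields the decomposition $x_{r-i+1} = n_{\pi(r-i+1), r}\, x_{r-i} + y$, where $y$ has $y(r)=0$ and agrees with $x'_{r-i+1}$ in columns $r-i+1, \ldots, r-1$. Linearity of the determinant in the top row gives
\[
    \det T_i \;=\; n_{\pi(r-i+1), r}\, \det F_i \;+\; \det M_y,
\]
where $M_y$ replaces the top row of $T_i$ by $y$. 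Applying the Part~(i) procedure to $M_y$ (the pivot sits at position $2$ with value $u_{\pi(r-i+2), r}$, contributing sign $(-1)^{2+i} = (-1)^i$) identifies $\det M_y = (-1)^i u_{\pi(r-i+2), r}\det F'_i$, and solving for $\det F_i$ gives the stated formula. When $r-i+1 = \pi\i(r)$, the top row $x_{r-i}$ of $F_i$ sits above $1_r$ with zero in column $r$, so the pivot is $x_{r-i+2}$ at position $2$; the Part~(i) operations on rows below, together with $x_{r-i} = x'_{r-i}$ (from the automatic vanishing of $R_2$ for $k < \pi\i(r)$), produce the desired formula. When $r-i+1 < \pi\i(r)$, the row containing $1_r$ lies strictly inside $F_i$ at position $\pi\i(r) - r + i$ and serves as the pivot with value $1$, producing the sign $(-1)^{r - \pi\i(r)}$; rows above are in $x'_k$-form automatically, and rows below transform via Part~(i)-style operations.

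\textbf{Main obstacle.} The crux is the identity \eqref{eq:keyident}. I would prove it by a bijection between $\mathcal{P}_2(n_{\pi(k), b})$ and $\mathcal{P}(n_{\pi(k-1), b})$: every set in $\mathcal{P}_2$ has its bottom-origin path beginning with the up-step $n_{\pi(k), r} \to n_{\pi(k-1), r}$, and stripping this step yields a configuration of disjoint paths starting from $n_{\pi(k-1), r}$ together with the unchanged other origins. Matching the denominators $\prod_{1_\mu \in D} \rho(1_\mu)$ requires checking that $D(n_{\pi(k), b}) = D(n_{\pi(k-1), b})$ whenever $n_{\pi(k-1), b}$ is a free variable, with short dedicated arguments in the degenerate cases where $u_{\pi(k-1), b} \in \{0, 1\}$. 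I expect this bijection, and the bookkeeping of origins and destinations under the first-step excision, to follow from the tail-swapping technique embodied in Lemma~\ref{lem:firststep} of the introduction.
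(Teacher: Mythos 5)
Your proposal is correct and follows essentially the same route as the paper's proof: the row operations $x_k \mapsto x_k - n_{\pi(k),r}\,x_{k-1}$, the key identity $R_2(n_{\pi(k),b}) = n_{\pi(k),r}\,u_{\pi(k-1),b}$ established via a prepend/strip bijection between $\mathcal{P}_2(n_{\pi(k),b})$ and $\mathcal{P}(n_{\pi(k-1),b})$, and the resulting determinant expansions with the same case split are exactly what the paper does (Lemma~\ref{rowop1}, Corollary~\ref{rowopcor}, and the subsequent expansion). One small misattribution at the end: the bijection underlying \eqref{eq:keyident} is simpler than the tail-swapping of Lemma~\ref{lem:firststep} — it only requires removing a single initial vertex from the bottom-origin path, since the remaining paths are unchanged — and tail-swapping is reserved for the column-operation analysis in a later section of the paper.
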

After the row operations, we perform a series of column operations to reduce $T'_i$ and $F'_i$ $T'_i$ and $F'_i$ to $T^L_i$ and $F^L_i$, respectively.
\begin{prop}\label{prop:colop}
Let $1\leq i\leq r$ satisfy $r-i+1>\pi(r)$. There exists a lower triangular matrix $M_{\text{col}}$ with ones in the diagonal entries, which represents the column operations on $T'_i$, such that
$T'_i\cdot M_{\text{col}}=T^L_i$, and $F'_i\cdot M_{\text{col}}=F^L_i$.
\end{prop}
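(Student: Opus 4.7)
The plan is to build $M_{\text{col}}$ column by column via a path-decomposition argument for $R_1$, and to reduce both identities $T'_i\cdot M_{\text{col}}=T^L_i$ and $F'_i\cdot M_{\text{col}}=F^L_i$ to a single row-wise cancellation identity. Since $M_{\text{col}}$ acts on the right, it modifies columns in a way that does not depend on which rows are selected; both equalities therefore follow at once from establishing, for every admissible row index $k$ and every $r-i+1 \le b \le r-1$, the identity
\[
x'_k(b) + \sum_{b<b'\le r-1} (M_{\text{col}})_{b',b}\, x'_k(b') = x^L_k(b).
\]
By Definitions \ref{x'kdef} and \ref{xLkdef}, the discrepancy $x'_k(b)-x^L_k(b)$ equals $R_1(n_{\pi(k),b})$ in the first case of Definition \ref{x'kdef}, equals $-n_{\pi(k),r}\,x_{k-1}(b)$ in the second case, and vanishes in the third case. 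The first two of these must be absorbed by the column operations, while the third must be preserved.

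The core combinatorial step is to produce coefficients $\lambda_{b,b'}$, depending only on $b,b',\pi$ (not on $k$), such that $R_1(n_{\pi(k),b}) = \sum_{b<b'<r} \lambda_{b,b'}\, x'_k(b')$ holds whenever $(\pi(k),b)\in Inv(\pi\i)$. To find these, I would decompose each $\mathbf{p} \in \mathcal{P}_1(n_{\pi(k),b})$ by tracing its bottom-origin path --- which, by the definition of $\mathcal{P}_1$, begins with a leftward step along the bottom row of $M_w(n_{\pi(k),b})$ --- until it first turns upward, and recording the column $b^*$ where the turn occurs. This splits $\mathbf{p}$ into (i) a leftward segment along the bottom row from the rightmost column to column $b^*$, whose weight is a monomial in the column-indexed data, and (ii) a complementary tuple of disjoint paths contained in a strictly smaller submatrix that, after appropriate relabeling, enumerates $\mathcal{P}_L(n_{\pi(k),b^*})$. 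Grouping by $b^*=b'$ and applying Corollary \ref{RLwRwtilde} to re-express the resulting $R_L$ contributions as entries of $x'_k(b')$ yields the desired identity, with $\lambda_{b,b'}$ the common weight of the leftward segments. Setting $(M_{\text{col}})_{b',b}=-\lambda_{b,b'}$ then cancels $R_1$ in the first case; a short auxiliary check shows that the same $\lambda_{b,b'}$ handle the second case (the factor $-n_{\pi(k),r}$ precisely accounts for the one-row shift from $x_{k-1}$ to $x_k$), and in the third case $x'_k(b')$ vanishes at the relevant positions, so no extra adjustment is needed. Lower triangularity and unit diagonal of $M_{\text{col}}$ are visible from the construction.

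The main obstacle I anticipate is rigorously verifying the row-independence of $\lambda_{b,b'}$: a priori the set $O_1(n_{\pi(k),b})$ and the bottom origin itself depend on $k$, so one must argue that all row-dependent data is absorbed inside the factor $x'_k(b')$, leaving only column-intrinsic monomials inside $\lambda_{b,b'}$. This is where the detailed bookkeeping of Definitions \ref{O1} through \ref{Dne} comes in, together with the tail-swapping technique of Lemma \ref{lem:firststep} to normalize any crossings that arise when extracting the leftward segment. Once universality of $\lambda_{b,b'}$ is secured, the three cases above collapse into routine checks, and Proposition \ref{prop:colop} follows.
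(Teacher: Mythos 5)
Your high-level strategy is right: since $M_{\text{col}}$ acts on the right, the two equalities $T'_iM_{\text{col}}=T^L_i$ and $F'_iM_{\text{col}}=F^L_i$ reduce to a single row-wise identity of the form you wrote, and the discrepancy $x'_k(b)-x^L_k(b)$ splits into the three cases you identify. That is indeed how the paper organizes the proof (via Proposition~\ref{Qjllem} and the four entry types in Section~\ref{fourtypes}). But the core combinatorial step is wrong, and this is where the entire difficulty of the proposition lives.

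You decompose $\mathbf{p}\in{\mathcal P}_1(n_{\pi(k),b})$ by tracing the bottom-origin path until it first turns upward at column $b^*$, and then claim the complementary paths ``after appropriate relabeling, enumerate ${\mathcal P}_L(n_{\pi(k),b^*})$.'' This is not true. In the example \eqref{ex:w2}, take $\mathbf{p}_5\in{\mathcal P}_1(n_{3,4})$: its bottom-origin path is $(n_{3,6},n_{3,5},n_{1,5},1_5)$, turning upward at column $b^*=5$. Removing the leftward segment $(n_{3,6},n_{3,5})$ leaves a tuple with \emph{three} residual paths (the truncated bottom-origin path plus the blue and magenta paths). But $M_{w_2}(n_{3,5})$ has $D(n_{3,5})=\{1_5\}$ and $O(n_{3,5})=\{n_{3,6}\}$, so ${\mathcal P}_L(n_{3,5})$ consists of single-path tuples only. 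The sets of origins and destinations do not match either: the truncated bottom-origin path begins at $n_{\pi(k),b^*}$, which is the bottom-\emph{left} corner of $M_w(n_{\pi(k),b^*})$ and hence not an origin of $n_{\pi(k),b^*}$ at all. The correct decomposition of ${\mathcal P}_1$ (Lemma~\ref{P1lem}) is by \emph{which $O_1$-origin} sends its path upward, not by where the bottom-origin path turns; this yields the factorization $P_1(n_{\pi(d),j})=\sum_m P(\cdots)\,n_{\pi(\d_m),r}\,P_L(\cdots)$ with $P_L$-factors taken over the rows below $\d_m$. The $P_L$-factor is what eventually matches an $x^L$-entry; your leftward-segment split produces neither a $P_L$-block nor a set with the right origins/destinations.

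There is a second, related gap: your coefficients $\lambda_{b,b'}=-(M_{\text{col}})_{b',b}$ multiply $x'_k(b')$, which itself contains an $R_1(n_{\pi(k),b'})$ term. Your claimed identity $R_1(n_{\pi(k),b})=\sum_{b'}\lambda_{b,b'}x'_k(b')$ is therefore implicitly recursive in $R_1$, and nothing in your sketch resolves that recursion or shows the system is consistent. The paper sidesteps this entirely by having the step coefficients $Q_j(l)$ multiply $x^L_d(l)=R_L(n_{\pi(d),l})$ (already reduced), and then composing elementary column operations right-to-left (starting from $C'_{r-1}=C^L_{r-1}$, Lemma~\ref{Qjllembase}). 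This is not a cosmetic difference: the combinatorial decomposition of $P_1$ naturally produces $P_L$-pieces, so the natural target is $x^L$, not $x'$. Finally, the ``short auxiliary check'' you promise for the second case, and the row-independence of $\lambda_{b,b'}$, are exactly where the real work is (compare Lemmas~\ref{Qjltype2}--\ref{Qjltype4}, Corollary~\ref{R1lem}, and Lemma~\ref{lem:qjlsumnew}); they cannot be waved off, and the incorrect decomposition makes them unapproachable as stated.
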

It is clear that the two propositions imply Proposition~\ref{thm:rowcolop}. The computation of the determinants of $T_i$ and $F_i$ for which $r-i+1\leq\pi(r)$ does not require any row and column operations and will be handled separately in the second paper.
\section{Row operations on $T_i$ and $F_i$}\label{sec:rowop}
In this section, we prove Proposition~\ref{prop:rowop}. Throughout this section, we assume that $r\neq \pi(r)$, and we fix a positive integer $i$ that satisfies $1\leq i<r-\pi(r)+1$ so that the bottom row of $T_i$ and $F_i$ does not contain the entry $1_{\pi(r)}$. As explained above Definition~\ref{Ti'def}, $T_i$ takes the form of either
\[
T_i=\left(\begin{smallarray}{cccc}
\star& \cdots & \star & u_{\pi(r-i+1),r} \\ 
\star & \cdots & \star & u_{\pi(r-i+2),r}\\ 
\vdots & \cdots & \vdots & \vdots \\
\star & \cdots & \star & u_{\pi(r),r} 
\end{smallarray}\right) \quad \text{or} \quad 
T_i=\left(\begin{smallarray}{cccc}
\star& \cdots & \star & 0 \\ 
\vdots & \cdots & \vdots & \vdots \\
\star& \cdots & \star & 0 \\
0 & \cdots & 0 & 1_r\\
\star & \cdots & \star & u_{\pi(r-i+s+1),r}\\ 
\vdots & \cdots & \vdots & \vdots \\
\star & \cdots & \star & u_{\pi(r),r} 
\end{smallarray}\right),   
\]
depending on whether $r-i+1>\pi\i(r)$ or $r-i+1\leq \pi\i(r)$, respectively. Recall (\ref{Rpartition}). The row operations on $T_i$ and $F_i$ eliminate the terms $R_2(n_\a)$ from the entries $u_\alpha$ with $\alpha\in Inv(\pi\i)$.
\begin{lem}\label{rowop1}
Let $\left(\pi(a),b\right)\in Inv(\pi\i)$. 
If $a<\pi\i(r)$, then $n_{\pi(a),r}=0$ and $R_1(n_{\pi(a),b})=R_2(n_{\pi(a),b})=0$,
hence $n_{\pi(a),b}$ satisfies
\begin{equation}\label{nabove1r2}
  u_{\pi(a),b}-n_{\pi(a),r}\cdot u_{\pi(a-1),b}=u_{\pi(a),b}=R_L(n_{\pi(a),b}).
\end{equation}
If $a>\pi\i(r)$, then
\begin{equation}\label{rowop1eq}
u_{\pi(a),b}-n_{\pi(a),r}\cdot u_{\pi(a-1),b}=R_L(n_{\pi(a),b})+R_1(n_{\pi(a),b}).  
\end{equation}
\end{lem}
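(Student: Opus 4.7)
Since $(\pi(a),b)\in Inv(\pi\i)$ forbids $\pi(a)=r$, we have $a\neq\pi\i(r)$, so the two cases in the statement are exhaustive. The identity in the second case is equivalent, via the decomposition $R=R_L+R_1+R_2$ of (\ref{Rpartition}), to the single assertion
\[
R_2(n_{\pi(a),b})=n_{\pi(a),r}\cdot u_{\pi(a-1),b},
\]
which is what I will aim to prove.

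The first case ($a<\pi\i(r)$) will be handled structurally. Since row $a$ lies strictly above the row containing $1_r$, the only nonzero entry at or above $1_r$ in column $r$ is $1_r$ itself, so $n_{\pi(a),r}=0$. Moreover, the submatrix $M_w(n_{\pi(a),b})$ consists of rows $\pi\i(b)$ through $a$, all strictly above row $\pi\i(r)$, so its column $r$ is identically zero. By Definition~\ref{Odef} every origin of $n_{\pi(a),b}$ then lies in a column $<r$, so no origin sits in the rightmost column of $wn$; by the remark preceding Definition~\ref{P1def} this forces $\mathcal{P}'(n_{\pi(a),b})=\emptyset$, hence $R_1=R_2=0$ and (\ref{nabove1r2}) follows.

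For the second case ($a>\pi\i(r)$), $n_{\pi(a),r}$ is a free variable and is the bottom origin of $n_{\pi(a),b}$. The plan is a \emph{trim-the-tail} bijection: every element of $\mathcal{P}_2(n_{\pi(a),b})$ begins with the edge $n_{\pi(a),r}\to(a-1,r)$, and removing this edge should produce a path set living one row higher. The details split according to $\pi(a-1)$ relative to $b$. If $\pi(a-1)<b$, then $n_{\pi(a-1),b}$ is a free variable and one checks $D(n_{\pi(a-1),b})=D(n_{\pi(a),b})$ (because $\pi(a)<b$ places $1_{\pi(a)}$ outside the submatrix) while $O(n_{\pi(a-1),b})$ is obtained from $O(n_{\pi(a),b})$ by replacing $n_{\pi(a),r}$ with $n_{\pi(a-1),r}$; the trim is then a bijection $\mathcal{P}_2(n_{\pi(a),b})\leftrightarrow \mathcal{P}(n_{\pi(a-1),b})$ with weight ratio $n_{\pi(a),r}$, and because $D$ is common the signs and $\rho$-factors agree. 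If $\pi(a-1)=b$, then $u_{\pi(a-1),b}=1$ and $M_w(n_{\pi(a),b})$ is a two-row block; the unique element of $\mathcal{P}_2$ sweeps leftward along row $a-1$ from $(a-1,r)$ to $1_b$, and a direct computation yields $P_2(n_{\pi(a),b})=n_{\pi(a),r}\cdot\rho(1_b)$, hence $R_2(n_{\pi(a),b})=n_{\pi(a),r}$.

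The hard case is $\pi(a-1)>b$, where $(a-1,b)$ is a zero of $wn$ and $u_{\pi(a-1),b}=0$, so I must show $\mathcal{P}_2(n_{\pi(a),b})=\emptyset$. The main obstacle is recognizing why: the inequality $\pi(a-1)>b$ places $1_{\pi(a-1)}$ inside $M_w(n_{\pi(a),b})$ as a destination other than the top destination $1_b$, and Definition~\ref{Odef} then forces the rightmost nonzero entry of row $a-1$ inside the submatrix, namely the entry at $(a-1,r)$, to be an origin of $n_{\pi(a),b}$ as well. Any element of $\mathcal{P}_2$ would send the bottom-origin path through $n_{\pi(a),r}\to(a-1,r)$ and hence through a second origin, contradicting disjointness, so $\mathcal{P}_2(n_{\pi(a),b})=\emptyset$ as required. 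Once this structural observation is in hand, the three subcases combine to give $R_2(n_{\pi(a),b})=n_{\pi(a),r}\cdot u_{\pi(a-1),b}$ in all situations, completing the proof.
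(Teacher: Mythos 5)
Your proof is correct. For the main case (where $n_{\pi(a-1),b}$ is a free variable, i.e., $\pi(a-1)<b$) you use exactly the paper's argument: delete the origin $n_{\pi(a),r}$ from each set in ${\mathcal P}_2(n_{\pi(a),b})$ to get a weight-$n_{\pi(a),r}$ bijection with ${\mathcal P}(n_{\pi(a-1),b})$, and use $D(n_{\pi(a),b})=D(n_{\pi(a-1),b})$ to match the sign and $\rho$-factor in Definition~\ref{def:RLR1R2}. You go beyond the paper by actually proving the two subcases it defers to the thesis with the remark that they ``follow the same logic but are simpler.'' Your $\pi(a-1)=b$ subcase is a direct evaluation of the unique ${\mathcal P}_2$-path giving $P_2=n_{\pi(a),r}\,\rho(1_b)$, and your $\pi(a-1)>b$ subcase observes that $1_{\pi(a-1)}$ is then a non-top destination, which by Definition~\ref{Odef} promotes $n_{\pi(a-1),r}$ to a second origin, so any set in ${\mathcal P}_2$ would route the bottom-origin path through another origin and violate disjointness, forcing ${\mathcal P}_2=\emptyset$. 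That last step is worth flagging: it is an emptiness argument via a forced intersection rather than a degenerate instance of the trim bijection, so it is not literally the ``same logic'' --- a small but genuine clarification of what the paper elides.
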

\begin{proof}
If $a<\pi\i(r)$, then $n_{\pi(a),b}$ is above the row that contains $1_r$. Since every entry above $1_r$ is zero, $O(n_{\pi(a),b})$ does not contain any variable on the rightmost column. By the remark below Definition~\ref{P2def}, we have $R_1(n_{\pi(a),b})=R_2(n_{\pi(a),b})=0$, and \eqref{nabove1r2} follows immediately. \par
Now, suppose that $a>\pi\i(r)$. Then the bottom origin of $n_{\pi(a),b}$ is $n_{\pi(a),r}\in V_w$. The entry $n_{\pi(a-1),b}$, which is right above $n_{\pi(a),b}$, is either $0$, $1_b$, or a free variable. We present the proof of \eqref{rowop1eq} for the third case and omit the first two, as they follow the same logic but are simpler (see \cite[Lemma 5.4]{kimWhit} for the first two cases). Henceforth assume that assume that $n_{\pi(a-1),b}\in V_w$. In this case, we have $D(n_{\pi(a-1),b})=D(n_{\pi(a),b})$ and 
$O(n_{\pi(a-1),b})=\{n_{\pi(a-1),r}\}\cup O(n_{\pi(a),b})\smallsetminus\{n_{\pi(a),r}\}$. \par 
Let $\mathbf p\in {\mathcal P}_2(n_{\pi(a),b})$. Then by Definition~\ref{P2def} the path $p\in \mathbf p$ from $n_{\pi(a),r}$ starts by going up. We can regard $p$ as a path starting from $n_{\pi(a-1),r}$, with $n_{\pi(a),r}$ prepended to the beginning. Hence the set $\mathbf p'$ obtained by removing $n_{\pi(a),r}$ from the path $p$ is in the collection ${\mathcal P}(n_{\pi(a-1),b})$. Conversely, let $\mathbf q\in {\mathcal P}(n_{\pi(a-1),b})$, and let $q\in \mathbf q$ be the path which starts from $n_{\pi(a-1),r}$. The set $\mathbf q'$ obtained by prepending the path $q$ by $n_{\pi(a),r}$ is in ${\mathcal P}_2(n_{\pi(a),b})$. This shows that ${\mathcal P}_2(n_{\pi(a),b})$ is in one-to-one correspondence with ${\mathcal P}(n_{\pi(a-1),b})$. It follows that
    $P_2(n_{\pi(a),b})=P(n_{\pi(a-1),b})\cdot n_{\pi(a),r}$. Since $n_{\pi(a),b}$ and $n_{\pi(a-1),b}$ have the same set of destinations, it follows from Definition~\ref{def:RLR1R2} that we have $R_2(n_{\pi(a),b})=R(n_{\pi(a-1),b})\cdot n_{\pi(a),r}$, from which \eqref{rowop1eq} follows.
\end{proof}
\begin{cor}[Lemma 5.14 of \cite{kimWhit}]\label{rowopcor}
Recall the Definition~\ref{x'kdef} of $x'_k$. For $k\geq r-i$, $k\neq \pi\i(r)$, we have $x_{k}-n_{\pi(k),r}\cdot x_{k-1}=[x_{k}'\ 0]$. In particular, if $k<\pi\i(r)$, then $n_{\pi(k),r}=0$, and $x_k(b)=x'_k(b)$ for all $r-i+1\leq b\leq r-1$.
\end{cor}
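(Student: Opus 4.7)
The plan is to verify the identity $x_k - n_{\pi(k),r}\,x_{k-1} = [x'_k\ 0]$ entry by entry, then deduce the ``in particular'' clause as an immediate consequence. Writing the $b$-th coordinate of both sides, I need to check that
\[
u_{\pi(k),b} - n_{\pi(k),r}\cdot u_{\pi(k-1),b} = \begin{cases} x'_k(b) & r-i+1 \le b \le r-1, \\ 0 & b=r. \end{cases}
\]

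For the last column ($b=r$) I would argue directly from the defining formula for $R$ that $u_{\pi(k),r} = n_{\pi(k),r}\cdot u_{\pi(k-1),r}$. If $k<\pi\i(r)$ both sides are zero. If $k>\pi\i(r)$, then $n_{\pi(k),r}\in V_w$, and inspecting the submatrix $M_w(n_{\pi(k),r})$ shows that it is a single column running from $1_r$ down to $n_{\pi(k),r}$. Consequently $D(n_{\pi(k),r})=\{1_r\}$, $O(n_{\pi(k),r})=\{n_{\pi(k),r}\}$, and there is a unique path — the straight vertical one. Because $\pi(\pi\i(r))=r$ forces the row of $1_r$ to contain no other nonzero entries, $\rho(1_r)=1$ and $t_\alpha=0$, so $R(n_{\pi(k),r})=n_{\pi(k),r}\cdot n_{\pi(k-1),r}\cdots n_{\pi(\pi\i(r)+1),r}$. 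This immediately yields the required recurrence, whether $k-1=\pi\i(r)$ (base case, $u_{r,r}=1$) or $k-1>\pi\i(r)$.

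For the remaining columns ($r-i+1\le b\le r-1$) I follow the three cases of Definition~\ref{x'kdef} directly. Case (a), $n_{\pi(k),b}\in V_w$, is handled entirely by Lemma~\ref{rowop1}: equation \eqref{rowop1eq} is the statement when $k>\pi\i(r)$, while \eqref{nabove1r2} combined with $n_{\pi(k),r}=0=R_1(n_{\pi(k),b})$ gives it when $k<\pi\i(r)$. Case (b), $x_k(b)=0$ and $x_{k-1}(b)\neq 0$, is immediate from the definition. Case (c) breaks into two sub-situations: either $x_k(b)=0=x_{k-1}(b)$, where everything collapses to $0$, or $x_k(b)=1_b$, which forces $b=\pi(k)$. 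In the latter I must check that $n_{\pi(k),r}\cdot x_{k-1}(b)=0$, for which it suffices to show $x_{k-1}(\pi(k))=0$; but the entry $n_{\pi(k-1),\pi(k)}$ is neither a diagonal entry (since $\pi(k-1)\neq \pi(k)$) nor a free variable ($(\pi(k-1),\pi(k))\in Inv(\pi\i)$ would force $k-1>\pi\i(\pi(k))=k$), hence it equals $0$ and so does its image $u_{\pi(k-1),\pi(k)}$.

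The ``in particular'' clause drops out: when $k<\pi\i(r)$, neither $(\pi(k),r)\in Inv(\pi\i)$ nor $\pi(k)=r$ can hold, so $n_{\pi(k),r}=0$ and the row-operation is trivial. I expect the main bookkeeping headache to be the column-$r$ computation in Step 1, since that is the one place where the abstract formula for $R$ must actually be unfolded; all other entries reduce to invoking Lemma~\ref{rowop1} or to purely combinatorial sparsity arguments about $wn$.
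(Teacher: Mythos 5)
Your proof is correct, and since the paper itself dismisses the corollary with ``the proof is straightforward and therefore omitted,'' you are supplying details that the paper declines to write down rather than competing with an alternative argument. The structure you chose — treat the last coordinate $b=r$ separately by unfolding $R$ on the single-column submatrix $M_w(n_{\pi(k),r})$, and for $r-i+1\le b\le r-1$ run through the three alternatives of Definition~\ref{x'kdef} — is exactly what the definitions force on you. Case (a) is Lemma~\ref{rowop1}; cases (b) and (c) are definition-chasing; and the ``in particular'' clause falls out once you note $n_{\pi(k),r}=0$ above $1_r$.

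Two small remarks on presentation, not on correctness. In the $b=r$ step you invoke ``$\pi(\pi\i(r))=r$'' to explain $\rho(1_r)=1$; the relevant point is simply that $1_r$ sits in the rightmost column, so its row in $wn$ has no nonzero entries other than $1_r$ itself, whence $\rho(1_r)=1$. In sub-case (c) with $x_k(b)=1_b$, your inversion-set argument showing $n_{\pi(k-1),\pi(k)}=0$ is fine, but a shorter route is to observe that all entries of column $b$ strictly above $1_b$ vanish (they lie above the unique $1$ in that column, and all nonzero entries of column $b$ other than $1_b$ are free variables below $1_b$), so $x_{k-1}(b)=0$ automatically. Neither of these affects the validity of what you wrote.
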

\noindent The proof is straightforward and therefore omitted.
\subsection{Proof of Proposition~\ref{prop:rowop}}
First, to prove part (i), we apply row operations to $T_i$ to put zeros in all but one entry of the rightmost column. Let $x_{r-i+s}$ be the row of $T_i$ that contains the highest nonzero entry of the rightmost column. For each row $x_k$ below $x_{r-i+s}$, we subtract $n_{\pi(k),r}$ times the row $x_{k-1}$ right above $x_k$. The row operations preserve the determinant. By Corollary~\ref{rowopcor}, the row operations change the rows $x_k$ with $k>r-i+s$ to $[x'_k \ 0]$. It also follows from Corollary~\ref{rowopcor} that $x_k=[x'_k \ 0]$ for all $k<r-i+s$. Therefore, after the row operations $T_i$ becomes the form of either
\begin{equation}\label{Ti'form}
    \left(\begin{smallarray}{cc}
\star \, \cdots \, \star & u_{\pi(r-i+1),r} \\ x'_{r-i+2} &  0\\ 
\vdots& \vdots \\
x'_{r} & 0 
\end{smallarray}\right) \quad \text{or} \quad \left(\begin{smallarray}{cc}
 x'_{r-i+1} & 0  \\ 
 \vdots & \vdots \\
 x'_{r-i+s-1} & 0 \\ 
0 \,\, \cdots \,\, 0 & 1_r\\
 x'_{r-i+s+1} & 0 \\ 
 \vdots & \vdots \\
 x'_{r} & 0 
\end{smallarray}\right), 
\end{equation}
depending on whether $s=1$ or $s>1$. Removing the rightmost column and the row $x_{r-i+s}$ from the above matrices gives $T'_i$. This proves part (i). \par
For part (ii), first consider the case $r-i+1>\pi\i(r)$. Then $1_r$ is above the $(r-i+1)$-th row of $wu$, so $n_{\pi(k),r}$ is a free variable for all $k\geq r-i+1$. By Corollary~\ref{rowopcor}, for each $k\geq r-i+1$ we have
$ x_{k}-n_{\pi(k),r}\cdot x_{k-1}=[x_{k}'\ 0]$.
In particular, we have
\[x_{r-i+1}-n_{\pi(r-i+1),r}\cdot x_{r-i}=[x'_{r-i+1} \ 0],\]
which implies
\[
\det(F_i)=\det\left(\begin{smallarray}{c}
 x_{r-i} \\ x_{r-i+2} \\ \vdots \\ x_r
\end{smallarray}\right) =\frac{1}{n_{\pi(r-i+1),r}}\cdot\det\left(\begin{smallarray}{c}
 x_{r-i+1} \\ x_{r-i+2} \\ \vdots \\ x_r
\end{smallarray}\right)-\frac{1}{n_{\pi(r-i+1),r}}\cdot\det\left(\begin{smallarray}{cc}
 x'_{r-i+1} & 0 \\ \multicolumn{2}{c}{x_{r-i+2}} \\ \multicolumn{2}{c}{\vdots} \\ \multicolumn{2}{c}{x_r}
\end{smallarray}\right).
\]
The first matrix on the right is $T_i$. We apply the row operations to the second matrix: we subtract $n_{\pi(r),r}\cdot x_{r-1}$ from $x_r$, $n_{\pi(r-1),r}\cdot x_{r-2}$ from $x_{r-1}$, proceeding up until we apply these row operations to all of the rows which are below $x_{r-i+2}$. By Corollary~\ref{rowopcor}, this reduces $x_k$ with $k>r-i+2$ to $[x'_k \ 0]$. This operation preserves the determinant, hence
\[
\det(F_i)=\frac{1}{n_{\pi(r-i+1),r}}\cdot\det(T_i)-\frac{1}{n_{\pi(r-i+1),r}}\cdot\det\left(\begin{smallarray}{cc}
 x'_{r-i+1} & 0 \\ \multicolumn{2}{c}{x_{r-i+2}} \\ x'_{r-i+3} & 0\\ \multicolumn{2}{c}{\vdots} \\ x'_r & 0
\end{smallarray}\right).
\]
Removing the row $x_{r-i+2}$ and the rightmost column from the second matrix on the right gives $F'_i$. From this, it is clear that its determinant equals $(-1)^{i}\cdot u_{\pi(r-i+2),r}\cdot \det(F'_i)$. This proves the first case of part (ii). \par
The second and the third cases of part (ii) are proved similarly. By Corollary~\ref{rowopcor}, $F_i$ has the form of either
\[
\left(\begin{smallarray}{cc}
 x'_{r-i} & 0 \\ \multicolumn{2}{c}{x_{r-i+2}} \\ \multicolumn{2}{c}{\vdots} \\ \multicolumn{2}{c}{x_r}
\end{smallarray}\right) \quad \text{or}\quad \left(\begin{smallarray}{cc}
   x'_{r-i} & 0 \\ x'_{r-i+2} & 0 \\ \vdots & \vdots \\ x'_{r-i+s-1} & 0 \\ 0\,\, \cdots\,\, 0 & 1_r \\ \multicolumn{2}{c}{x_{r-i+s+1}} \\ \multicolumn{2}{c}{\vdots} \\ \multicolumn{2}{c}{x_r}
\end{smallarray}\right) \quad \text{with}\quad r-i+s=\pi\i(r),
\]
depending on whether $r-i+1=\pi\i(r)$ or $r-i+1<\pi\i(r)$, respectively. In either case, we apply row operations to $F_i$ as follows: we subtract $n_{\pi(r),r}\cdot x_{r-1}$ from $x_r$, $n_{\pi(r-1),r}\cdot x_{r-2}$ from $x_{r-1}$, proceeding up until we apply these row operations to all of the rows below $x_{r-i+2}$ or below $1_r$, respectively. This turns the rows $x_{r-i+j}$ below $x_{r-i+2}$ or $1_r$, respectively, to $[x'_{r-i+j} \ 0]$. In either case, removing the row $x_{r-i+2}$ or $1_r$ from the resulting matrix gives $F'_i$, as seen in Definition~\ref{Fi'def1}. This proves the second and the third cases of part (ii). \qed \par
See section 5.1.1 of \cite{kimWhit} for an example of row operations.

\section{Interlude: Combinatorics of the birational map}\label{sec:comb}
Before moving on to the column operations, let us derive decomposition formulas of path-polynomials, making use of their combinatorial properties. For this, we introduce the notion of \textit{products} of collections of sets of disjoint paths. For Definitions~\ref{def:calPprod} through~\ref{def:uPprod}, we let $O_1$, $O_2$, $D_1$ and $D_2$ be sets of nonzero entries of $wn$ such that $O_1$ and $O_2$ are disjoint, $D_1$ and $D_2$ are disjoint, $\abs{O_1}=\abs{D_1}$ and $\abs{O_2}=\abs{D_2}$. 
\begin{definition}\label{def:calPprod}
   For any subcollections ${\mathcal P}_1\subseteq {\mathcal P}(O_1\to D_1)$ and ${\mathcal P}_2\subseteq {\mathcal P}(O_2\to D_2)$, we define the product ${\mathcal P}_1 \otimes {\mathcal P}_2$ as
\[\begin{aligned}
 &{\mathcal P}_1\otimes {\mathcal P}_2 =\Bigl\{\{p_1,\ldots,p_a,q_1,\ldots,q_b\} \mid  \{p_1,\ldots,p_a\}\in {\mathcal P}_1, \, \{q_1,\ldots,q_b\}\in {\mathcal P}_2\Bigr\}.
\end{aligned}\] 
\end{definition}
Observe that the paths in a set $\mathbf{p}\cup \mathbf{q}\in {\mathcal P}_1\otimes {\mathcal P}_2$ are not necessarily disjoint, because some $p_i\in \mathbf{p}$ and $q_j\in\mathbf{q}$ may intersect.
\begin{definition}\label{def:calPprodintersect}
    Let ${\mathcal P}_1\subseteq {\mathcal P}(O_1\to D_1)$ and ${\mathcal P}_2\subseteq {\mathcal P}(O_2\to D_2)$. We use the subscript $\ooalign{$\swarrow$\cr\hfil$\nwarrow$\hfil}$ to denote the subcollection of the product ${\mathcal P}_1 \otimes {\mathcal P}_2$ consisting of the sets that contain paths that intersect. That is,
\[
 ({\mathcal P}_1 \otimes {\mathcal P}_2){\crarrow} =\bigl\{\{p_1,\ldots,p_a,q_1,\ldots,q_b\}\in {\mathcal P}_1 \otimes {\mathcal P}_2\mid \text{some } p_i \text{ and } q_j \text{ intersect }\bigr\}.\]
\end{definition}
Recall Definition~\ref{def:upath}. We extend the notion of $u(\mathbf{p})$ to the sets in the product ${\mathcal P}_1 \otimes {\mathcal P}_2$.
\begin{definition}\label{def:uPprod}
For $\mathbf{p}\in {\mathcal P}(O_1\to D_1)$ and $\mathbf{q}\in {\mathcal P}(O_2\to D_2)$, we let $u(\mathbf{p}\cup \mathbf{q})=u(\mathbf{p}) u(\mathbf{q})$.
\end{definition}
If $\mathbf p$ and $\mathbf q$ are disjoint, that is, if any $p\in \mathbf p$ and $q\in \mathbf q$ are disjoint, then $\mathbf p\cup \mathbf q$ is a set of disjoint paths in ${\mathcal P}(O_1\cup O_2\to D_1\cup D_2)$, and the above definition agrees with Definition~\ref{def:upath}. If $\mathbf p$ and $\mathbf q$ are not disjoint, then some $p\in \mathbf p$ and $q\in \mathbf q$ contain one or more common elements. Suppose that $n_\a\in V_w$ is both in $p$ and $q$. Since $\mathbf p$ and $\mathbf q$ are sets of disjoint paths, $n_\a$ does not appear in any of the paths in $\mathbf p$ other than $p$ and any of the paths in $\mathbf q$ other than $q$. Hence $n_\a$ appears exactly twice in the set $\mathbf p\cup \mathbf q$, once in the path $p$ and once in the path $q$, so the degree of $n_\a$ in $u(\mathbf p\cup \mathbf q)$ is two. From the above definition, it is clear that
\begin{equation}\label{uPprodsum}
\begin{aligned}
    \sum\limits_{\mathbf{p}\cup \mathbf{q}\in {\mathcal P}_1\otimes {\mathcal P}_2} u(\mathbf p \cup \mathbf q)&=\Bigl(\sum\limits_{\mathbf p\in {\mathcal P}_1}u(\mathbf p) \Bigr)\Bigl(\sum\limits_{\mathbf q\in {\mathcal P}_2} u(\mathbf q)\Bigr), 
\end{aligned}
\end{equation}
where the left hand side is summed over $\mathbf{p}\in {\mathcal P}_1$ and $\mathbf{q}\in {\mathcal P}_2$. 
\begin{definition}\label{PLstarsum}
Let $A$ and $B$ be sets of nonzero entries of $wn$ such that $A\cap B=\emptyset$ and $\abs{A}=\abs{B}$. We define
\[
  P_L^{\star}(A\to B)=\begin{cases}
  P_L(A\to B) &\text{if}\quad \abs{A}=\abs{B}\geq 1, \\
  1 &\text{if}\quad A=B=\emptyset.
  \end{cases}  
\]
\end{definition}

The following lemma gives a decomposition formula for $P_1(n_\alpha)$ defined in Definition~\ref{P1def}.
\begin{lem}[Lemma 4.25 of \cite{kimWhit}]\label{P1lem}
Let $\npdj \in V_w$. Write
$O_1(\npdj)=\{n_{\pi(\d_1),r}, \ldots 
, n_{\pi(\d_s),r}\}$, $D_1(n_{\pi(d),j})=\{1_{\pi(\delta_1)},\ldots,1_{\pi(\delta_s)}\}$. We have 
\begin{equation}\label{P1expand}
\begin{aligned}
P_1(n_{\pi(d),j})=\sum_{m=1}^{s} \  &P\bigl(\{n_{\pi(\delta_m-1),r}\}\cup O_{\uparrow,\delta_m-1}(n_{\pi(d),j})\to D_{\uparrow,\delta_m-1}(n_{\pi(d),j})\bigr) \\
&\times n_{\pi(\delta_m),r}\cdot P_L\bigl(O_{\downarrow,\delta_m+1}(n_{\pi(d),j})\to D_{\downarrow,\delta_m}(n_{\pi(d),j})\bigr).
\end{aligned}\end{equation}
Here, if $O_1(\npdj)=D_1(\npdj)=\emptyset$, then $s=0$ and the sum equals $0$.
\end{lem}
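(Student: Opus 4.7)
The plan is a bijective argument that partitions $\mathcal{P}_1(n_{\pi(d),j})$ according to a canonical index $m=m(\mathbf{p})\in\{1,\dots,s\}$ chosen from each $\mathbf{p}$. The starting observation is that an origin $n_{\pi(\delta),r}$ in the rightmost column above the bottom origin can have its path in $\mathbf{p}$ start by going up only when the entry directly above it is a non-origin free variable; another origin there (including $1_r$, which is itself an origin whenever it lies in $M_w(n_{\pi(d),j})$) would force a vertex collision. By Definition~\ref{O1} this is exactly the condition $n_{\pi(\delta),r}\in O_1(n_{\pi(d),j})$. Since membership of $\mathbf{p}$ in $\mathcal{P}'(n_{\pi(d),j})$ forces at least one such upward-starting path, I set $m(\mathbf{p})$ to be the \emph{largest} index $m$ for which the path in $\mathbf{p}$ from $n_{\pi(\delta_m),r}$ begins by going up; this gives a disjoint partition $\mathcal{P}_1(n_{\pi(d),j})=\bigsqcup_{m=1}^{s}\mathcal{P}_1^{(m)}$.

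For each $m$, the decomposition map sends $\mathbf{p}\in\mathcal{P}_1^{(m)}$ to a pair $(\text{upper},\text{lower})$. Let $p^{*}$ be the path in $\mathbf{p}$ starting at $n_{\pi(\delta_m),r}$ and let $\widetilde{p}^{*}$ be $p^{*}$ with its first vertex removed, so $\widetilde p^{*}$ starts at $n_{\pi(\delta_m-1),r}$. Put $\text{upper}=\{\widetilde{p}^{*}\}\cup\{p\in\mathbf{p}\mid p\text{ originates in }O_{\uparrow,\delta_m-1}\}$ and $\text{lower}=\{p\in\mathbf{p}\mid p\text{ originates in }O_{\downarrow,\delta_m+1}\}$. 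Since all path steps move only upward and leftward, each path from an origin in $O_{\uparrow,\delta_m-1}$ and $\widetilde{p}^{*}$ itself remains in rows $\le\delta_m-1$ and terminates in $D_{\uparrow,\delta_m-1}$. The identity $|D_{\uparrow,\delta_m-1}|=|O_{\uparrow,\delta_m-1}|+1=|\{n_{\pi(\delta_m-1),r}\}\cup O_{\uparrow,\delta_m-1}|$, where the surplus destination is the top destination $1_j$ whose row contains no origin, shows these upper paths already saturate $D_{\uparrow,\delta_m-1}$. Hence the paths of $\text{lower}$ are forced to terminate in $D_{\downarrow,\delta_m}$ and, being monotone, to remain in rows $\ge\delta_m$. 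Maximality of $m$ guarantees that every rightmost-column origin in $\text{lower}$ has its path beginning by going left, so $\text{lower}\in\mathcal{P}_L\bigl(O_{\downarrow,\delta_m+1}\to D_{\downarrow,\delta_m}\bigr)$ as required.

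The inverse map prepends the single vertex $n_{\pi(\delta_m),r}$ to the upper path starting at $n_{\pi(\delta_m-1),r}$; this vertex lies in no upper path (upper lives in rows $\le\delta_m-1$) and in no lower path (rightmost-column origins of a set in $\mathcal{P}_L$ start leftwards and never return to column $r$), so the union is again a set of disjoint paths realizing an element of $\mathcal{P}_1^{(m)}$. The weight identity $u(\mathbf{p})=u(\text{upper})\cdot n_{\pi(\delta_m),r}\cdot u(\text{lower})$ is immediate from Definition~\ref{def:upath}, and summing via \eqref{uPprodsum} over $\mathbf{p}\in\mathcal{P}_1^{(m)}$ and then over $m$ produces \eqref{P1expand}. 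The main obstacle is establishing the clean separation of paths across row $\delta_m$ -- a priori a lower-origin path might terminate in $D_{\uparrow,\delta_m-1}$ -- but the matching/cardinality count above rules this out without any intricate geometric input, so the remainder of the argument is combinatorial bookkeeping.
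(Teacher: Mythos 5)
Your argument is correct and takes essentially the same route as the paper: you partition $\mathcal{P}_1(n_{\pi(d),j})$ by the largest index $m$ for which the path from $n_{\pi(\delta_m),r}$ begins by going up, which coincides with the paper's subcollections $\mathcal{P}^{(m)}(n_{\pi(d),j})$, and then split each set across row $\delta_m$ into an upper $P$-factor (with the prepended vertex $n_{\pi(\delta_m),r}$) and a lower $P_L$-factor. Your cardinality argument $\lvert D_{\uparrow,\delta_m-1}\rvert=\lvert O_{\uparrow,\delta_m-1}\rvert+1$ makes explicit the separation of upper and lower paths across row $\delta_m$ that the paper states somewhat informally, deferring details to the thesis.
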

\begin{proof} By the remark below Definition~\ref{P2def}, the sum $P_1(\npdj)$ equals zero in case $O_1(\npdj)$ is empty. Hence, we shall assume that $s\geq 1$. For $1\leq m\leq s$, let ${\mathcal P}^{(m)}(\npdj)$ denote the subcollection of ${\mathcal P}_1(\npdj)$ consisting of the sets in which the path starting from $n_{\pi(\d_m),r}$ starts by going up, and every path whose origin is below the $\d_m$-th row starts by going left. Then we have the partition
\[{\mathcal P}_1(\npdj)=\bigsqcup_{1\leq m\leq s}  {\mathcal P}^{(m)}(\npdj).\]
Let $P^{(m)}(\npdj)=\sum\limits_{\textbf{p}\in {\mathcal P}^{(m)}(\npdj)} u(\textbf{p})$. We have $P_1(\npdj)=\sum\limits_{1\leq m\leq s} P^{(m)}(\npdj)$,
so the lemma follows once we show that
\begin{equation}\label{Pmexp}
\begin{aligned}
P^{(m)}(n_{\pi(d),j})= &n_{\pi(\delta_m),r}\cdot P\bigl(\{n_{\pi(\delta_m-1),r}\}\cup O_{\uparrow,\delta_m-1}(n_{\pi(d),j})\to D_{\uparrow,\delta_m-1}(n_{\pi(d),j})\bigr) \\
&\quad\times P_L\bigl(O_{\downarrow,\delta_m+1}(n_{\pi(d),j})\to D_{\downarrow,\delta_m}(n_{\pi(d),j})\bigr).
\end{aligned}\end{equation}
Consider $\mathbf{p}\in {\mathcal P}^{(m)}(n_{\pi(d),j})$. The path $p_0\in \mathbf{p}$ whose origin is $n_{\pi(\d_m),r}$ can be regarded as a path starting form $n_{\pi(\d_m-1),r}$ to a destination above the $(\d_m-1)$-th row, prepended by $n_{\pi(\d_m),r}$. The $P$-polynomial multiplied by $n_{\pi(d_m),r}$ in the first line of \eqref{Pmexp} represent the path $p_0$ and the paths whose origin is above $n_{\pi(\d_m),r}$, and the $P_L$-polynomial in the second line represent the paths whose origin is below the $\d_m$-th row, all of which start by going left, by the construction of ${\mathcal P}^{(m)}(n_{\pi(d),j})$. This proves $\eqref{Pmexp}$.
\end{proof}
A detailed proof of \eqref{Pmexp} can be found in \cite[Lemma 4.25]{kimWhit}. Next, we derive a formula for $R_1(\npdj)$ from Lemma~\ref{P1lem}. We use the following lemma.
\begin{lem}\label{lem:DupOup}
    Let $n_{\pi(d),j}\in V_w$, and let $a'$ satisfy $1_{\pi(a')}\in D_1(n_{\pi(d),j})$. Then $n_{\pi(a'-1),j}$ is either a free variable or equal to $1_j$, and we have
\[
D_{\uparrow,a'-1}(n_{\pi(d),j})=
\begin{cases}
D(n_{\pi(a'-1),j}) & \text{if } n_{\pi(a'-1),j}\in V_w \\
\{1_j\} & \text{if } n_{\pi(a'-1),j}=1_j,
\end{cases}
\]
and
\[
\{n_{\pi(a'-1),r}\}\cup O_{\uparrow,a'-1}(n_{\pi(d),j})=
\begin{cases}
O(n_{\pi(a'-1),j}) & \text{if } n_{\pi(a'-1),j}\in V_w \\
\{ n_{j,r} \} & \text{if } n_{\pi(a'-1),j}=1_j.
\end{cases}
\]
\end{lem}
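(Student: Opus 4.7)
The plan is to translate the hypothesis $1_{\pi(a')}\in D_1(n_{\pi(d),j})$ into structural constraints on rows $a'-1$ and $a'$ of $wn$, and then read off the shape of $n_{\pi(a'-1),j}$ together with the two set identities by comparing the defining sets directly.

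First I would unwind Definitions~\ref{O1} and~\ref{D1}: the hypothesis produces an origin $n_{\pi(a'),r}\in O_1(n_{\pi(d),j})$ with $a'<d$, and crucially $n_{\pi(a'-1),r}\in V_w\smallsetminus O(n_{\pi(d),j})$. In particular $n_{\pi(a'-1),r}$ is a free variable, which forces $a'-1>\pi\i(r)$, so that the rightmost nonzero entry of row $a'-1$ is indeed $n_{\pi(a'-1),r}$. Since Definition~\ref{Odef} explicitly excludes the row of the top destination $1_j$ from contributing an origin, no element of $O_1(n_{\pi(d),j})$ can sit in row $\pi\i(j)$; this rules out $1_{\pi(a')}=1_j$ and yields $a'-1\geq \pi\i(j)$.

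Next I would determine the form of $n_{\pi(a'-1),j}$. Given $a'-1\geq \pi\i(j)$, the three a priori possibilities for the entry in row $a'-1$ and column $j$ are $1_j$ (when $a'-1=\pi\i(j)$), a free variable (when $\pi\i(j)<a'-1$ and $\pi(a'-1)<j$), and zero (when $\pi\i(j)<a'-1$ and $\pi(a'-1)>j$). The zero case is the obstacle I need to rule out. In that case $1_{\pi(a'-1)}$ is a destination of $n_{\pi(d),j}$ distinct from $1_j$, and so by Definition~\ref{Odef} the rightmost nonzero entry of row $a'-1$, namely $n_{\pi(a'-1),r}$, must lie in $O(n_{\pi(d),j})$, contradicting $n_{\pi(a'-1),r}\in V_w\smallsetminus O(n_{\pi(d),j})$. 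This leaves the two cases stated in the lemma.

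The two set identities then follow by direct comparison of $M_w(n_{\pi(a'-1),j})$ with the truncation of $M_w(n_{\pi(d),j})$ to rows $\pi\i(j),\ldots,a'-1$. In the case $n_{\pi(a'-1),j}=1_j$, the truncation collapses to a single row, which contains only the top destination $1_j$ and, since $a'-1<d$, contributes no origin; adjoining $n_{\pi(a'-1),r}=n_{j,r}$ produces $\{n_{j,r}\}$ as required. In the case $n_{\pi(a'-1),j}\in V_w$, the destinations of the two matrices coincide row-for-row, and the origin sets agree once one observes that row $a'-1$ is the bottom row of $M_w(n_{\pi(a'-1),j})$, automatically contributing $n_{\pi(a'-1),r}$ to $O(n_{\pi(a'-1),j})$, whereas in $M_w(n_{\pi(d),j})$ this row carries no destination (we have $\pi(a'-1)<j$) and hence contributes no origin to $O_{\uparrow,a'-1}(n_{\pi(d),j})$. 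The main obstacle is the careful bookkeeping that eliminates the zero case; once that is done the remainder is a definition chase.
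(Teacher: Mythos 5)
Your proof is correct and follows essentially the same route as the paper's: deduce $\pi^{-1}(j)<a'$ from the fact that $1_{\pi(a')}\in D_1(n_{\pi(d),j})$ cannot be the top destination, split into $a'-1=\pi^{-1}(j)$ and $a'-1>\pi^{-1}(j)$, use $n_{\pi(a'-1),r}\notin O(n_{\pi(d),j})$ to force $\pi(a'-1)<j$ in the second case, and then read off the set identities by comparing $M_w(n_{\pi(a'-1),j})$ with the top part of $M_w(n_{\pi(d),j})$. You spell out a couple of points the paper leaves implicit (that $n_{\pi(a'-1),r}\in V_w$ forces $a'-1>\pi^{-1}(r)$ so that it really is the rightmost nonzero entry, and the explicit contradiction ruling out $n_{\pi(a'-1),j}=0$), but the underlying argument is the same.
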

\begin{proof}
    If $1_{\pi(a')}\in D_1(n_{\pi(d),j})$, then by Definition~\ref{D1}, $1_{\pi(a')}$ is not the top destination $1_j$ of $n_{\pi(d),j}$, thus $\pi\i(j)<a'\leq d$. If $a'-1=\pi\i(j)$ then $n_{\pi(a'-1),j}=1_j$, and if $a'-1>\pi\i(j)$ then the fact that $n_{\pi(a'-1),r}$ is not an origin of $n_{\pi(d),j}$ implies that $1_{\pi(a'-1)}$ is to the left of the $j$-th column, hence $\pi(a'-1)<j$. It follows that $n_{\pi(a'-1),j}\in V_w$. In either case, the lemma is straightforward.
\end{proof}

\begin{cor}\label{R1lem}
With the same hypotheses given in Lemma~\ref{P1lem}, the rational function $R_1(n_{\pi(d),j})$ satisfies 
\[
R_1(n_{\pi(d),j})=\sum_{m=1}^{s} (-1)^{t_m} \cdot n_{\pi(\delta_m),r} \cdot u_{\pi(\d_m-1),j}\cdot \frac{P_L\bigl(O_{\downarrow,\delta_m+1}(n_{\pi(d),j})\to D_{\downarrow,\delta_m}(n_{\pi(d),j})\bigr)}{\prod\limits_{1_\mu\in D_{\downarrow,\delta_m}(n_{\pi(d),j})}\rho(1_\mu)},
\]
where $t_{m}=\abs{D_{\downarrow,\delta_m}(n_{\pi(d),j})}$.
\end{cor}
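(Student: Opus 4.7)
The plan is to derive the corollary by substituting the factorization from Lemma~\ref{P1lem} into the definition of $R_1$ in Definition~\ref{def:RLR1R2}, then using Lemma~\ref{lem:DupOup} to recognize the ``upper'' factor of each summand as $\pm u_{\pi(\delta_m-1),j}$ multiplied by an appropriate piece of the denominator. Concretely, I would begin with
\[
R_1(n_{\pi(d),j})=(-1)^{t_\a}\frac{P_1(n_{\pi(d),j})}{\prod\limits_{1_\mu\in D(n_{\pi(d),j})}\rho(1_\mu)},\qquad t_\a=\abs{D(n_{\pi(d),j})}-1,
\]
insert the expansion of $P_1(n_{\pi(d),j})$ from Lemma~\ref{P1lem}, and split the denominator as
\[
\prod_{1_\mu\in D(n_{\pi(d),j})}\rho(1_\mu)=\prod_{1_\mu\in D_{\uparrow,\delta_m-1}(n_{\pi(d),j})}\rho(1_\mu)\cdot\prod_{1_\mu\in D_{\downarrow,\delta_m}(n_{\pi(d),j})}\rho(1_\mu),
\]
which is valid since the two index sets are disjoint and exhaust $D(n_{\pi(d),j})$.

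Next I would analyze each summand in Lemma~\ref{P1lem} using Lemma~\ref{lem:DupOup}, which forces $n_{\pi(\delta_m-1),j}$ to be either a free variable or the entry $1_j$. In the first case the numerator factor $P\bigl(\{n_{\pi(\delta_m-1),r}\}\cup O_{\uparrow,\delta_m-1}\to D_{\uparrow,\delta_m-1}\bigr)$ is exactly $P(n_{\pi(\delta_m-1),j})$, so dividing by $\prod_{1_\mu\in D_{\uparrow,\delta_m-1}}\rho(1_\mu)$ and inserting the correct sign gives $(-1)^{|D_{\uparrow,\delta_m-1}|-1}\cdot u_{\pi(\delta_m-1),j}$ by Definition~\ref{def:Rformula}. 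In the second case, where $\pi(\delta_m-1)=j$, the relevant path-polynomial is $P(\{n_{j,r}\}\to\{1_j\})$, which is just the product of nonzero entries in the row of $1_j$, namely $\rho(1_j)$; hence the ratio equals $1$, which agrees with $u_{\pi(\delta_m-1),j}=u_{j,j}=1$, and the sign $(-1)^{|D_{\uparrow,\delta_m-1}|-1}=(-1)^0=1$ is trivially correct.

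Finally I would collect the signs. After substitution, each summand carries $(-1)^{t_\a}\cdot(-1)^{\abs{D_{\uparrow,\delta_m-1}}-1}$, and since $t_\a=\abs{D_{\uparrow,\delta_m-1}}+\abs{D_{\downarrow,\delta_m}}-1$ the total sign reduces modulo $2$ to $(-1)^{\abs{D_{\downarrow,\delta_m}}}=(-1)^{t_m}$, matching the statement. Combining with the unchanged factors $n_{\pi(\delta_m),r}$ and $P_L\bigl(O_{\downarrow,\delta_m+1}(n_{\pi(d),j})\to D_{\downarrow,\delta_m}(n_{\pi(d),j})\bigr)$ (divided by the remaining piece of the denominator) yields the desired identity.

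There is no serious obstacle here; the corollary is essentially a bookkeeping consequence of Lemma~\ref{P1lem} once one recognizes the ``upper'' path-polynomial as $R(n_{\pi(\delta_m-1),j})$. The only delicate point is the boundary case $n_{\pi(\delta_m-1),j}=1_j$, where one has to verify by hand that the single rightward path along the row contributes exactly $\rho(1_j)$ so that the quotient is $1$; handling this uniformly with the generic case via Lemma~\ref{lem:DupOup} is what makes the formula clean.
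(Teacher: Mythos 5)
Your proposal is correct and follows essentially the same route as the paper: substitute the expansion from Lemma~\ref{P1lem} into Definition~\ref{def:RLR1R2}, split the product $\prod\rho(1_\mu)$ over the partition $D_{\uparrow,\delta_m-1}\sqcup D_{\downarrow,\delta_m}$, apply Lemma~\ref{lem:DupOup} to recognize the upper factor as $(-1)^{\abs{D_{\uparrow,\delta_m-1}}-1}u_{\pi(\delta_m-1),j}$, and recombine the signs via $t_{\pi(d),j}=t_{\pi(\delta_m-1),j}+t_m$. The paper treats the boundary case $n_{\pi(\delta_m-1),j}=1_j$ more tersely (folding it into the appeal to Lemma~\ref{lem:DupOup}), whereas you verify it directly; both are fine.
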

\begin{proof}
    Since $D_{\uparrow,\delta_m-1}(n_{\pi(d),j})$ and $D_{\downarrow,\delta_m}(n_{\pi(d),j})$ partition $D(\npdj)$, we have
\begin{equation}\label{P1npdjprod}
\prod\limits_{1_\mu\in D(\npdj)}\rho(1_\mu)=\prod\limits_{1_\mu\in D_{\uparrow,\delta_m-1}(n_{\pi(d),j})}\rho(1_\mu)\cdot \prod\limits_{1_\mu\in D_{\downarrow,\delta_m}(n_{\pi(d),j})}\rho(1_\mu). 
\end{equation}
Let $t_{\pi(d),j}=\abs{D(\npdj)}-1$ and $t_{\pi(\d_m-1),j}=\abs{D_{\uparrow,\delta_m-1}(n_{\pi(d),j})}-1$, so that we have $t_{\pi(d),j}=t_{\pi(\d_m-1),j}+t_m$.
Dividing both sides of \eqref{P1expand} by $(-1)^{t_{\pi(d),j}}\prod\limits_{1_\mu\in D(\npdj)}\rho(1_\mu)$ and then applying Lemma~\ref{lem:DupOup} and \eqref{P1npdjprod} yields the corollary.
\end{proof}
Our next lemma describes a decomposition formula for the polynomial $P_L(O\to D)$. Fix a Weyl group element $w\in \Omega$ of $\GL(r)$.
\begin{definition}\label{def:PLdecomp}
Let $D=\{1_{l(1)}, 1_{l(2)}, \ldots,  1_{l(m)}\}$, $l(1)<l(2)<\cdots<l(m)$ be a set of entries of $1$ in $wn$, listed in the order as they appear in the matrix from left to right. For a positive integer $d\leq r$, we say that a set $D$ of entries of $1$ in $wn$ is ``$P_L$-decomposable set with respect to $d$" if $\pi\i\bigl(l(j)\bigr)<d$ and $\pi(d)<l(j)$ for all $1\leq j\leq m$, that is, if every element of $D$ is above and to the right of $1_{\pi(d)}$.
\end{definition}
\noindent For the next definition, recall Definition~\ref{def:gamma1}.
\begin{definition}\label{def:PLdecomporigin}
Let $D=\{1_{l(1)}, 1_{l(2)}, \ldots,  1_{l(m)}\}$ be a $P_L$ decomposition set with respect to $d$. Let $1\leq h\leq m$ be the positive integer such that $1_{l(h)}$ is the element of $D$ in the highest row among all the rows containing an element of $D$. We define the set $O$ of ``origins for a $P_L$-decomposable set $D$ with respect to $d$'' to be the set
\begin{equation}\label{def:PLdecompO}
\{\gamma(1_{\pi(d)})\}\cup \gamma(D)\smallsetminus \{\gamma(1_{l(h)})\},\end{equation}  
that is, $O$ consists of $\gamma(1_{\pi(d)})$, and the rightmost entries of the rows containing an element of $D$ other than $1_{l(h)}$, the highest element of $D$.
\end{definition}
Since $\pi(d)\neq l(j)$ for all $1\leq j\leq m$, $O$ consists of $m$ elements, the same as $D$. A $P_L$-decomposable set $D$ with respect to $d$ and its corresponding set $O$ are defined in a way that mimics the configuration of $D(n_\a)$ and $O(n_\a)$ for a free variable $n_\a$ of $wn$: the lowest element $\gamma(1_{\pi(d)})$ of $O$ is below every element of $D$, the highest element $1_{l(h)}$ of $D$ is above every element of $O$, and each element of $O\smallsetminus \{\gamma(1_{\pi(d)})\}$ is paired with an element of $D\smallsetminus \{1_{l(h)}\}$ in the same row.\par 
As in Definition~\ref{ODup}, we denote $O_{\downarrow, k}$ and $D_{\downarrow,k}$ the subsets of $O$ and $D$ consisting of the elements which are in or below the $k$-th row of $wn$, respectively, and $O_{\uparrow, k}$ and $D_{\uparrow, k}$ the subsets of $O$ and $D$ consisting of the elements which are in or above the $k$-th row of $wn$, respectively. The following lemma is the Lemma 4.45 of \cite{kimWhit} with minor modifications.
\begin{lem}\label{lem:firststep}
Let $D=\{1_{l(1)},1_{l(2)},\ldots,1_{l(m)}\}$ be a $P_L$-decomposable set with respect to a positive integer $d$ with $1\leq d\leq r$ and let $O$ defined as in \eqref{def:PLdecompO}. If the leftmost entry of $D$ is not the highest entry of $D$, that is, if $h>1$, then
\begin{equation}\label{firstexp}
\begin{aligned}
P_L(O\to D)&= P_L\bigl(O_{\downarrow,\pi^{-1}(l(1))+1}\to D_{\downarrow,\pi^{-1}(l(1))}\bigr) \cdot  P_L\bigl(O_{\uparrow,\pi^{-1}(l(1))}\to D_{\uparrow,\pi^{-1}(l(1))-1}\bigr) \\ 
 &\quad\quad\quad\quad\quad\quad-\rho(1_{l(1)})\cdot P_L\bigl(O\smallsetminus \{\gamma(1_{l(1)})\}\to D\smallsetminus \{1_{l(1)}\}\bigr).
\end{aligned}
\end{equation}
\end{lem}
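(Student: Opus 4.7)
Plan: Write $\rho:=\pi^{-1}(l(1))$ for the row of $1_{l(1)}$. I would begin by expanding the product on the right, via Definition~\ref{def:calPprod} and \eqref{uPprodsum}, as
\[
P_L(O_{\downarrow,\rho+1}\to D_{\downarrow,\rho})\cdot P_L(O_{\uparrow,\rho}\to D_{\uparrow,\rho-1})=X_{\mathrm{disj}}+X_{\mathrm{int}},
\]
where $X_{\mathrm{disj}}$ (resp.\ $X_{\mathrm{int}}$) sums over pairs $(\mathbf q_1,\mathbf q_2)$ that are path-disjoint (resp.\ share at least one common entry). Under the hypothesis $h>1$, the origin $\gamma(1_{l(1)})$ lies in $O_{\uparrow,\rho}$ while every other origin in $O_{\uparrow,\rho}$ lies strictly above row $\rho$ and every origin in $O_{\downarrow,\rho+1}$ lies strictly below row $\rho$. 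Because paths move only upward and leftward, the only path in $\mathbf q_1$ that can touch row $\rho$ is the one terminating at $1_{l(1)}\in D_{\downarrow,\rho}$, and the only path in $\mathbf q_2$ that can touch row $\rho$ is the one emanating from $\gamma(1_{l(1)})$. Hence every intersection is concentrated in row $\rho$ and involves only these two distinguished paths, which I call $q_1^\ast$ and $q_2^\ast$.

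The plan for the disjoint part is to show $X_{\mathrm{disj}}=P_L(O\to D)$. The forward map $(\mathbf q_1,\mathbf q_2)\mapsto \mathbf q_1\cup\mathbf q_2$ lands in $\mathcal P_L(O\to D)$ with the $\gamma(1_{l(1)})$-path ending in $D_{\uparrow,\rho-1}$, and the size equalities $|O_{\downarrow,\rho+1}|=|D_{\downarrow,\rho}|$ and $|O_{\uparrow,\rho}|=|D_{\uparrow,\rho-1}|$ (which use $h>1$ via $\pi^{-1}(l(h))<\rho$) force any $\mathbf p\in\mathcal P_L(O\to D)$ whose $\gamma(1_{l(1)})$-path does not terminate at $1_{l(1)}$ to split uniquely in this fashion. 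I would rule out the ``Case A'' exception (where the $\gamma(1_{l(1)})$-path \emph{does} terminate at $1_{l(1)}$) by contradiction: such a path would be forced to traverse every nonzero entry of row $\rho$ (it is the unique up-and-left path from the rightmost to the leftmost nonzero entry of its row), so the remaining paths would form a configuration in $\mathcal P_L(O\smallsetminus\{\gamma(1_{l(1)})\}\to D\smallsetminus\{1_{l(1)}\})$ that avoids row $\rho$ altogether. A row count forces this configuration to contain a cross-path from $O_{\downarrow,\rho+1}$ to $D_{\uparrow,\rho-1}$, whose transit column $c^\ast$ (the column at which it jumps from row $>\rho$ to row $<\rho$) must satisfy $c^\ast\geq l(1)$ and $(\rho,c^\ast)=0$. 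The case $c^\ast=l(1)$ fails because $(\rho,l(1))=1_{l(1)}\ne 0$; the case $c^\ast>l(1)$ with $\pi^{-1}(c^\ast)>\rho$ fails because column $c^\ast$ then has all its nonzero entries at rows $\geq\pi^{-1}(c^\ast)>\rho$, making any upward transit through row $\rho$ impossible.

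For the intersecting pairs I would apply a tail-swap at the rightmost column $c_1$ of overlap between $q_1^\ast$ and $q_2^\ast$ in row $\rho$. The swapped $\tilde q_2$ traverses every nonzero entry of row $\rho$ from $\gamma(1_{l(1)})$ to $1_{l(1)}$ and contributes the factor $\rho(1_{l(1)})$, while $\tilde q_1$ runs from the original origin $o_1$ of $q_1^\ast$ up to $(\rho,c_1)$, then left along row $\rho$ to the column $c_2$ at which $q_2^\ast$ originally departed upward, then up to the original destination $d_2$ of $q_2^\ast$. A direct weight tally gives $u(\tilde q_1)u(\tilde q_2)=u(q_1^\ast)u(q_2^\ast)$, and $\tilde q_1$ together with the remaining paths of $\mathbf q_1$ and $\mathbf q_2$ assembles into a disjoint system $\mathbf p'\in\mathcal P_L(O\smallsetminus\{\gamma(1_{l(1)})\}\to D\smallsetminus\{1_{l(1)}\})$. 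The reverse swap is well defined because the unique cross-path of any such $\mathbf p'$ must traverse row $\rho$ by the structural argument above, so $X_{\mathrm{int}}=\rho(1_{l(1)})\cdot P_L(O\smallsetminus\{\gamma(1_{l(1)})\}\to D\smallsetminus\{1_{l(1)}\})$. Rearranging $X_{\mathrm{disj}}+X_{\mathrm{int}}$ yields the identity. The main obstacle is the structural sub-claim that every such cross-path uses row $\rho$: it both eliminates Case A from $\mathcal P_L(O\to D)$ and ensures invertibility of the tail-swap.
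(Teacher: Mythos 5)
Your proposal is correct and follows essentially the same route as the paper's proof: both decompose the product $P_L(\downarrow,1)\cdot P_L(\uparrow,1)$ into disjoint and intersecting contributions, identify the disjoint part with $P_L(O\to D)$ via the splitting at row $\rho=\pi^{-1}(l(1))$ (ruling out what you call ``Case A'' by the cross-path contradiction), and handle the intersecting part by a tail-swap at the rightmost column of overlap, which is precisely the paper's choice of the greatest $c$ with $n_{l(1),c}$ in both paths. You are somewhat more explicit than the paper about the column-by-column transit analysis showing the cross-path cannot skip row $\rho$; the paper compresses that step into ``an elementary counting argument.'' The only small point worth tightening is your transit-column case split: having asserted $c^*\geq l(1)$ and $(\rho,c^*)=0$, you treat $c^*=l(1)$ and then ``$c^*>l(1)$ with $\pi^{-1}(c^*)>\rho$'' as though other subcases remain, whereas you should note that $(\rho,c^*)=0$ together with $c^*>l(1)$ already forces $\pi^{-1}(c^*)>\rho$ (since $\pi^{-1}(c^*)<\rho$ would make $(\rho,c^*)=n_{l(1),c^*}\in V_w$ nonzero), so that is in fact the only remaining subcase.
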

\begin{proof} First, we claim that every set in ${\mathcal P}_L(O\to D)$ is contained in the product
\begin{equation}\label{calPLprod}{\mathcal P}_L(O_{\downarrow,\pi^{-1}(l(1))+1}\to D_{\downarrow,\pi^{-1}(l(1))}) \otimes {\mathcal P}_L(O_{\uparrow,\pi^{-1}(l(1))}\to D_{\uparrow,\pi^{-1}(l(1))-1}).\end{equation}
If $\mathbf{p}\in {\mathcal P}_L(O\to D)$ contains a horizontal path $p_0$ connecting $1_{l(1)}$ and $\gamma(1_{l(1)})$, then any path $p\in \mathbf{p}$ whose origin is below $\gamma(1_{l(1)})$ cannot have its destination above $1_{l(1)}$, because it forces $p$ to go across $p_0$, because $1_{l(1)}$ is located in the leftmost column among every element of $D$. An elementary counting argument shows that this is impossible, hence the path in $\mathbf{p}$ starting from $\gamma(1_{l(1)})$ must end above the $\pi\i(l(1))$-th row. From this, the claim follows. \par 
However, the product \eqref{calPLprod} is not necessarily equal to ${\mathcal P}_L(O\to D)$, because a set in the first collection and a set in the second collection may not be disjoint. We must exclude the sets in the product of the two collections which contain two paths that intersect to obtain ${\mathcal P}_L(O\to D)$. To simplify notations, denote the first and the second collections in the product \eqref{calPLprod} ${\mathcal P}_L(\downarrow,1)$ and ${\mathcal P}_L(\uparrow,1)$, respectively, and let
$P_L(\downarrow,1)=\sum_{\mathbf p\in {\mathcal P}_L(\downarrow,1)}u(\mathbf p)$ and $P_L(\uparrow,1)=\sum_{\mathbf p\in {\mathcal P}_L(\uparrow,1)}u(\mathbf p)$. We have
\begin{equation}\label{auxlemint}
 {\mathcal P}_L(O\to D) =({\mathcal P}_L(\downarrow,1) \otimes {\mathcal P}_L(\uparrow,1))\smallsetminus ({\mathcal P}_L(\downarrow,1) \otimes {\mathcal P}_L(\uparrow,1))\crarrow.
\end{equation}
Since $1_{l(h)}$ is above and to the right of $1_{l(1)}$, we have $n_{l(1),l(h)}\in V_w$. Hence, by Definition~\ref{def:gamma1}, $\gamma(1_{l(b)})=n_{l(1),b}\in V_w$ for some $b\geq l(h)$. Let
${\mathcal A}=({\mathcal P}_L(\downarrow,1) \otimes {\mathcal P}_L(\uparrow,1))\crarrow$ and \[\mathcal{B}= {\mathcal P}_L(O\smallsetminus \{n_{l(1),b}
\}\to D\smallsetminus \{1_{l(1)}\})\otimes {\mathcal P}_L (\{n_{l(1),b}\}\to \{1_{l(1)}\}).\]
We claim that the sets in $\mathcal A$ are in one-to-one correspondence with the sets in $\mathcal B$. First, observe that since $n_{l(1),b}$ and $1_{l(1)}$ are in the same row, there is only one way of connecting $n_{l(1),b}$ and $1_{l(1)}$, which is to proceed horizontally to the left. Hence ${\mathcal P}_L\bigl(\{n_{l(1),b}\}\to \{1_{l(1)}\}\bigr)$ contains a singleton set, whose element is the horizontal path. Next, let $\mathbf p=\{p_1,\ldots,p_a\}\in {\mathcal P}_L(\downarrow,1)$, $\mathbf q=\{q_1,\ldots,q_b\}\in {\mathcal P}_L(\uparrow,1)$, and suppose that
\begin{equation}\label{eq:pqintersect}
\mathbf p\cup \mathbf q=\{p_1,\ldots,p_a,q_1,\ldots,q_b\}\in \bigl({\mathcal P}_L(\downarrow,1) \otimes {\mathcal P}_L(\uparrow,1)\bigr)\crarrow,\end{equation}
that is, the two sets $\mathbf p$ and $\mathbf q$ are not disjoint. Since $1_{l(1)}$ is the highest destination of ${\mathcal P}_L(\downarrow,1)$, there is exactly one path in $\mathbf p$ which passes through any of the nonzero entries of $\pi\i(l(1))$-th row, and it is the path whose destination is $1_{l(1)}$. All the other paths in $\mathbf p$ stay below the $\pi\i(l(1))$-th row. Similarly, there is exactly one path in $\mathbf q$ which passes through any of the nonzero entries of $\pi\i(l(1))$-th row, and it is the path whose origin is $n_{l(1),b}$. All other paths in $\mathbf q$ stay above the $\pi\i(l(1))$-th row. Therefore, any intersection of a path in $\mathbf p$ and a path in $\mathbf q$ must occur in the $\pi\i(l(1))$-th row of $wn$. Since there is exactly one path in $\mathbf{p}$ and one path in $\mathbf q$ which go through the $\pi\i(l(1))$-th row, they are the only two paths that intersect. \par 
Let $p_i\in \mathbf p$ and $q_j\in \mathbf q$ be these two intersecting paths. Then the destination of $p_i$ is $1_{l(1)}$, and the origin of $q_j$ is $n_{l(1),b}$. The two paths intersect at one or more nonzero entries in the $\pi\i(l(1))$-th row. Recall that $1_{l(1)}$ is the leftmost element of $D$. Recall that $\mathbf q$ is a set in ${\mathcal P}_L(\uparrow,1)$. Since the destination of $q_j$ is above the $(\pi\i(l(1)))$-th row, $p_i$ and $q_j$ do not intersect at $1_{l(1)}$. Hence $p_i$ and $q_j$ intersect at free variables. Let $c$ be the greatest positive integer such that $n_{l(1),c}\in V_w$ and $n_{l(1),c}$ is both in $p_i$ and $q_j$, that is, $n_{l(1),c}$ is the rightmost free variable where $p_i$ and $q_j$ intersect. We split $p_i$ into two subpaths, one from the origin of $p_i$ to $n_{l(1),c}$, and the other from $n_{l(1),c}$ to the destination $1_{l(1)}$. We call these two subpaths of $p_i$ the ``head'' and ``tail'' of $p_i$, respectively. Similarly, we split $q_j$ into two subpaths, one from $n_{l(1),b}$ to $n_{l(1),c}$, and the other from $n_{l(1),c}$ to the destination of $q_j$. We call these subpaths the ``head'' and the ``tail'' of $q_j$, respectively.
\par
We interchange the tails of $p_i$ and $q_j$ to form two new paths. The path which consists of the head of $q_j$ and the tail of $p_i$ starts from $n_{l(1),b}$, proceeds horizontally to $n_{l(1),c}$, and then again horizontally to $1_{l(1)}$. Hence this is the unique horizontal path which connects $n_{l(1),b}$ to $1_{l(1)}$. The other path, which consists of the head of $p_i$ and the tail of $q_j$ starts from the origin of $p_i$, proceeds to $n_{l(1),c}$, and then reaches the destination of $q_j$. Denote the two new paths by $p^\star$ and $s_{ij}$, respectively. Denote the two new paths by $p^\star$ and $s_{ij}$, respectively. Then $\{p^\star\}$ is in ${\mathcal P}_L(\{n_{l(1),b}\}\to \{1_{l(1)}\})$. Let 
    $\mathbf{h}=\{p_1,\ldots,p_{i-1},p_{i+1},\ldots,p_a,q_1,\ldots,q_{j-1},q_{j+1},\ldots,q_b,s_{ij}\}$. Then $\mathbf{h}$ is in 
${\mathcal P}_L\bigl(O\smallsetminus \{n_{l(1),b}
\}\to D\smallsetminus \{1_{l(1)}\}\bigr)$, consequently
$\mathbf{h}\cup \{p^\star\}$ is in $\mathcal B$. This establishes the correspondence from $\mathcal A$ to $\mathcal B$. \par
The other direction of correspondence can be shown similarly. Let $\mathbf s=\{s_1,\ldots,s_k\}$ be a set in ${\mathcal P}_L(O\smallsetminus \{n_{l(1),b}
\}\to D\smallsetminus \{1_{l(1)}\})$, and let $p^\star$ be the path from $n_{l(1),b}$ to $1_{l(1)}$, which is necessarily horizontal. As before, it is straightforward to verify that there is a unique path $s_t$ that intersects with $p^\star$. We can fix a point of intersection $n_{l(1),c}\in V_w$, with $c$ maximal, and swap the tails of $s_t$ and the tails of $p^\star$ exactly as before, and replacing $s_t$ and $p^\star$ with these two new paths. We can regroup the paths into two sets, so that their union is in the product $\bigl({\mathcal P}_L(\downarrow,1) \otimes {\mathcal P}_L(\uparrow,1)\bigr)\crarrow$. This establishes the correspondence from $\mathcal B$ to $\mathcal A$. Since both directions are given by tail-swapping and the choice of tails is unique due to the maximality of $c$, the two constructions are inverse to each other. This establishes the one-to-one correspondence between $\mathcal A$ and $\mathcal B$. \par
If $\mathbf p\cup \mathbf q$ in $\mathcal A$ and 
$\mathbf s \cup \{p^\star\}$ in $\mathcal B$ correspond to each other, then
$u(\mathbf p\cup\mathbf q)=u(\mathbf s\cup\{p^\star\})=u(\mathbf s)\rho(1_{l(1)})$.
It follows that the sum 
$\sum\limits_{\mathbf p\cup \mathbf q\in \bigl({\mathcal P}_L(\downarrow,1) \otimes {\mathcal P}_L(\uparrow,1)\bigr)\crarrow} u(\mathbf p\cup \mathbf q)$, where $\mathbf p$ and $\mathbf q$ are elements of ${\mathcal P}_L(\downarrow,1)$ and $ {\mathcal P}_L(\uparrow,1)$, respectively, equals $P_L\bigl(O\smallsetminus \{n_{l(1),b}
\}\to D\smallsetminus \{1_{l(1)}\}\bigr) \cdot \rho(1_{l(1)})$. Together with \eqref{uPprodsum} and \eqref{auxlemint}, this yields \eqref{firstexp}.
\end{proof}
\begin{cor}[Lemma 4.75 of \cite{kimWhit}]\label{auxlem}
If $D$ is a $P_L$ decomposable set with respect to $d$ defined and $O$ is the set defined as in (\ref{def:PLdecompO}), the polynomial $P_L(O\to D)$ equals the sum
\begin{equation}\label{altexpansion}
    \begin{aligned}
&\sum_{q=1}^{h} \Bigr((-1)^{q-1}\cdot \prod_{1\leq q_1<q} \rho(1_{l(q_1)}) \\ &\quad\quad\quad \times P_L\bigl(O_{\downarrow, \pi^{-1}\left(l(q)\right)+1}\smallsetminus \bigcup_{1\leq q_1<q}\{\gamma(1_{l(q_1)})\}\to D_{\downarrow, \pi^{-1}\left(l(q)\right)}\smallsetminus \bigcup_{1\leq q_1<q}\{1_{l(q_1)}\}\bigr) \\
&\quad\quad\quad \times P_L^\star \bigl(O_{\uparrow, \pi^{-1}\left(l(q)\right)}\smallsetminus \bigcup_{1\leq q_1<q}\{\gamma(1_{l(q_1)})\}\to D_{\uparrow, \pi^{-1}\left(l(q)\right)-1}\smallsetminus \bigcup_{1\leq q_1<q}\{1_{l(q_1)}\}\bigr)\Bigr).
    \end{aligned}
\end{equation}
\end{cor}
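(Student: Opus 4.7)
The plan is to prove Corollary~\ref{auxlem} by induction on $h$, the position (counted from the left) of the highest element $1_{l(h)}$ of $D$. The corollary is really just the iterated form of Lemma~\ref{lem:firststep}: each application of that lemma peels off one leftmost destination $1_{l(q)}$ from $D$, contributing a $-\rho(1_{l(q)})$ factor, until after $h-1$ peels we have exhausted all destinations lying strictly to the left of (and strictly below) the highest one.

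For the base case $h=1$, the leftmost element of $D$ is also the highest. Here the sum \eqref{altexpansion} collapses to its $q=1$ term. I check directly that $D_{\uparrow,\pi^{-1}(l(1))-1}$ and $O_{\uparrow,\pi^{-1}(l(1))}$ are both empty: every other $1_{l(j)}$ lies strictly below row $\pi^{-1}(l(1))$, so $\gamma(1_{l(j)})$ also lies strictly below that row; the bottom origin $\gamma(1_{\pi(d)})$ lies in row $d > \pi^{-1}(l(j))$, hence also strictly below row $\pi^{-1}(l(1))$; and $\gamma(1_{l(1)}) = \gamma(1_{l(h)})$ is excluded from $O$ by its definition. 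Thus $P_L^\star(\emptyset\to\emptyset)=1$ by Definition~\ref{PLstarsum}, while $D_{\downarrow,\pi^{-1}(l(1))} = D$ and $O_{\downarrow,\pi^{-1}(l(1))+1}=O$, and the $q=1$ term is simply $P_L(O\to D)$, as required.

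For the inductive step $h>1$, I apply Lemma~\ref{lem:firststep}. The first summand in \eqref{firstexp} is exactly the $q=1$ term of \eqref{altexpansion} (since $P_L^\star=P_L$ in view of $1_{l(h)}\in D_{\uparrow,\pi^{-1}(l(1))-1}$). To the second summand $-\rho(1_{l(1)})\cdot P_L\bigl(O\smallsetminus\{\gamma(1_{l(1)})\}\to D\smallsetminus\{1_{l(1)}\}\bigr)$, I apply the inductive hypothesis: the reduced destination set $D' = D\smallsetminus \{1_{l(1)}\}$ remains $P_L$-decomposable with respect to the same $d$, its highest element is still $1_{l(h)}$ so its $h$-parameter drops to $h-1$, and the associated origin set coincides with $O\smallsetminus\{\gamma(1_{l(1)})\}$ by \eqref{def:PLdecompO}. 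Relabeling $1_{l'(j)}=1_{l(j+1)}$ and setting $q=q'+1$ turns the IH sum $\sum_{q'=1}^{h-1}$ into $\sum_{q=2}^{h}$, while multiplying by $-\rho(1_{l(1)})$ absorbs that factor into the leading product $\prod_{1\leq q_1<q}\rho(1_{l(q_1)})$ and absorbs the extra sign into $(-1)^{q-1}$. Combining this with the $q=1$ piece from the first summand produces the entire sum in \eqref{altexpansion}.

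The only nontrivial bookkeeping—which is the step I expect to require the most care—is checking that, at each level of the iteration, the restricted sets $O_{\downarrow,\pi^{-1}(l(q))+1}\smallsetminus\bigcup_{q_1<q}\{\gamma(1_{l(q_1)})\}$ (and its up/$D$-analogues) that appear in the IH applied to $(D',O')$ are literally equal to the corresponding sets with respect to the original $(D,O)$. This comes down to the observation that removing $\gamma(1_{l(1)})$ from $O$ (the only change at one iteration) simply accumulates under the union $\bigcup_{q_1<q}$ as $q$ advances, and that the row index $\pi^{-1}(l'(q')) = \pi^{-1}(l(q'+1))$ is unaffected by the removal. Once this identification is made the formula follows mechanically, and the induction closes.
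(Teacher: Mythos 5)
Your argument is correct and is essentially the paper's own proof: the paper iterates Lemma~\ref{lem:firststep} by defining a telescoping partial sum $S(j)$ and showing $S(j)=S(j+1)$, whereas you phrase the same iteration as induction on $h$; the single step of either argument is precisely one application of Lemma~\ref{lem:firststep}, and the bookkeeping of restricted sets you flag at the end is the same check the paper performs when verifying $S(j)=S(j+1)$.
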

The formula \eqref{altexpansion} can be obtained simply by an iteration of Lemma~\ref{lem:firststep}. The proof is provided in Appendix~\ref{sec:technical}.

\section{Column operations on $T'_i$ and $F'_i$}\label{sec:colop}
Throughout the section, we assume that $r\neq \pi(r)$ and fix a positive integer $i$ that satisfies $\pi(r)<r-i+1$, so that the bottom row of $T_i$ and $F_i$ does not contain $1_{\pi(r)}$. Recall from Definition~\ref{x'kdef} that the row operations described in the proof of Proposition~\ref{prop:rowop} reduce the entries of $T_i$ and $F_i$ of the form $u_\a=R_L(n_\a)+R_1(n_\a)+R_2(n_\a)$ with $n_\a\in V_w$ to $R_L(n_\a)+R_1(n_\a)$. However, the row operations turn some zero entries of $T_i$ and $F_i$ to nonzero, as seen in the second case of \eqref{x'kdef1}. We apply a series of column operations to $T'_i$ and $F'_i$ to turn these entries back to zero and further reduce the entries $R_L(n_\a)+R_1(n_\a)$ with $n_\alpha\in V_w$ to $R_L(n_\a)$. By Definitions~\ref{xLkdef} and~\ref{TLFLdef}, the resulting matrices are $T^L_i$ and $F^L_i$, respectively. 
\begin{definition}\label{TLFLcoldef}
We let $C'_{r-i+j}$, $D'_{r-i+j}$, $C^L_{r-i+j}$ and $D^L_{r-i+j}$ denote the $j$-th columns of $T'_i$, $F'_i$, $T^L_i$, and $F^L_i$, respectively. We write the matrices as \[\begin{aligned} T'_i&=[C'_{r-i+1} \ C'_{r-i+2} \ \ \ldots\ \ C'_{r-1}],\quad 
    F'_i=[D'_{r-i+1} \ D'_{r-i+2} \ \ \ldots\ \ D'_{r-1}], \\
    T^L_i&=[C^L_{r-i+1} \ C^L_{r-i+2} \ \ \ldots\ \ C^L_{r-1}],\quad \text{and}\quad
    F^L_i=[D^L_{r-i+1} \ D^L_{r-i+2} \ \ \ldots\ \ D'_{r-1}].\end{aligned}\]
\end{definition}
We apply column operations on $T'_i$ and $F'_i$, starting from the rightmost column. We then proceed to the left, reducing each column $C'_j$ to $C^L_j$ using the columns $C^L_l$ with $l>j$.
\begin{lem}\label{Qjllembase}
For any $d\geq r-i$, $d\neq \pi\i(r)$, we have $x'_d(r-1)=x^L_d(r-1)$.
\end{lem}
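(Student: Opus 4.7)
The plan is to compare Definitions~\ref{x'kdef} and~\ref{xLkdef} entry by entry. The only scenarios in which $x'_d(r-1)$ and $x^L_d(r-1)$ can disagree are: (a) when $n_{\pi(d),r-1}$ is a free variable, in which case $x'_d(r-1)=R_L(n_{\pi(d),r-1})+R_1(n_{\pi(d),r-1})$ whereas $x^L_d(r-1)=R_L(n_{\pi(d),r-1})$, so I must show $R_1(n_{\pi(d),r-1})=0$; or (b) when $x_d(r-1)=0$ and $x_{d-1}(r-1)\neq 0$, in which case $x'_d(r-1)=-n_{\pi(d),r}\cdot x_{d-1}(r-1)$ whereas $x^L_d(r-1)=x_d(r-1)=0$, so I must show this scenario cannot arise.

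Scenario (b) is ruled out by a direct computation. Since $d\neq\pi\i(r)$, the entry $n_{\pi(d),r-1}$ vanishes exactly when $d<\pi\i(r-1)$, while $x_{d-1}(r-1)\neq 0$ forces $d-1\geq\pi\i(r-1)$, and these two conditions are incompatible.

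For scenario (a), my plan is to show that $O_1(n_{\pi(d),r-1})=\emptyset$; the remark following Definition~\ref{P2def} then forces $\mathcal{P}_1(n_{\pi(d),r-1})=\emptyset$, whence $R_1(n_{\pi(d),r-1})=0$. The key structural observation is that $M_w(n_{\pi(d),r-1})$ is only two columns wide (columns $r-1$ and $r$), so its only possible entries of $1$ are $1_{r-1}$ (the top destination) and, when $\pi\i(r-1)\leq \pi\i(r)\leq d$, also $1_r$. Hence the only possible origins in the rightmost column are the bottom origin, which is excluded from $O_1$ by definition, and $1_r$ when it is present. For $1_r$ itself I check condition~(iii) of Definition~\ref{O1}: the entry immediately above it in column $r$ sits in row $\pi\i(r)-1$, and since $\pi\i(r)-1<\pi\i(r)$ the pair $(\pi(\pi\i(r)-1),r)$ does not lie in $Inv(\pi\i)$, so that entry is zero rather than a free variable. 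Thus $1_r\notin O_1$, giving $O_1(n_{\pi(d),r-1})=\emptyset$ as required.

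I do not anticipate any serious obstacle; the argument is essentially an unpacking of the relevant definitions, combined with the structural narrowness of $M_w(n_{\pi(d),r-1})$. The crucial point is simply that a two-column submatrix admits too little room for a nontrivial contribution from $R_1$, and that the unique candidate $1_r$ for a non-bottom origin in the rightmost column is immediately ruled out by the fact that no free variable can sit directly above $1_r$.
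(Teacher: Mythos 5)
Your proof is correct and follows essentially the same route as the paper: both dispatch the degenerate cases ($x_d(r-1)=0$ or $1_{r-1}$) by noting the entry above must be zero, and both exploit that $M_w(n_{\pi(d),r-1})$ spans only two columns, so $1_r$ is the only possible non-top destination and can contribute only a length-zero path. The paper concludes $R_1(n_{\pi(d),r-1})=0$ by observing that a $\mathcal{P}_1$-set requires two paths of positive length, while you conclude by showing $O_1(n_{\pi(d),r-1})=\emptyset$; these are the two equivalent criteria supplied by the remark following Definition~\ref{P2def}, so the underlying argument is the same.
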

\begin{proof}
The entry $x_d(r-1)$ is either $0$, $1_{r-1}$, or $R(n_{\pi(d),r-1})$ with $n_{\pi(d),r-1}\in V_w$. Since $d\neq \pi\i(r)$, the first case $x_d(r-1)=0$ implies that $x_d(r-1)$ is above $1_{r-1}$. It follows that if $x_d(r-1)$ is either $0$ or $1_{r-1}$, then the entries above $x_d(r-1)$ are all zero. By Definition~\ref{x'kdef} and Definition~\ref{xLkdef}, we conclude that $x'_d(r-1)=x^L_d(r-1)$ if $x_d(r-1)$ is either $0$ or $1_{r-1}$. \par
Now, suppose that $x_d(r-1)=u_{\pi(d),r-1}$, where $n_{\pi(d),r-1}$ is a free variable in $V_w$. Then $1_{r-1}$ is the top destination of $n_{\pi(d),r-1}$, and $n_{\pi(d),r-1}$ has at most one additional destination, which is $1_r$. If $1_r\in D(n_{\pi(d),r-1})$, then we also have $\gamma(1_r)=1_r\in O(n_{\pi(d),r-1})$. Hence $1_r$ only contributes as a path of length $0$. Hence, the sets in ${\mathcal P}(n_{\pi(d),r-1})$ contain a unique path with nonzero length, which starts from the bottom origin and ends at $1_{r-1}$. By the remark below Definition~\ref{P2def}, we have $P_1(n_{\pi(d),r-1})=R_1(n_{\pi(d),r-1})=0$. The lemma follows from Definitions~\ref{x'kdef} and~\ref{xLkdef}.
\end{proof}
The following proposition describes the column operations. Recall that $r-i+1>\pi(r)$.
\begin{prop}\label{Qjllem}
For each pair of integers $(j,l)$ satisfying $r-i+1\leq j<l\leq r-1$, there exists a rational function $Q_j(l)\in \Z\left[\left\{n_\a,n\i_\a\mid n_\a\in V_w\right\}\right]$ that satisfies
\begin{equation}\label{Qjlrole}
x'_d(j)+\sum_{j+1\leq l\leq r-1} Q_j(l)\cdot x^L_d(l)=x^L_d(j)
\end{equation}
for every $d\geq r-i$, $d\neq \pi\i(r)$.
\end{prop}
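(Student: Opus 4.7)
The plan is to argue by descending induction on the column index $j$, from $j=r-1$ down to $j=r-i+1$. The base case $j=r-1$ is immediate from Lemma~\ref{Qjllembase}: since $x'_d(r-1)=x^L_d(r-1)$ for every relevant $d$, the empty sum already realizes \eqref{Qjlrole} and no coefficients need to be defined. For the inductive step, suppose that for every column $l$ with $j<l\leq r-1$ we have already constructed row-independent coefficients $Q_l(m)\in\Z[\{n_\alpha^{\pm 1}\mid n_\alpha\in V_w\}]$ realizing \eqref{Qjlrole}. We must construct $Q_j(l)$, $j<l\leq r-1$, making $\sum_{l>j} Q_j(l)\cdot x^L_d(l)=x^L_d(j)-x'_d(j)$ hold simultaneously for all $d\geq r-i$, $d\neq \pi\i(r)$.

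By Definitions~\ref{x'kdef} and~\ref{xLkdef}, the discrepancy $\Delta_d:=x'_d(j)-x^L_d(j)$ takes one of three forms. If $n_{\pi(d),j}\in V_w$, then $\Delta_d=R_1(n_{\pi(d),j})$; if $x_d(j)=0$ while $x_{d-1}(j)\neq 0$, then $\Delta_d=-n_{\pi(d),r}\cdot x_{d-1}(j)$; otherwise $\Delta_d=0$. The main tool for the first case is Corollary~\ref{R1lem}, which expands $R_1(n_{\pi(d),j})$ as a sum over destinations $1_{\pi(\delta_m)}\in D_1(n_{\pi(d),j})$, with each summand carrying a factor $n_{\pi(\delta_m),r}\cdot u_{\pi(\delta_m-1),j}$ together with a $P_L$-fragment whose supports lie strictly above row $d$ and whose destinations (the set $D_{\downarrow,\delta_m}(n_{\pi(d),j})$) sit in columns $l>j$. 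Applying Corollary~\ref{auxlem} (iterated tail-swapping) rewrites each $P_L$-fragment as a polynomial whose building blocks match exactly with entries $R_L(n_{\pi(d'),l})=x^L_{d'}(l)$ for $l>j$, after accounting for the diagonal correction factors from Corollary~\ref{RLwRwtilde}. The row-dependent parts of the expansion concentrate entirely in the factors $x^L_d(l)$, while the remaining data (the positions of destinations, sign $(-1)^{t_m}$, and $n_{\pi(\delta_m),r}$ factors) depend only on the column index $j$ and the combinatorics of $w$. Reading off the coefficient of each $x^L_d(l)$ defines $Q_j(l)$.

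For case (b), the identity $-n_{\pi(d),r}\cdot x_{d-1}(j)=-n_{\pi(d),r}\cdot\bigl(x^L_{d-1}(j)+R_1(n_{\pi(d-1),j})\bigr)$ (when $n_{\pi(d-1),j}$ is itself a free variable) reduces the claim to the same path expansion applied one row higher, so the same $Q_j(l)$ that resolves case (a) also absorbs case (b); the subcase $x_{d-1}(j)=1_j$ is handled directly by noting that then $n_{\pi(d),r}$ is the bottom origin and the required coefficient is produced by a single trivial term in the $R_1$ expansion. Case (c) is automatic once one observes that for those rows $d$ the entries $x^L_d(l)$ vanish identically in the positions where $Q_j(l)$ is supported, so the correction contributes zero.

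The main obstacle will be verifying this simultaneity: checking that the single polynomial $Q_j(l)$ extracted from the $R_1$-expansion at one row $d$ produces the correct cancellation at every other row. Concretely, this requires showing that the row-dependence of $R_1(n_{\pi(d),j})$ and of $n_{\pi(d),r}\cdot x_{d-1}(j)$ both factor cleanly through $x^L_d(l)$, with identical coefficients. This will rest on a careful accounting of how the sets $O(n_{\pi(d),j})$ and $D(n_{\pi(d),j})$ vary with $d$, together with the tail-swapping identity Lemma~\ref{lem:firststep}, which guarantees the algebraic cancellations needed for the decomposition to package into a fixed polynomial in $\{n_\alpha^{\pm 1}\}$ rather than a $d$-dependent family. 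Once this consistency is in place, the inductive hypothesis promoting each $x'_d(l)$ to $x^L_d(l)$ for $l>j$ closes the argument.
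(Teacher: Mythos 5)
Your proposal correctly identifies the right ingredients — Corollary~\ref{R1lem}, the iterated tail-swapping of Corollary~\ref{auxlem}, and the partition of the discrepancy $x'_d(j)-x^L_d(j)$ into a small number of cases — but the structure you propose does not actually close the gap, and parts of it are misaligned with what needs proving.

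First, the descending induction on $j$ is not needed and does not do any work. The Proposition is, for each fixed $j$, a simultaneous system of equations in unknowns $Q_j(l)$, $l>j$, over the free index $d$; the right-hand side is already $x^L_d(j)$, and the left-hand side already uses $x^L_d(l)$. There is no dependency of the existence of $Q_j(\cdot)$ on having already produced $Q_{j'}(\cdot)$ for $j'>j$. The induction on columns appears only in the sequential \emph{application} of the column operations (adding multiples of columns $C^L_{j+1},\dots,C^L_{r-1}$ to $C'_j$, as in the discussion after Proposition~\ref{Qjllem}), not in the existence proof. The base case $j=r-1$ via Lemma~\ref{Qjllembase} is correct but vacuous in the sense that it does not feed into anything inductively.

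Second, and more seriously, the crux of the statement is precisely the ``simultaneity'' you flag as ``the main obstacle'': one must exhibit a $d$-independent $Q_j(l)\in\Z[\{n_\a,n_\a^{-1}\}]$ that resolves the row equation for \emph{all} $d$ at once, including the rows where $x_d(j)=0$ but $x_{d-1}(j)\neq 0$ (type (3)), and the rows where the target is identically zero (types (1) and (4)). You gesture at ``reading off the coefficient of each $x^L_d(l)$'' from the $R_1$-expansion at a single row, but do not explain why different rows $d$ read off the same coefficient, nor why rows of types (3) and (4) are compatible with a choice made from a type (2) row. The paper handles this by writing down an explicit, row-independent formula for $Q_j(l)$ in Definition~\ref{def:colfm}, built from the sets $D^{(0)}_{j,l}$, $O^{(2)}_{j,l,k}$, $D^{(2)}_{j,l,k}$ that depend only on $j$, $l$, $k$, and $w$, and then verifies the four identities \eqref{type1goal}--\eqref{type4goal} case by case via Lemma~\ref{lem:qjlsumnew}, with the rearrangement Lemma~\ref{lem:sumrearrange} (showing the indexing sets $A=B$) doing the bookkeeping that you would need to make ``coefficient reading'' rigorous. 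Without some substitute for that explicit construction, your argument has a genuine hole at its center.

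Finally, your treatment of case (b) (type (3)) contains an error: you write $-n_{\pi(d),r}\cdot x_{d-1}(j)=-n_{\pi(d),r}\bigl(x^L_{d-1}(j)+R_1(n_{\pi(d-1),j})\bigr)$, but by Definition~\ref{def:RLR1R2} and \eqref{Rpartition}, $x_{d-1}(j)=u_{\pi(d-1),j}=R_L+R_1+R_2$, so the $R_2(n_{\pi(d-1),j})$ term is dropped. The paper does not reduce type (3) to ``case (a) one row higher''; instead, Lemma~\ref{Qjltype3} shows directly that $1_{\pi(d)}\in D^{(0)}_{j,\pi(d)}$ in this configuration, whereupon Lemma~\ref{lem:qjlsumnew} produces the full term $n_{\pi(d),r}\,u_{\pi(d-1),j}$, matching the discrepancy exactly.
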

The proof of this proposition is deferred to section~\ref{rowcolopproof}. Let us deduce Proposition~\ref{prop:colop} from
Proposition~\ref{Qjllem}. It is a direct consequence of Proposition~\ref{Qjllem} that we have
\[    C'_j+Q_j(j+1)\cdot C^L_{j+1}+\cdots+Q_j(r-1)\cdot C^L_{r-1}=C^L_j
\]
and the same holds with $C$ replaced by $D$. By Lemma~\ref{Qjllembase}, the rightmost column of $T'_i$ satisfies $C'_{r-1}=C^L_{r-1}$. By \eqref{Qjlrole} with $j=r-2$, adding $Q_{r-2}(r-1)\cdot C^L_{r-1}$ to $C'_{r-2}$ reduces $C'_{r-2}$ to $C^L_{r-2}$. After that, we add $Q_{r-3}(r-2)\cdot C^L_{r-2}+Q_{r-3}(r-1)\cdot C^L_{r-1}$ to $C'_{r-3}$ to reduce the column to $C^L_{r-3}$. Proceeding left until we reduce every column $C'_j$ with $r-i+1\leq j\leq r-1$ to $C^L_j$, we obtain $T_i^L$. By the same process with $C$ replaced by $D$, we can reduce $F'_i$ to $F^L_i$. Each step of this column operation can be represented by a lower triangular unipotent matrix, hence Proposition~\ref{prop:colop} follows.

\subsection{Formula for the rational functions $Q_j(l)$}\label{Qjlsec}
In this section, we give the formula for the rational functions $Q_j(l)$ that satisfies \eqref{Qjlrole}. For Definitions~\ref{Djl0} through~\ref{def:colfm}, we let $j$ satisfy $2\leq j\leq r-2$ and $n_{\pi(r),j}\in V_w$.
\begin{definition}\label{Djl0}
For $1\leq l\leq r$, we let $D^{(0)}_{j,l}=D_1(n_{\pi(r),j})\cap D_{\nearrow,l}(n_{\pi(r),j})$.
\end{definition}

\begin{definition}\label{Djlk2}
For $l\leq r$ and $k\leq r$ such that $n_{l,r}\in V_w$ and $n_{\pi(k),r}\in V_w$, we let
\[D^{(2)}_{j,l,k}=\bigl(D_{\nearrow,l}(n_{\pi (r),j})\smallsetminus \{1_l\}\bigr)\cap D_{\downarrow,k}(n_{\pi(r),j}) \eqand O^{(2)}_{j,l,k}=\{n_{l,r}\}\cup \gamma( D^{(2)}_{j,l,k})\smallsetminus\{n_{\pi(k),r}\}.\]
\end{definition}
\begin{definition}\label{signjkl}
We let $\e^{(2)}_{j,l,k}=\lvert \djlktwo \rvert$.
\end{definition}
\begin{definition}\label{def:colfm}
For $1\leq l\leq r$, we let
\begin{equation}\label{colfm} \begin{aligned}
    Q_j(l) &=\sum\limits_{1_{\pi(k)}\in D^{(0)}_{j,l}} \biggl( (-1)^{\e^{(2)}_{j,l,k}}\cdot n_{\pi(k),r} \cdot u_{\pi(k-1),j}\cdot\frac{P_L^{\star}\bigl(O^{(2)}_{j,l,k}\to D^{(2)}_{j,l,k}\bigr)}{\prod\limits_{1_\mu\in  D^{(2)}_{j,l,k}}\rho(1_\mu)}\biggr).
\end{aligned}
\end{equation}
\end{definition}
To verify that the formula for $Q_j(l)$ is well-defined, we need the following lemmas.
\begin{lem}\label{lem:Djlz}
    Let $2\leq j\leq r-2$ satisfy $n_{\pi(r),j}\in V_w$ and $l\leq r$. If $\djlz\neq \emptyset$, then $n_{l,r}\in V_w$. Also, if $1_{\pi(k)}\in D_{j,l}^{(0)}$ then $n_{\pi(k),r}\in V_w$.
\end{lem}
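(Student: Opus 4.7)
The plan is a direct unpacking of definitions. The conclusions concern positions of rows and columns, so after fixing some $1_{\pi(k)}\in D_{j,l}^{(0)}$ everything should fall out of the inequalities defining $D_1$, $O_1$, and $D_{\nearrow,l}$.

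First I would observe that membership of $1_{\pi(k)}$ in $D_1(n_{\pi(r),j})$ means, by Definition~\ref{D1}, that its row (the $k$-th row) contains an element of $O_1(n_{\pi(r),j})$. Since $O_1$ consists of entries in the rightmost column (Definition~\ref{O1}), this element must be $(wn)_{k,r}=n_{\pi(k),r}$, and by the remark immediately below Definition~\ref{O1} every element of $O_1$ is a free variable. This proves the second conclusion of the lemma. Moreover, $(\pi(k),r)\in Inv(\pi\i)$ gives $k>\pi\i(r)$.

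Next I would use the hypothesis $1_{\pi(k)}\in D_{\nearrow,l}(n_{\pi(r),j})$. By Definition~\ref{Dne}, either $\pi(k)=l$, in which case $n_{l,r}=n_{\pi(k),r}\in V_w$ by the previous step, or $1_{\pi(k)}$ lies strictly above and to the right of $1_l$, which translates to $k<\pi\i(l)$ and $\pi(k)>l$. Because $1_{\pi(k)}$ lies in $M_w(n_{\pi(r),j})$ and hence in a column at most $r$, we have $l<\pi(k)\leq r$, so $l<r$. Combining with the earlier bound $k>\pi\i(r)$ gives $\pi\i(l)>k>\pi\i(r)$, so $(l,r)\in Inv(\pi\i)$ and $n_{l,r}\in V_w$.

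The only subtle point to verify is the edge case $l=r$. Here $D_{\nearrow,r}(n_{\pi(r),j})$ can contain only $1_r$, but $1_r$ cannot lie in $D_1(n_{\pi(r),j})$ because the only column-$r$ entry in its row is $1_r$ itself, not a free variable that could serve as an element of $O_1$. Thus $D_{j,r}^{(0)}=\emptyset$ and the first conclusion is vacuous there. I do not expect a genuine obstacle: the lemma is a short bookkeeping step that ensures Definition~\ref{def:colfm} is well-posed, and no combinatorial or inductive machinery is needed.
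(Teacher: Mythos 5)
Your proposal is correct and follows essentially the same route as the paper's proof: both arguments unpack Definitions~\ref{O1}, \ref{D1}, and \ref{Dne} to show that any $1_{\pi(k)}\in D^{(0)}_{j,l}$ lies below the row of $1_r$, and then conclude that both $n_{\pi(k),r}$ and $n_{l,r}$ satisfy the inversion condition and hence lie in $V_w$. Your edge-case discussion of $l=r$ is a slightly more explicit version of the same observation the paper makes implicitly (that elements of $D_1$ sit strictly below $1_r$'s row).
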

\begin{proof}
Observe that if $\djlz\neq \emptyset$ then $1_l$ is below $1_r$, since by Definition~\ref{D1} every element of $D_1(\nprj)$ is below the row containing $1_r$. It follows that $n_{l,r}\in V_w$. Also, by the remark below Definition~\ref{O1}, if $1_{\pi(k)}\in D_1(\nprj)$ then $n_{\pi(k),r}\in V_w$. This proves the lemma.
\end{proof}

\begin{lem}\label{lem:ODsetsize}
    Let $2\leq j\leq r-2$ satisfy $n_{\pi(r),j}\in V_w$, $l\leq r$ satisfy $\djlz\neq \emptyset$, and let $k\leq r$ satisfy $1_{\pi(k)}\in D_{j,l}^{(0)}$. Then
   $ \lvert O^{(2)}_{j,l,k}\rvert =\lvert D^{(2)}_{j,l,k}\rvert$.
\end{lem}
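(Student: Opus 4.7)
The plan is to compute $|O^{(2)}_{j,l,k}|$ directly from Definition~\ref{Djlk2}, by tracking which elements of $\{n_{l,r}\}\cup \gamma(\djlktwo)$ survive removal of $n_{\pi(k),r}$, and comparing the result to $|\djlktwo|$. The proof should be essentially set-theoretic bookkeeping once the correct structural observations are in place.

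The first step will be to record that $\gamma$ is injective on $\djlktwo$. Every element of $\djlktwo$ is some entry $1_m$ of $wn$, and distinct $1$'s of the permutation matrix $w$ lie in distinct rows; so their $\gamma$-images, each the rightmost nonzero entry of the row, lie in distinct rows and are therefore distinct. Hence $|\gamma(\djlktwo)|=|\djlktwo|$.

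Next I would unpack the hypothesis $1_{\pi(k)}\in \djlz\subseteq D_{\nearrow,l}(n_{\pi(r),j})$. By Definition~\ref{Dne} this forces either $\pi(k)=l$, or else $1_{\pi(k)}$ sits strictly above and to the right of $1_l$, which in indices means $k<\pi^{-1}(l)$ and $\pi(k)>l$. The argument then splits into these two cases. In the generic case $\pi(k)>l$, the conditions $1_{\pi(k)}\in D_{\nearrow,l}\smallsetminus\{1_l\}$ and $1_{\pi(k)}\in D_{\downarrow,k}$ (automatic, since $1_{\pi(k)}$ lies in row $k$) together imply $1_{\pi(k)}\in \djlktwo$, so $n_{\pi(k),r}=\gamma(1_{\pi(k)})\in \gamma(\djlktwo)$. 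On the other hand $n_{l,r}$ lies in row $\pi^{-1}(l)$, while every element of $\djlktwo\subseteq D_{\nearrow,l}\smallsetminus\{1_l\}$ lies in a row strictly less than $\pi^{-1}(l)$; hence $n_{l,r}\notin \gamma(\djlktwo)$, and also $n_{l,r}\neq n_{\pi(k),r}$ because $l\neq \pi(k)$. Removing $n_{\pi(k),r}$ from $\{n_{l,r}\}\cup \gamma(\djlktwo)$ therefore deletes exactly one element, and I obtain $|O^{(2)}_{j,l,k}|=1+|\djlktwo|-1=|\djlktwo|$.

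In the boundary case $\pi(k)=l$, so $k=\pi^{-1}(l)$, the constraint ``row strictly less than $\pi^{-1}(l)$'' coming from $D_{\nearrow,l}\smallsetminus\{1_l\}$ collides with the constraint ``row at least $k=\pi^{-1}(l)$'' coming from $D_{\downarrow,k}$, forcing $\djlktwo=\emptyset$; and since $n_{l,r}=n_{\pi(k),r}$, removing $n_{\pi(k),r}$ from $\{n_{l,r}\}\cup \gamma(\djlktwo)=\{n_{l,r}\}$ yields $O^{(2)}_{j,l,k}=\emptyset$, so both cardinalities equal $0$. The only real obstacle is this boundary analysis, namely keeping straight which row each element occupies and checking that $\pi(k)=l$ truly collapses both sets; once that is done, the equality drops out of the defining set expressions.
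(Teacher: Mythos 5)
Your proof is correct and follows essentially the same route as the paper: the paper also splits on $l=\pi(k)$ versus $l\neq\pi(k)$, showing in the first case that both sets collapse to $\emptyset$, and in the second case that $n_{l,r}\notin\gamma(\djlktwo)$ while $n_{\pi(k),r}\in\gamma(\djlktwo)$, so the cardinalities agree. You simply make explicit the injectivity of $\gamma$ and the row-by-row bookkeeping that the paper leaves tacit.
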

\begin{proof}
If $l=\pi(k)$ then $1_l$ is in the $k$-th row, hence by Definition~\ref{Djlk2} we have $\djlktwo=\ojlktwo=\emptyset$. If $l\neq \pi(k)$, then $n_{l,r}\notin \gamma (\djlktwo)$ since $1_l\notin \djlktwo$, and $n_{\pi(k),r}\in \gamma (\djlktwo)$ since $1_{\pi(k)}\in \djlz\subseteq D_{\nearrow,l}(\nprj)$. From this, the lemma follows.
\end{proof}
 Lemma~\ref{lem:Djlz} implies that if $D^{(0)}_{j,l}$ is nonempty and $1_{\pi(k)}\in \djlz$ then $n_{l,r}$ and $n_{\pi(k),r}$ are free variables, hence the sets $D^{(2)}_{j,l,k}$ and $O^{(2)}_{j,l,k}$ are well-defined. Also, by Lemma~\ref{lem:ODsetsize},  $\lvert O^{(2)}_{j,l,k}\rvert =\lvert D^{(2)}_{j,l,k}\rvert$ whenever $\djlz\neq \emptyset$ and $1_{\pi(k)}\in \djlz$. This verifies that the formula (\ref{colfm}) for $Q_j(l)$ is well-defined. See section 5.4.1 of \cite{kimWhit} for an example illustrating the column operations.

\subsection{Four types of the entries of $wu$}\label{fourtypes}
Observe that by Definition~\ref{xLkdef} we have $x^L_d(l)=0$ whenever $u_{\pi(d),l}=0$, hence we can rewrite \eqref{Qjlrole} as
\begin{equation}\label{Qjlrole1} x'_d(j)+\sum_{\substack{j+1\leq l\leq r-1  \\ u_{\pi(d),l}\neq 0}} Q_j(l)\cdot x^L_d(l)=x^L_d(j).\end{equation} 
Let $j$ and $d$ satisfy $r-i+1\leq j\leq r-2$, $r-i\leq d$ and $d\neq \pi\i(r)$. We separate the entries $x_d(j)$ into four types, depending on which of the following $d$ and $j$ satisfy:
\begin{enumerate}
    \item $d\leq \max\bigl(\pi\i(j),\pi\i(r)-1\bigr)$.
    \item $d>\max\bigl(\pi\i(j),\pi\i(r)\bigr)$ and $\pi(d)<j$.
    \item $d>\max\bigl(\pi\i(j),\pi\i(r)\bigr)$, and $\pi(d-1)\leq j<\pi(d)$.
    \item $d>\max\bigl(\pi\i(j),\pi\i(r)\bigr)$, and $j<\min\bigl(\pi(d-1),\pi(d)\bigr)$.
\end{enumerate}
It is clear that any $x_d(j)$ falls into precisely one of the four categories. \par If $x_d(j)$ belongs to type (1), then the inequality implies that $x_d(j)$ is either above the $\pi\i(r)$-th row, or in or above the $\pi\i(j)$-th row. If $x_d(j)$ is above the $\pi\i(r)$-th row, then by Lemma~\ref{rowopcor} we have $x_d(j)=x'_d(j)$, and by the remark below Definition~\ref{def:calpleftn} we have $x_d(j)=x^L_d(j)$. On the other hand, if $x_d(j)$ is in or above the $\pi\i(j)$-th row, then we have $x_{d'}(j)=0$ for all $d'<d$, since they are above the entry $1_j$. Therefore, $x_d(j)$ belongs to the third case of \eqref{x'kdef1} and the second case of \eqref{xLkdef1}, hence $x'_d(j)=x^L_d(j)=x_d(j)$. Thus if $x_d(j)$ belongs to type (1), \eqref{Qjlrole1} is equivalent to
\begin{equation}\label{type1goal}
 u_{\pi(d),j}+\sum_{\substack{j+1\leq l\leq r-1  \\ u_{\pi(d),l}\neq 0}}Q_j(l)\cdot x^L_d(l)= u_{\pi(d),j}.
\end{equation}
If $x_d(j)$ belongs to type (2), then we have $d>\pi\i(j)$ and $\pi(d)<j$, so $n_{\pi(d),j}\in V_w$. In this case, we have $x'_d(j)=R_L(n_{\pi(d),j})+R_1(n_{\pi(d),j})$ and $x^L_d(j)=R_L(n_{\pi(d),j})$, by Definitions~\ref{x'kdef} and~\ref{xLkdef}. In this case, \eqref{Qjlrole1} is equivalent to
\begin{equation}\label{type2goal}
R_L(n_{\pi(d),j})+R_1(n_{\pi(d),j})+\sum_{\substack{j+1\leq l\leq r-1  \\ u_{\pi(d),l}\neq 0}}Q_j(l)\cdot x^L_d(l)=R_L(n_{\pi(d),j}). 
\end{equation}
If $x_d(j)$ belongs to type (3), then the above inequalities imply that $x_d(j)=0$ and $x_{d-1}(j)\neq 0$, so $x_d(j)$ belongs to the second case of the definition \eqref{x'kdef1}. Hence $x'_d(j)=-n_{\pi(d),r}\cdot u_{\pi(d-1),j}$. Also, by Definition~\ref{xLkdef}, we have $x^L_d(j)=0$. Hence \eqref{Qjlrole1} is equivalent to
\begin{equation}\label{type3goal}
 -n_{\pi(d),r}\cdot u_{\pi(d-1),j}+\sum_{\substack{j+1\leq l\leq r-1  \\ u_{\pi(d),l}\neq 0}}Q_j(l)\cdot x^L_d(l)=0.    
\end{equation}
Finally, if $x_d(j)$ is of type (4), then $x'_d(j)=x^L_d(j)=0$. In this case,  \eqref{Qjlrole1} is equivalent to
\begin{equation}\label{type4goal}
    \sum_{\substack{j+1\leq l\leq r-1  \\u_{\pi(d),l}\neq 0}}Q_j(l)\cdot x^L_d(l)=0.
\end{equation}
We prove Proposition~\ref{Qjllem} by showing that rational functions $Q_j(l)$ defined in (\ref{colfm}) satisfy the equations (\ref{type1goal}), (\ref{type2goal}), (\ref{type3goal}), and (\ref{type4goal}) depending on the type of $x_d(j)$.

\subsection{Proof of Proposition~\ref{Qjllem} }\label{rowcolopproof}
As discussed at the beginning of section~\ref{sec:colop}, Proposition~\ref{Qjllem} implies Proposition~\ref{prop:colop}. Together with Proposition~\ref{prop:rowop}, this completes the proof of Proposition~\ref{thm:rowcolop}. Recall that $i$ is a fixed integer that satisfies $r-i+1>\pi(r)$. Fix an integer $j$ satisfying 
\begin{equation}\label{eq:jcond}
    \pi(r)<r-i+1\leq j\leq r-2
\end{equation} and an integer $d$ satisfying \begin{equation}\label{eq:dcond}
    d\geq r-i, \quad d\neq \pi\i(r).
\end{equation} 
Note that \eqref{eq:jcond} implies that $n_{\pi(r),j}\in V_w$, hence the definitions given in section~\ref{Qjlsec} are well defined. \par
The proof of Proposition~\ref{Qjllem} uses the decomposition lemmas given in section~\ref{sec:comb}. We apply Corollary~\ref{auxlem} to the polynomial $P_L\bigl(O^{(2)}_{j,l,k}\to D^{(2)}_{j,l,k}\bigr)$, which appears in the formula \eqref{colfm} for $Q_j(l)$. This turns the sum in \eqref{Qjlrole} into a double sum, and we deduce Proposition~\ref{Qjllem} by rearranging the sum. The precise statement is the following.
 \begin{lem}\label{lem:qjlsumnew}
    Let $2\leq j\leq r-2$ satisfy $\nprj\in V_w$ and $d$ satisfy $d>\max\bigl(\pi\i(j),\pi\i(r)\bigr)$. Assume that $D^{(0)}_{j,\pi(d)}$ is nonempty. We have
    \begin{equation}\label{eq:qjlsumnew}
    Q_j\bigl(\pi(d)\bigr)+ \sum\limits_{\substack{\max(j,\pi(d))< l\leq r-1  \\ u_{\pi(d),l}\neq 0}} Q_j(l) R_L(\npdl)=\begin{cases}
        n_{\pi(d),r}u_{\pi(d-1),j} & \text{if } 1_{\pi(d)}\in D^{(0)}_{j,\pi(d)} \\
        0 & \text{otherwise}.
    \end{cases}   
    \end{equation}
\end{lem}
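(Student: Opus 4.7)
The plan is to reduce the identity \eqref{eq:qjlsumnew} to a telescoping combinatorial cancellation. Substituting the explicit formula \eqref{colfm} for $Q_j(l)$ and Corollary~\ref{RLwRwtilde} for $R_L(n_{\pi(d),l})$ (whose case in \eqref{RLwRwtilde0} is pinned down by the position of $l$ relative to $\pi(d)$ and $\pi(r)$, using the running hypothesis $d>\max(\pi\i(j),\pi\i(r))$), the left-hand side becomes a double sum indexed by pairs $(l,k)$ with $1_{\pi(k)}\in D^{(0)}_{j,l}$, weighted by a ratio of path-polynomials and $\rho$-products. I would begin by isolating the $l=\pi(d)$, $k=d$ contribution to $Q_j(\pi(d))$. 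For this term $D^{(2)}_{j,\pi(d),d}=\emptyset$, because the only element of $D_{\nearrow,\pi(d)}(n_{\pi(r),j})$ lying in or below row $d$ is $1_{\pi(d)}$ itself and this is removed, so by Definition~\ref{PLstarsum} the factor $P_L^\star$ equals $1$ and the contribution is exactly $n_{\pi(d),r}u_{\pi(d-1),j}$. It is present precisely when $1_{\pi(d)}\in D^{(0)}_{j,\pi(d)}$, matching the right-hand side.

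Next, I would apply Corollary~\ref{auxlem} to expand each remaining $P_L^\star(O^{(2)}_{j,l,k}\to D^{(2)}_{j,l,k})$ as an alternating sum over a pivot index $q$, which peels off one element of $D^{(2)}_{j,l,k}$ and splits the path system into an ``above'' and ``below'' piece. The essential step is a reindexing: a term coming from $Q_j(l)$ with pivot $1_{\pi(k')}$ peeled off should match, up to sign, a term coming from $Q_j(\pi(k'))\cdot R_L(n_{\pi(d),l})$ obtained by swapping the roles of $l$ and $\pi(k')$. The factors $n_{l,r}$ and $n_{\pi(k),r}$ appearing in the origin sets $O^{(2)}_{j,l,k}$, together with the multiplicative factors supplied by Corollary~\ref{RLwRwtilde}, cancel cleanly, while the signs $(-1)^{\e^{(2)}_{j,l,k}}$ and $(-1)^{q-1}$ combine to give exact sign opposition between the paired terms.

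After this reindexing, the inner sum over $l$ (equivalently, over the pivot) becomes a telescoping alternating sum in which every contribution cancels with a partner except for the boundary term isolated in the first step. The case $1_{\pi(d)}\notin D^{(0)}_{j,\pi(d)}$ is the situation in which no boundary term survives, so the total is $0$. Along the way one must check that only indices $l$ with $1_l\in D^{(0)}_{j,\pi(d)}\cup\{1_{\pi(d)}\}$ and $u_{\pi(d),l}\neq 0$ actually contribute, so that the summation range in \eqref{eq:qjlsumnew} is the correct one; this follows because $R_L(n_{\pi(d),l})=0$ unless $n_{\pi(d),l}\in V_w$, and because the pairs $(l,k)$ arising from the expansion of Corollary~\ref{auxlem} are constrained by the shape of $D^{(0)}_{j,\pi(d)}$.

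The main obstacle will be the sign bookkeeping: reconciling $(-1)^{\e^{(2)}_{j,l,k}}$ from \eqref{colfm}, $(-1)^{q-1}$ from Corollary~\ref{auxlem}, and the case-dependent sign in \eqref{RLwRwtilde0} so that cancelling pairs indeed carry opposite signs, and verifying that the reindexing map is an involution with the single fixed point identified above. To calibrate this, I would work out the argument first on the explicit example \eqref{ex:w2} with a fixed small $(j,d)$, where the five sets $\mathbf{p}_1,\ldots,\mathbf{p}_5$ make the tail-swap visible, and only then transcribe the bijection and sign check for general $w$.
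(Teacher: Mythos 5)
Your isolation of the boundary term is exactly right: for $l=\pi(d)$ and $k=d$ one has $D^{(2)}_{j,\pi(d),d}=\emptyset$, so $\e^{(2)}_{j,\pi(d),d}=0$, $P_L^\star=1$, and the corresponding term in $Q_j(\pi(d))$ is $n_{\pi(d),r}u_{\pi(d-1),j}$, present iff $1_{\pi(d)}\in D^{(0)}_{j,\pi(d)}$. You also correctly locate Corollary~\ref{auxlem} as the engine. However, two parts of the plan do not match what is actually needed.

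First, Corollary~\ref{RLwRwtilde} plays no role here and would lead you astray. That corollary rewrites $R_L^{(w)}(n_{a,b})$ in terms of $R^{(\wtilde)}(n_{a,b})$, i.e.\ it passes to a Weyl group element of smaller rank; it is used for the induction on $r$ in the second paper, not for this internal identity. The $R_L(n_{\pi(d),l})$ factors on the left side of \eqref{eq:qjlsumnew} are not to be substituted away. They arise \emph{from} the expansion: applying Corollary~\ref{auxlem} to $P_L(O^{(2)}_{j,\pi(d),\b_m}\to D^{(2)}_{j,\pi(d),\b_m})$ splits each term into an upward and downward piece, and one must then identify the downward sets $O(\downarrow,q)$, $D(\downarrow,q)$ with $O(n_{\pi(d),\l_m(q)})$, $D(n_{\pi(d),\l_m(q)})$, so that the downward $P_L$-polynomial, normalized by the appropriate $\rho$-product, becomes precisely $R_L(n_{\pi(d),\l_m(q)})$. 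This identification is the content of Lemma~\ref{lem:PLO2D2} and is the step your sketch glosses over.

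Second, the proof is not a telescoping cancellation with an involution and a fixed point. The paper expands only $Q_j(\pi(d))$ (with the boundary term removed), obtaining a double sum over pivot pairs $(m,q)$, and then uses Lemma~\ref{lem:sumrearrange} to establish a bijection between this index set and the index set of the double sum defining $\sum_l Q_j(l)R_L(n_{\pi(d),l})$. The two sides are shown to be literally equal term by term after reindexing; nothing cancels. Your plan of expanding all of the $Q_j(l)$ simultaneously and pairing terms of opposite sign across different $l$'s is a different and heavier bookkeeping problem for which you do not supply the pairing, and nothing in your sketch guarantees it can be carried out. If you instead expand only $Q_j(\pi(d))$ and construct the bijection of Lemma~\ref{lem:sumrearrange}, the signs and $\rho$-normalizations work out automatically from the multiplicativity of $\rho$ along the decomposition $D^{(2)}_{j,l,k}=D^{(2)}_{j,\l(q),k}\sqcup D(n_{l,\l(q)})\sqcup\{1_{\l(1)},\ldots,1_{\l(q-1)}\}$ and the resulting relation $\e^{(2)}_{j,l,k}=\e^{(2)}_{j,\l(q),k}+\lvert D(n_{l,\l(q)})\rvert+q-1$.
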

We also use the following lemma. 
\begin{lem}\label{lem:npdj}
 Let $2\leq j\leq r-2$ satisfy $n_{\pi(r),j}\in V_w$, and let $1\leq d\leq r$ satisfy $\npdj\in V_w$. We have $D_{\uparrow,d}(\nprj)=D(\npdj)$ and $D_1(\npdj)=D^{(0)}_{j,\pi(d)}$.
\end{lem}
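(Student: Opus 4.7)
My plan is to prove both equalities by unpacking the relevant definitions; the content is straightforward bookkeeping with index ranges rather than a deep combinatorial argument.

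For (a), I will show that $D(\npdj)$ and $D_{\uparrow,d}(\nprj)$ describe the same family of entries of $wn$. By Definitions~\ref{mwna}, \ref{Ddef}, and~\ref{ODup}, each equals the collection of entries $1_l$ of $wn$ lying in the rectangle spanning rows $\pi\i(j)$ through $d$ and columns $j$ through $r$. The hypothesis $\npdj\in V_w$ gives $\pi\i(j)<d$, so the rectangle is nontrivial, and the two descriptions coincide.

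For (b), I will characterize each side by the integer $k$ with $1_{\pi(k)}$ a member, and verify that the resulting lists of conditions agree. First, $\npdj\in V_w$ forces $\pi(d)<j$, so $1_{\pi(d)}$ lies strictly to the left of column $j$ and is not a destination of $\nprj$; moreover every element $1_l\in D(\nprj)$ automatically satisfies $l\ge j>\pi(d)$. Consequently Definition~\ref{Dne} collapses to
\[
D_{\nearrow,\pi(d)}(\nprj)=\bigl\{1_{\pi(k)}\in D(\nprj)\,\bigm|\,k<d\bigr\},
\]
so $D^{(0)}_{j,\pi(d)}$ is simply the subset of $D_1(\nprj)$ consisting of elements in rows strictly above row $d$.

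Next I will unpack $D_1(n_\a)$ via Definitions~\ref{O1} and~\ref{D1}: we have $1_{\pi(k)}\in D_1(n_\a)$ exactly when $n_{\pi(k),r}\in O_1(n_\a)$, which for either choice $\a=\npdj$ or $\a=\nprj$ translates into the four conditions (i) $k\ge\pi\i(r)+2$, so that $n_{\pi(k-1),r}\in V_w$; (ii) $k$ lies strictly above the bottom row of $M_w(n_\a)$, i.e.\ $k<d$ or $k<r$ respectively; (iii) $\pi\i(j)<k$ and $\pi(k)>j$, so that row $k$ carries a non-top destination of $n_\a$; and (iv) $\pi(k-1)\le j$, so that row $k-1$ carries no such destination. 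Intersecting the $\a=\nprj$ list with the additional constraint $k<d$ produces exactly the $\a=\npdj$ list, which yields (b).

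The only step requiring genuine care is (iv): translating $n_{\pi(k-1),r}\notin O(n_\a)$ into the purely index-theoretic inequality $\pi(k-1)\le j$. This rests on the observation that $n_{\pi(k-1),r}$, being in the rightmost column of $wn$, is automatically the rightmost nonzero entry of row $k-1$ in $M_w(n_\a)$, so its failure to be an origin must come from row $k-1$ containing no destination of $n_\a$ other than $1_j$. I will also verify that row $k-1$ lies inside $M_w(n_\a)$ for both choices of $\a$, which follows from $\pi\i(j)\le k-1<d\le r$ together with the constraints in (i)--(iii), ensuring the comparison is valid uniformly in $\a$.
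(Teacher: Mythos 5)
Your proof is correct and follows the same strategy as the paper: unpack Definitions~\ref{mwna}, \ref{Ddef}, \ref{O1}, \ref{D1}, \ref{Djl0}, and \ref{Dne} and compare. The paper phrases the second equality as a direct mutual inclusion and is somewhat terse about why $1_\mu\in D(\npdj)$ and $1_\mu\in D_1(\nprj)$ jointly give $1_\mu\in D_1(\npdj)$ (and about the converse), whereas you make this transparent by isolating conditions (i), (iii), (iv) that are independent of the choice of $\a$; this is a reasonable and slightly more explicit way to present the same bookkeeping.
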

For the sake of brevity, we postpone the proof of these two lemmas to Appendix~\ref{sec:technical}. We are now ready to prove Proposition~\ref{Qjllem}.
\begin{lem}\label{Qjltype1}
   Suppose that $x_d(j)=u_{\pi(d),j}$ belongs to type (1), that is, $j$ and $d$ satisfy the inequality $d\leq \max\bigl(\pi\i(j),\pi\i(r)-1\bigr)$. Then the rational functions $Q_j(l)$ defined in \eqref{colfm} satisfy \eqref{type1goal}.
\end{lem}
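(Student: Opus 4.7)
The plan is to cancel the $u_{\pi(d),j}$ on both sides of \eqref{type1goal} and then show termwise that every remaining summand vanishes. After cancellation the target reduces to
\[\sum_{\substack{j+1\leq l\leq r-1\\ u_{\pi(d),l}\neq 0}}Q_j(l)\cdot x^L_d(l)=0,\]
and by the formula \eqref{colfm}, $Q_j(l)$ is a sum indexed by $1_{\pi(k)}\in D^{(0)}_{j,l}$. Thus it suffices to prove that $D^{(0)}_{j,l}$ is empty for every $l$ with $j+1\leq l\leq r-1$ and $u_{\pi(d),l}\neq 0$, which in turn forces $Q_j(l)=0$ identically.

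The first step is to unpack membership of $1_{\pi(k)}$ in $D^{(0)}_{j,l}=D_1(n_{\pi(r),j})\cap D_{\nearrow,l}(n_{\pi(r),j})$ as inequalities on $k$: membership in $D(n_{\pi(r),j})$ forces $k\geq \pi\i(j)$, coming from the range of rows of $M_w(n_{\pi(r),j})$; membership in $D_1(n_{\pi(r),j})$ forces $k>\pi\i(r)$, since every element of $O_1(n_{\pi(r),j})$ lies strictly below $1_r$ in the rightmost column and each destination in $D_1$ shares the row of its paired origin; membership in $D_{\nearrow,l}(n_{\pi(r),j})$ forces $k\leq \pi\i(l)$ together with $\pi(k)\geq l$. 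Complementarily, the index condition $u_{\pi(d),l}\neq 0$ is equivalent to $\pi\i(l)\leq d$.

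The second step is to split the type (1) hypothesis $d\leq \max\bigl(\pi\i(j),\pi\i(r)-1\bigr)$ into two subcases, using $d\neq \pi\i(r)$ from \eqref{eq:dcond}. In the first subcase, $d<\pi\i(r)$, the chain
\[\pi\i(l)\leq d<\pi\i(r)<k\leq \pi\i(l)\]
is an immediate contradiction. In the second subcase, $\pi\i(r)<d\leq \pi\i(j)$ (which is what type (1) forces once $d>\pi\i(r)-1$), the chain
\[\pi\i(j)\leq k\leq \pi\i(l)\leq d\leq \pi\i(j)\]
collapses to equalities throughout, forcing $\pi\i(l)=\pi\i(j)$, hence $l=j$, which contradicts $l\geq j+1$. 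In both subcases $D^{(0)}_{j,l}=\emptyset$ and so $Q_j(l)=0$, completing the reduction.

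The whole argument is combinatorial bookkeeping on row indices. The one step that requires care is isolating the constraint $k>\pi\i(r)$ coming from $D_1$, since that is exactly what distinguishes type (1) from the other three types in the eventual case analysis; once the four inequalities on $k$ are in hand, each subcase reduces to a one-line pigeonhole.
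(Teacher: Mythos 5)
Your proof is correct and takes essentially the same approach as the paper: both reduce to showing that $D^{(0)}_{j,l}=\emptyset$ for every relevant $l$, hence $Q_j(l)=0$, and both split on whether $d<\pi\i(r)$ or $\pi\i(r)<d\leq\pi\i(j)$ (the paper phrases this as $1_l$ lying above $1_r$ or above $1_j$, but the inequality chains are the same). Your rendering via explicit row-index inequalities is just a more elementary unpacking of the paper's geometric phrasing; nothing substantive differs.
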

\begin{proof}
Let $l$ satisfy $j<l\leq r-1$ and $u_{\pi(d),l}\neq 0$. Then we have $\pi\i(l)\leq d$, thus the inequality $d\leq \max\bigl(\pi\i(j),\pi\i(r)-1\bigr)$ implies that $1_l$ is either above $1_j$ or above $1_r$. If $1_l$ is above $1_j$ then $D_{\nearrow,l}(\nprj)$ is empty. On the other hand, if $1_l$ is above $1_r$, then since every element of the set $D_1(\nprj)$ is below the row containing $1_r$, the intersection $D_1(\nprj)\cap D_{\nearrow,l}(\nprj)$ is empty. By Definitions~\ref{Djl0} and~\ref{def:colfm}, we conclude that $Q_j(l)=0$. From this \eqref{type1goal} follows.
\end{proof}

\begin{lem}\label{Qjltype2}
   Suppose that $x_d(j)=u_{\pi(d),j}$ belongs to type (2), that is, $j$ and $d$ satisfy
   \begin{equation}\label{Type2ineq}
   d>\max\bigl(\pi\i(j),\pi\i(r)\bigr), \quad \text{and}\quad \pi(d)<j.  
   \end{equation} 
   Then the rational functions $Q_j(l)$ satisfy \eqref{type2goal}.
\end{lem}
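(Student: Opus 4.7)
The plan is to reduce \eqref{type2goal} to the identity $R_1(n_{\pi(d),j})=Q_j(\pi(d))$ and then verify that identity by matching Corollary~\ref{R1lem} against Definition~\ref{def:colfm} term by term.

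First, under the type (2) hypotheses, for any $l$ with $j+1\leq l\leq r-1$ and $u_{\pi(d),l}\neq 0$ we have $n_{\pi(d),l}\in V_w$, because $\pi(d)<j<l$ rules out the possibility $\pi(d)=l$ and forces $(\pi(d),l)\in Inv(\pi\i)$. By \eqref{xLkdef1} this gives $x^L_d(l)=R_L(n_{\pi(d),l})$. Cancelling $R_L(n_{\pi(d),j})$ on both sides of \eqref{type2goal} reduces the goal to
\[
R_1(n_{\pi(d),j})+\sum_{\substack{j+1\leq l\leq r-1\\ u_{\pi(d),l}\neq 0}}Q_j(l)\cdot R_L(n_{\pi(d),l})=0.
\]
I would then invoke Lemma~\ref{lem:qjlsumnew}: its hypotheses are met because $n_{\pi(r),j}\in V_w$ (as $\pi(r)<j<r$ by \eqref{eq:jcond}) and $d>\max(\pi\i(j),\pi\i(r))$. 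In type (2) we have $\max(j,\pi(d))=j$; moreover $1_{\pi(d)}\notin D^{(0)}_{j,\pi(d)}$, because $\pi(d)<j$ places $1_{\pi(d)}$ outside the column range of $M_w(n_{\pi(r),j})$ and hence outside $D(n_{\pi(r),j})$. The ``otherwise'' branch of Lemma~\ref{lem:qjlsumnew} therefore applies and yields
\[
Q_j(\pi(d))+\sum_{\substack{j<l\leq r-1\\ u_{\pi(d),l}\neq 0}}Q_j(l)\cdot R_L(n_{\pi(d),l})=0.
\]
Since $l>j$ is the same as $l\geq j+1$ for integer $l$, subtracting this from the previous display cancels the two sums and leaves exactly $R_1(n_{\pi(d),j})=Q_j(\pi(d))$.

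To establish this remaining identity, I would apply Lemma~\ref{lem:npdj} to get $D_1(n_{\pi(d),j})=D^{(0)}_{j,\pi(d)}$, so that the sums in Corollary~\ref{R1lem} and in \eqref{colfm} are indexed by the same $1_{\pi(k)}$'s. For each such $k$, the scalar factors $n_{\pi(k),r}$ and $u_{\pi(k-1),j}$ already coincide, and the signs will match once I prove the set-level identity $\djlktwo=D_{\downarrow,k}(n_{\pi(d),j})$ (with $l=\pi(d)$). This identity follows by combining $D(n_{\pi(d),j})=D_{\uparrow,d}(n_{\pi(r),j})$ from Lemma~\ref{lem:npdj} with the observation that in type (2) row $d$ contributes no destination of $n_{\pi(r),j}$, since $\pi(d)<j$ excludes $1_{\pi(d)}$ from $M_w(n_{\pi(r),j})$. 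The analogous matching $\ojlktwo=O_{\downarrow,k+1}(n_{\pi(d),j})$ is then read off from Definition~\ref{Odef}: the bottom origin $n_{\pi(d),r}$ of $n_{\pi(d),j}$ is exactly the distinguished element that builds $\ojlktwo$, and the remaining origins on either side are the $\gamma$'s of the destinations in rows strictly greater than $k$.

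The main obstacle is the bookkeeping for the two origin sets, especially at the row $k$ itself, whose $\gamma$ is removed from $\ojlktwo$ via the subtraction of $\{n_{\pi(k),r}\}$ and from $O_{\downarrow,k+1}(n_{\pi(d),j})$ via the strict inequality, and at the row $\pi\i(r)$ in the case $\pi\i(r)\in[k,d-1]$, where $1_r$ appears on both sides as a length-zero path without affecting the polynomial identity. A short separate argument handles the degenerate case $D^{(0)}_{j,\pi(d)}=\emptyset$ outside the scope of Lemma~\ref{lem:qjlsumnew}: by Lemma~\ref{lem:npdj} one then has $D_1(n_{\pi(d),j})=\emptyset$, so $R_1(n_{\pi(d),j})=0=Q_j(\pi(d))$, and a direct inspection of \eqref{colfm} shows the remaining terms $Q_j(l)R_L(n_{\pi(d),l})$ also vanish.
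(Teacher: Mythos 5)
Your proposal follows the same route as the paper: reduce \eqref{type2goal} to \eqref{type2goal2}, invoke Lemmas~\ref{lem:qjlsumnew} and~\ref{lem:npdj} together with the observation $1_{\pi(d)}\notin D^{(0)}_{j,\pi(d)}$ to strip the sum to $R_1(n_{\pi(d),j})=Q_j(\pi(d))$, and then verify this by matching Corollary~\ref{R1lem} against Definition~\ref{def:colfm} via the set identities $D^{(2)}_{j,\pi(d),\b_m}=D_{\downarrow,\b_m}(n_{\pi(d),j})$ and $O^{(2)}_{j,\pi(d),\b_m}=O_{\downarrow,\b_m+1}(n_{\pi(d),j})$. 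Your treatment of the degenerate case $D^{(0)}_{j,\pi(d)}=\emptyset$ (left implicit in the paper) and your sketch of the bookkeeping are both correct.
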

\begin{proof}
The inequality \eqref{Type2ineq} implies that $n_{\pi(d),j}$ is a free variable below the row containing $1_r$. If $l>j$ satisfies $u_{\pi(d),l}\neq 0$, then the inequality $\pi(d)<j<l$ implies that $n_{\pi(d),l}\in V_w$. By Definition~\ref{xLkdef}, we have $x^L_d(l)=R_L(n_{\pi(d),l})$. Thus we can rewrite (\ref{type2goal}) as
\begin{equation}\label{type2goal2}
 R_1(n_{\pi(d),j})+\sum_{\substack{j+1\leq l\leq r-1  \\ u_{\pi(d),l}\neq 0}}Q_j(l) R_L(n_{\pi(d),l})=0. 
\end{equation}
By \eqref{eq:jcond} and \eqref{Type2ineq}, $d$ and $j$ satisfy the hypotheses for Lemmas~\ref{lem:qjlsumnew} and~\ref{lem:npdj}:
\[2\leq j\leq r-2, \quad  \nprj\in V_w, \quad \npdj\in V_w, \eqand d>\max\bigl(\pi\i(j),\pi\i(r)\bigr).\]
By Lemma~\ref{lem:npdj}, we have $D^{(0)}_{j,\pi(d)}=D_1(n_{\pi(d),j})$. Also, since $\pi(d)<j$, $1_{\pi(d)}$ is to the left of $1_j$, hence $1_{\pi(d)}\notin D_1(n_{\pi(d),j})$. Therefore, by Lemma~\ref{lem:qjlsumnew}, the lemma follows once we show that $Q_j\bigl(\pi(d)\bigr)=R_1(\npdj)$. Write
$O_1(\npdj)=\{n_{\pi(\b_1),r},\ldots,n_{\pi(\b_s),r}\}$ and $D_1(n_{\pi(d),j})=\{1_{\pi(\b_1)},1_{\pi(\b_2)},\ldots,1_{\pi(\b_s)}\}$, with $s=0$ in case $O_1(\npdj)=D_1(\npdj)=\emptyset$. We have
\begin{equation}\label{qjlsum2}
Q_{j}\left(\pi(d)\right)=\sum_{m=1}^{s} \biggl( (-1)^{\e^{(2)}_{j,\pi(d),\b_m}}\cdot n_{\pi(\b_m),r} \cdot u_{\pi(\b_m-1),j}\cdot \frac{ P_L^{\star}\bigl(O^{(2)}_{j,\pi(d),\b_m}\to D^{(2)}_{j,\pi(d),\b_m}\bigr)}{\prod_{1_\mu\in D^{(2)}_{j,\pi(d),\b_m}}\rho(1_\mu)}\biggr).
\end{equation}
With Definition~\ref{Djlk2}, it is straightforward to verify that $D_{\downarrow,\b_m}(\npdj)=D^{(2)}_{j,\pi(d),\b_m}$ and $O_{\downarrow,\b_m+1}(\npdj)=O^{(2)}_{j,\pi(d),\b_m}$ (see below \cite[(5.132)]{kimWhit} for a proof of this fact). By Lemma~\ref{R1lem}, we conclude that $Q_j\bigl(\pi(d)\bigr)=R_1(\npdj)$. This finishes the proof.
\end{proof}


\begin{lem}\label{Qjltype3}
Suppose that $x_d(j)=u_{\pi(d),j}$ belongs to type (3), so that we have
\begin{equation}\label{Type3ineq}
    d>\max\bigl(\pi\i(j),\pi\i(r)\bigr) \quad\text{and}\quad \pi(d-1)\leq j<\pi(d).
\end{equation} 
Then the rational functions $Q_j(l)$ satisfy the equation \eqref{type3goal}.
\end{lem}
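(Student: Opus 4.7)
The plan is to apply Lemma~\ref{lem:qjlsumnew} with the choice $l=\pi(d)$. Hypotheses \eqref{eq:jcond} and the type (3) inequality \eqref{Type3ineq} supply $2\leq j\leq r-2$, $n_{\pi(r),j}\in V_w$, and $d>\max(\pi\i(j),\pi\i(r))$, so the only remaining condition is the nonemptiness of $D^{(0)}_{j,\pi(d)}$. I would establish the stronger statement that $1_{\pi(d)}\in D^{(0)}_{j,\pi(d)}$; by Definition~\ref{Djl0} this reduces to showing $1_{\pi(d)}\in D_1(n_{\pi(r),j})$, since the resulting containment in $D_{\nearrow,\pi(d)}(n_{\pi(r),j})$ will then be immediate.

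\textbf{Membership in $O_1$ and $D_1$.} To show $1_{\pi(d)}\in D_1(n_{\pi(r),j})$, I would verify $n_{\pi(d),r}\in O_1(n_{\pi(r),j})$. First, $n_{\pi(d),r}$ is an origin of $n_{\pi(r),j}$: the inequalities $\pi(d)>j$ and $d>\pi\i(j)$ from \eqref{Type3ineq} place $1_{\pi(d)}$ inside $M_w(n_{\pi(r),j})$ as a non-top destination. Moreover, $d<r$ (if $d=r$ then $\pi(d)=\pi(r)<j$, contradicting $j>\pi(r)$ from \eqref{eq:jcond}) together with $d>\pi\i(r)$ gives $n_{\pi(d),r}\in V_w$, so it is the rightmost nonzero entry of row $d$ of $M_w(n_{\pi(r),j})$. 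Conditions (i) and (ii) of Definition~\ref{O1} are then immediate. For (iii), type (3) precludes $d-1=\pi\i(r)$ (which would force $\pi(d-1)=r>j$), hence $d-1>\pi\i(r)$ and $n_{\pi(d-1),r}\in V_w$. In the subcase $\pi(d-1)<j$, the entry $1_{\pi(d-1)}$ lies to the left of $1_j$, so it is not a destination of $n_{\pi(r),j}$ and $n_{\pi(d-1),r}$ is not an origin. In the edge subcase $\pi(d-1)=j$, $1_{\pi(d-1)}=1_j$ is the top destination, which by Definition~\ref{Odef} contributes no origin, so $n_{\pi(d-1),r}=n_{j,r}$ is again a free variable that is not an origin.

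\textbf{Matching the sums.} With $1_{\pi(d)}\in D^{(0)}_{j,\pi(d)}$ verified, Lemma~\ref{lem:qjlsumnew} yields
\[Q_j(\pi(d))+\sum_{\substack{\pi(d)<l\leq r-1 \\ u_{\pi(d),l}\neq 0}}Q_j(l)\,R_L(n_{\pi(d),l})=n_{\pi(d),r}\cdot u_{\pi(d-1),j},\]
since $\max(j,\pi(d))=\pi(d)$ when $j<\pi(d)$. I would then match this with the sum in \eqref{type3goal}: terms with $j<l<\pi(d)$ vanish because $n_{\pi(d),l}$ lies strictly below the diagonal of $n$, giving $u_{\pi(d),l}=0$; the term $l=\pi(d)$ contributes $Q_j(\pi(d))$ since $x^L_d(\pi(d))=1$; and for $l>\pi(d)$ with $u_{\pi(d),l}\neq 0$, $n_{\pi(d),l}$ is a free variable and $x^L_d(l)=R_L(n_{\pi(d),l})$ by Definition~\ref{xLkdef}. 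Substitution into \eqref{type3goal} produces the identity $-n_{\pi(d),r}u_{\pi(d-1),j}+n_{\pi(d),r}u_{\pi(d-1),j}=0$. The main obstacle is the careful case analysis needed to verify the $O_1$ condition (iii), especially the edge subcase $\pi(d-1)=j$, which relies on the subtle fact from Definition~\ref{Odef} that the top destination does not contribute a corresponding origin, allowing $n_{j,r}$ to qualify as the required non-origin free variable.
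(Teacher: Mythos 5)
Your proof is correct and follows essentially the same route as the paper: both verify $1_{\pi(d)}\in D^{(0)}_{j,\pi(d)}$ by establishing $n_{\pi(d),r}\in O_1(n_{\pi(r),j})$ (with the same two subcases $\pi(d-1)=j$ and $\pi(d-1)<j$), then invoke Lemma~\ref{lem:qjlsumnew} to produce the telescoping identity. Your extra observation that $d<r$ (needed for condition~(ii) of Definition~\ref{O1}) and your careful exclusion of $d-1=\pi^{-1}(r)$ are slightly more explicit than the paper's phrasing but amount to the same argument.
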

\begin{proof} 
The inequality \eqref{Type3ineq} implies that $\pi(d)\neq r$, hence $j+1\leq \pi(d)\leq r-1$. If $l=\pi(d)$ then by Definition~\ref{xLkdef} we have $x^L_d(l)=x_d(l)=1_{\pi(d)}$. Also, if $l<\pi(d)$ then $u_{\pi(d),l}=0$, and if $l>\pi(d)$ and $u_{\pi(d),l}\neq 0$ then $n_{\pi(d),l}\in V_w$. Hence we can rewrite \eqref{type3goal} as
\begin{equation}\label{type3goal2}
-n_{\pi(d),r} \cdot u_{\pi(d-1),j}+Q_j(\pi(d))+\sum_{\substack{\pi(d)< l\leq r-1  \\ u_{\pi(d),l}\neq 0}}Q_j(l) R_L(n_{\pi(d),l})=0.
\end{equation}
By the same approach we used for the previous lemma, \eqref{type3goal2} follows from Lemma~\ref{lem:qjlsumnew} once we show that $1_{\pi(d)}\in D_{j,\pi(d)}^{(0)}$. Recall from \eqref{Type3ineq} that we have $d>\pi\i(j)$ and $\pi(d)>j$, hence $1_{\pi(d)}$ is below and to the right of $1_j$. It follows that $1_{\pi(d)}$ is a destination of $\nprj$, and by Definition~\ref{Dne} we have $1_{\pi(d)}\in D_{\nearrow,\pi(d)}(\nprj)$. \par 
By Definition~\ref{Djl0}, it remains to show that $1_{\pi(d)}\in D_1(\nprj)$. For this, by Definition~\ref{D1}, we need to show that $n_{\pi(d),r}\in O_1(\nprj)$. It follows from \eqref{Type3ineq} that $n_{\pi(d),r}\in V_w$, thus $n_{\pi(d),r}=\gamma(1_{\pi(d)})$ is an origin of $\nprj$. It follows immediately from \eqref{Type3ineq} that $\pi(d-1)< r$, hence $n_{\pi(d-1),r}\in V_w$. Also, the inequality $\pi(d-1)\leq j$ in \eqref{Type3ineq} implies that the entry $1_{\pi(d-1)}$ is either equal to $1_j$ or to the left of $1_j$. If $1_{\pi(d-1)}=1_j$ then $1_{\pi(d-1)}$ is the top destination of $\nprj$, and it follows from Definition~\ref{Odef} that $n_{\pi(d-1),r}$ is not an origin of $\nprj$. On the other hand, if $1_{\pi(d-1)}$ is to the left of $1_j$, then $1_{\pi(d-1)}$ is not in $D(\nprj)$, consequently $n_{\pi(d-1),r}$ is not an origin of $\nprj$. By Definition~\ref{O1}, we conclude that $n_{\pi(d),r}\in O_1(\nprj)$.
\end{proof}

\begin{lem}\label{Qjltype4}
Suppose that $x_d(j)=u_{\pi(d),j}$ belongs to type (4), so that we have
\begin{equation}\label{Type4ineq} d>\max\bigl(\pi\i(j),\pi\i(r)\bigr) \quad\text{and}\quad j<\min\bigl(\pi(d-1),\pi(d)\bigr).
\end{equation} Then the rational functions $Q_j(l)$ satisfy \eqref{type4goal}.
\end{lem}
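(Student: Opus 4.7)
The strategy parallels the proofs of Lemmas~\ref{Qjltype2} and~\ref{Qjltype3}: reduce the sum in \eqref{type4goal} to the form treated by Lemma~\ref{lem:qjlsumnew}, and then show that in the type (4) regime the indicator $[1_{\pi(d)}\in D^{(0)}_{j,\pi(d)}]$ on the right-hand side of \eqref{eq:qjlsumnew} vanishes.

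First I would reduce the range of summation. The type (4) hypotheses \eqref{Type4ineq} imply that $\pi(d)\neq r$ (so $n_{\pi(d),r}\in V_w$) and that $u_{\pi(d),l}=0$ for every $j+1\leq l<\pi(d)$, since these columns lie strictly to the left of $1_{\pi(d)}$. At $l=\pi(d)$ one has $x^L_d(\pi(d))=1_{\pi(d)}=1$, while for $l>\pi(d)$ with $u_{\pi(d),l}\neq 0$ necessarily $n_{\pi(d),l}\in V_w$ and $x^L_d(l)=R_L(n_{\pi(d),l})$ by Definition~\ref{xLkdef}. Thus \eqref{type4goal} collapses to
\[
Q_j(\pi(d))+\sum_{\substack{\pi(d)<l\leq r-1\\ u_{\pi(d),l}\neq 0}}Q_j(l)\,R_L(n_{\pi(d),l})=0,
\]
whose left-hand side is exactly the LHS of \eqref{eq:qjlsumnew} with $\max(j,\pi(d))=\pi(d)$.

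If $D^{(0)}_{j,\pi(d)}$ is empty, then for every $l>\pi(d)$ with $n_{\pi(d),l}\in V_w$ the entry $1_l$ lies strictly northeast of $1_{\pi(d)}$ (as $\pi^{-1}(l)<d$ and $l>\pi(d)$), so $D_{\nearrow,l}(n_{\pi(r),j})\subseteq D_{\nearrow,\pi(d)}(n_{\pi(r),j})$ and therefore $D^{(0)}_{j,l}\subseteq D^{(0)}_{j,\pi(d)}=\emptyset$; Definition~\ref{def:colfm} then gives $Q_j(l)=0$ for all such $l$, and the identity is trivial. If $D^{(0)}_{j,\pi(d)}$ is nonempty, Lemma~\ref{lem:qjlsumnew} applies and reduces the goal to showing $1_{\pi(d)}\notin D^{(0)}_{j,\pi(d)}$; equivalently, by Definition~\ref{D1}, that $n_{\pi(d),r}\notin O_1(n_{\pi(r),j})$.

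The main step is to verify that condition (iii) of Definition~\ref{O1} fails for $n_{\pi(d),r}$ in the type (4) regime. If $d=r$, then $n_{\pi(d),r}$ is the bottom origin of $n_{\pi(r),j}$, violating condition (ii) of Definition~\ref{O1}. If $d<r$, I would examine the entry $n_{\pi(d-1),r}$ directly above it. In the subcase $d-1=\pi^{-1}(r)$, the entry $n_{\pi(d-1),r}=1_r$ is not a free variable, so condition (iii) fails at once. Otherwise $d-1>\pi^{-1}(r)$ and $\pi(d-1)<r$, so $n_{\pi(d-1),r}\in V_w$; but the type (4) hypothesis $j<\pi(d-1)$ together with $d-1\geq \pi^{-1}(j)$ puts $1_{\pi(d-1)}$ inside $M_w(n_{\pi(r),j})$ as a destination distinct from the top destination $1_j$, and since $n_{\pi(d-1),r}$ is manifestly the rightmost nonzero entry of row $d-1$, we obtain $n_{\pi(d-1),r}=\gamma(1_{\pi(d-1)})\in O(n_{\pi(r),j})$, again violating condition (iii). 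In every case $n_{\pi(d),r}\notin O_1(n_{\pi(r),j})$, so $1_{\pi(d)}\notin D^{(0)}_{j,\pi(d)}$, and Lemma~\ref{lem:qjlsumnew} yields \eqref{type4goal}.

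The hard part is recognizing that the type (4) inequality $j<\pi(d-1)$ is exactly the structural condition that promotes $1_{\pi(d-1)}$ to a non-top destination of $n_{\pi(r),j}$, which in turn promotes $n_{\pi(d-1),r}$ to an origin and kills membership of $n_{\pi(d),r)}$ in $O_1$. The bookkeeping for the edge cases $d=r$ and $d-1=\pi^{-1}(r)$ is routine once this point is isolated.
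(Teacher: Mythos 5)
Your proof is correct and takes essentially the same approach as the paper: reduce \eqref{type4goal} to the form in Lemma~\ref{lem:qjlsumnew} and conclude by showing $1_{\pi(d)}\notin D^{(0)}_{j,\pi(d)}$, which both you and the paper establish by observing that $1_{\pi(d-1)}$ is a non-top destination of $n_{\pi(r),j}$, so that $n_{\pi(d-1),r}\in O(n_{\pi(r),j})$ and condition~(iii) of Definition~\ref{O1} fails for $n_{\pi(d),r}$. Your explicit handling of the case $D^{(0)}_{j,\pi(d)}=\emptyset$ (where Lemma~\ref{lem:qjlsumnew} does not directly apply but the identity holds because every $Q_j(l)$ in the sum vanishes) and of the edge cases $d=r$ and $d-1=\pi^{-1}(r)$ is slightly more careful than the paper's treatment, which leaves these implicit, but the core argument is identical.
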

\begin{proof}
Just like the previous case of type (3), we can rewrite the equation \eqref{type4goal} as
\begin{equation}\label{type4goal2}
 Q_j(\pi(d))+\sum_{\substack{\pi(d)< l\leq r-1  \\ u_{\pi(d),l}\neq 0}}Q_j(l)  R_L(n_{\pi(d),l})=0.   
\end{equation}
Again by Lemma~\ref{lem:qjlsumnew}, it suffices to show that $1_{\pi(d)}\notin D^{(0)}_{j,\pi(d)}$ to prove \eqref{type4goal2}. It follows from the inequalities $d>\pi\i(j)$ and $j<\pi(d-1)$ in (\ref{Type4ineq}) $1_{\pi(d-1)}$ is below and to the right of $1_j$, hence is an element of $D(\nprj)$. It also follows from \eqref{Type4ineq} that $d-1\geq \pi\i(r)$, so $1_{\pi(d-1)}$ is either equal to $1_r$ or below $1_r$. In either case, $n_{\pi(d-1),r}$ is nonzero. Hence by Definition~\ref{Odef}, $n_{\pi(d-1),r}\in O(\nprj)$. This shows that $1_{\pi(d)}\notin D_1(\nprj)$. By Definition~\ref{Djl0}, we conclude that $1_{\pi(d)}\notin D^{(0)}_{j,\pi(d)}$. This finishes the proof.
\end{proof}
At this point, we have covered all of the four possible types of $x_d(j)$. This finishes the proof of Proposition~\ref{Qjllem}.

\appendix
\section{Proof of technical lemmas}\label{sec:technical}
\subsection{Proof of Corollary~\ref{RLwRwtilde}} We give here the proof of the last case of \eqref{RLwRwtilde0}, in which the top destination $1_b$ of $n_{a,b}$ is below $1_r$. We omit the proof of the first two cases because they are routine and use the same logic. By Definition~\ref{rhodef}, we have $\rho^{(w)}(1_\mu)=\rho^{(\wtilde)}(1_\mu)n_{\mu,r}$ if $1_\mu$ is below $1_r$, and $\rho^{(w)}(1_\mu)=\rho^{(\wtilde)}(1_\mu)$ if $1_\mu$ is above $1_r$. Recall Definition~\ref{def:RLR1R2}. Using superscript $(w)$, write
    \[R_L^{(w)}(n_{a,b})=(-1)^{t^{(w)}_{a,b}}
    \frac{P_L^{(w)}(n_{a,b})}{\prod\limits_{1_\mu\in D^{(w)}(n_{a,b})}\rho^{(w)}(1_\mu)}, \quad t^{(w)}_{a,b}=\lvert D^{(w)}(n_{a,b}) \rvert -1.\]
    Since $1_{r}\notin D^{(w)}(n_{a,b})$, we have $D^{(w)}_{a,b}=D^{(\wtilde)}_{a,b}$ and $t^{(w)}_{a,b}=t^{(\wtilde)}_{a,b}$. Let 
    \[S_1=\prod\limits_{1_\mu\in D^{(w)}(n_{a,b})} \rho^{(w)}(1_\mu) \eqand S_2=\prod\limits_{n_{\b,r}\in O^{(w)}_{\downarrow, \pi^{-1}(r)+1}(n_{a,b})} n_{\b,r}=\prod\limits_{n_{\b,r}\in O^{(w)}(n_{a,b})}n_{\b,r}.\] 
    The second equality of $S_2$ follows from the assumption that $1_b$ is below $1_r$.
    The bottom origin $n_{a,r}$ of $n_{a,b}$ is not in the product $S_1$ but is in the product $S_2$. On the other hand, $n_{b,r}$, the rightmost element of the row containing the top destination $1_b$ of $n_{a,b}$, is in the product $S_1$ but is not in the product $S_2$. If $\mu$ satisfies $\mu\neq b$ and $1_{\mu}\in D^{(w)}(n_{a,b})$, then $n_{\mu,r}$ is both in $S_1$ and $S_2$, and satisfy $\rho^{(w)}(1_\mu)=\rho^{(\wtilde)}(1_{\mu})n_{\mu,r}$. From \eqref{PLwPwtilde} we deduce that $R^{(w)}_L(n_{a,b})$ equals
    \[R^{(w)}_L(n_{a,b})=(-1)^{t^{(w)}_{a,b}}\frac{S_2}{S_1}P^{(\wtilde)}(n_{a,b})=\frac{n_{a,r}}{n_{b,r}}(-1)^{t^{(\wtilde)}_{a,b}}\frac{P^{(\wtilde)}(n_{a,b})}{\prod\limits_{1_\mu\in D^{(\wtilde)}(n_{a,b})}\rho^{(\wtilde)}(1_\mu)}.\]
    This finishes the proof. \qed
\subsection{Proof of Corollary~\ref{auxlem}}
 To simplify notations, for $1\leq q\leq h$ let $P_L(\downarrow,q)$ denote the $P_L$-polynomial in the second line of \eqref{altexpansion}, and $P_L(\uparrow,q)$ denote the $P_L$-polynomial in the last line of \eqref{altexpansion} without the superscript $\star$. Also, write 
$O^{(q)}=O\smallsetminus \bigcup_{1\leq q_1<q}\{\gamma(1_{l(q_1)})\}$ and $D^{(q)}=D\smallsetminus \bigcup_{1\leq q_1<q}\{1_{l(q_1)}\}$.
Observe that $O^{(1)}=O$, $D^{(1)}=D$, $O^{(j)}\smallsetminus \{n_{l(j),r}\}=O^{(j+1)}$, and $D^{(j)}\smallsetminus \{1_{l(j)}\}=D^{(j+1)}$. For $1\leq j\leq h$, let $S(j)$ be the function defined by
\begin{equation}\label{Sjdef}
    \begin{aligned}
&\sum_{q=1}^{j-1} \Bigl(\prod_{1\leq q_1<q} \rho(1_{l(q_1)})\Bigr) (-1)^{q-1}P_L(\downarrow,q)P_L(\uparrow,q)+\Bigl(\prod_{1\leq q_1<j} \rho(1_{l(q_1)})\Bigr) (-1)^{j-1} P_L(O^{(j)}\to D^{(j)}).
    \end{aligned}
\end{equation}
Observe that $S(1)$ equals $P_L(O\to D)$ and $S(h)$ equals the sum \eqref{altexpansion}. Hence the corollary follows once we show that $S(j)=S(j+1)$ for all $j<h$. To show this, we apply Lemma~\ref{lem:firststep} to the polynomial $P_L(O^{(j)}\to D^{(j)})$. This gives \begin{equation}\label{PLjexp}
\begin{aligned}
    &P_L(O^{(j)}\to D^{(j)}) =P_L(\downarrow,j)\cdot P_L(\uparrow,j)-\rho(1_{l(j)})\cdot P_L(O^{(j+1)}\to D^{(j+1)}).
\end{aligned}
\end{equation}
Replacing $P_L(O^{(j)}\to D^{(j)})$ in $S(j)$ with the right hand side of \eqref{PLjexp} gives $S(j+1)$. \qed

\subsection{Lemmas from section~\ref{rowcolopproof}} Our final task is to prove Lemmas~\ref{lem:qjlsumnew} and~\ref{lem:npdj}. We first prove Lemma~\ref{lem:npdj}.
\begin{proof}[Proof of Lemma~\ref{lem:npdj}]
The first equation $D_{\uparrow,d}(\nprj)=D(\npdj)$ follows directly from Definition~\ref{mwna}. For the second equation, recall Definition~\ref{Djl0}. Suppose that $1_{\mu}\in D_{j,\pi(d)}^{(0)}$. Since $1_{\mu}$ is in or above the $d$-th row and is an element of $D(\nprj)$, we have $1_{\mu}\in D(\npdj)$. Together with the fact that $1_\mu\in D_1(\nprj)$, this implies $1_\mu\in D_1(\npdj)$. Thus $D_{j,\pi(d)}^{(0)}\subseteq D_1(\npdj)$. Conversely, suppose that $1_{\mu}\in D_1(\npdj)$. Then it is clear that $1_{\mu}\in D_1(\nprj)$. Also, since $1_{\mu}$ is in $D(\npdj)$, $1_{\mu}$ is above the $d$-th row of $wn$, and in or to the right of $j$-th column of $wn$. Since $\pi(d)<j$, $1_{\mu}$ is to the right of the $\pi(d)$-th column. This shows that $1_{\mu}\in D_{\nearrow,\pi(d)}(\nprj)$, thus $1_{\mu}\in D^{(0)}_{j,\pi(d)}$. We conclude that $D_1(\npdj)=D^{(0)}_{j,\pi(d)}$.
\end{proof}
Lemma~\ref{lem:qjlsumnew} requires several lemmas. Recall Definition~\ref{Djlk2} of the sets $\djlktwo$ and $\ojlktwo$.
\begin{lem}\label{lem:PLO2D2}
    Let $2\leq j\leq r-2$ satisfy $n_{\pi(r),j}\in V_w$, $l\leq r$ satisfy $n_{l,r}\in V_w$, and $k\leq r$ satisfy \begin{equation}\label{eq:kcondition}
        1_{\pi(k)}
    \in D_{\nearrow,l}(\nprj)\smallsetminus \{1_j,1_l\}\quad \text{and}\quad n_{\pi(k),r}\in V_w.
    \end{equation}
We have the following:
    \begin{enumerate}
        \item[(i)] The entry $1_{\pi(k)}$ is an element of the set $\djlktwo$, and is in the highest row among all the elements of $\djlktwo$.
        \item[(ii)] Let $m=\ejlktwo=\lvert\djlktwo\rvert$. Write $\djlktwo=\{1_{\l(1)},\ldots,1_{\l(m)}\}$
    with $\l(1)<\cdots<\l(m)$, and let $1\leq h\leq m$ satisfy $\l(h)=\pi(k)$.
    We have
   \begin{equation}\label{eq:PLO2D2exp}
\begin{aligned}
   & (-1)^{\ejlktwo} \frac{P_L\bigl(O^{(2)}_{j,l,k}\to D^{(2)}_{j,l,k}\bigr)}{\prod\limits_{1_\mu\in  D^{(2)}_{j,l,k}}\rho(1_\mu)}=-\sum_{q=1}^{h}\ \biggl((-1)^{\e^{(2)}_{j,\l(q),k}} \frac{P^{\star}_L\bigl(O^{(2)}_{j,\l(q),k}\to D^{(2)}_{j,\l(q),k}\bigr)}{\prod\limits_{1_\mu\in  D^{(2)}_{j,\l(q),k}}\rho(1_\mu)} R_L\left(n_{l,\l(q)}\right)\biggr).
    \end{aligned}
\end{equation}
\end{enumerate}
\end{lem}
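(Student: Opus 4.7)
Part (i) follows from a direct unwinding of definitions. Since $1_{\pi(k)}$ lies in row $k$, we have $1_{\pi(k)} \in D_{\downarrow,k}(n_{\pi(r),j})$; combined with the hypothesis $1_{\pi(k)} \in D_{\nearrow,l}(n_{\pi(r),j}) \setminus \{1_l\}$, this places $1_{\pi(k)}$ in $\djlktwo$. Moreover, every element of $\djlktwo$ lies in $D_{\downarrow,k}(n_{\pi(r),j})$, hence in row $\geq k$, so $1_{\pi(k)}$ occupies the highest row among elements of $\djlktwo$.

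For part (ii), the plan is to apply Corollary~\ref{auxlem} to $P_L(\ojlktwo \to \djlktwo)$. I would first verify that $\djlktwo$ is a $P_L$-decomposable set with respect to $d = \pi^{-1}(l)$ in the sense of Definition~\ref{def:PLdecomp}, and that $\ojlktwo$ matches the associated origin set of Definition~\ref{def:PLdecomporigin}. Decomposability is immediate because $D_{\nearrow,l}(n_{\pi(r),j}) \setminus \{1_l\}$ consists of entries strictly above and strictly to the right of $1_l$. The identification of origins uses the assumption $n_{l,r} \in V_w$, which yields $\gamma(1_l) = n_{l,r}$, together with part (i), which identifies $1_{\pi(k)}$ as the highest element of $\djlktwo$ with $\gamma(1_{\pi(k)}) = n_{\pi(k),r}$.

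Next, I would match the expansion furnished by Corollary~\ref{auxlem}, indexed by $q \in \{1,\ldots,h\}$, term by term with the right-hand side of \eqref{eq:PLO2D2exp}. For each such $q$, the ``above'' factor $P_L^\star$ in the corollary is identified with $P_L^\star(O^{(2)}_{j,\lambda(q),k} \to D^{(2)}_{j,\lambda(q),k})$: after removing $\{1_{\lambda(q_1)} : q_1 < q\}$ from $\djlktwo$ (all having column $\lambda(q_1) < \lambda(q)$) and restricting to rows strictly above $\pi^{-1}(\lambda(q))$, one is left with exactly the entries strictly above and strictly to the right of $1_{\lambda(q)}$ in rows $\geq k$, and the corresponding origin sets match via $\gamma$. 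The ``below'' factor is identified with $P_L^{(w)}(n_{l,\lambda(q)})$ by checking that the restricted destination set in the corollary coincides with $D(n_{l,\lambda(q)})$ and the origins match analogously, using $\gamma(1_l) = n_{l,r}$ for the bottom origin. The main obstacle will be the sign and $\rho$-factor bookkeeping: the prefactor $(-1)^{q-1}\prod_{q_1<q}\rho(1_{\lambda(q_1)})$ from the corollary, together with the denominator $\prod_{1_\mu\in\djlktwo}\rho(1_\mu)$ and the definition $R_L(n_{l,\lambda(q)}) = (-1)^{|D(n_{l,\lambda(q)})|-1}P_L^{(w)}(n_{l,\lambda(q)})/\prod_{1_\mu\in D(n_{l,\lambda(q)})}\rho(1_\mu)$, must recombine to produce the sign $(-1)^{\varepsilon^{(2)}_{j,\lambda(q),k}}$, the denominator $\prod_{1_\mu\in D^{(2)}_{j,\lambda(q),k}}\rho(1_\mu)$, and the overall minus sign on the right-hand side of \eqref{eq:PLO2D2exp}. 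Once this parity and cancellation check is verified for each $q$, the lemma follows.
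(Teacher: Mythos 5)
Your proposal is correct and takes essentially the same approach as the paper: you verify part (i) by unwinding Definition~\ref{Djlk2}, then prove part (ii) by checking that $D^{(2)}_{j,l,k}$ is a $P_L$-decomposable set with respect to $d=\pi^{-1}(l)$ with $O^{(2)}_{j,l,k}$ as its associated origin set, applying Corollary~\ref{auxlem}, identifying the ``above'' and ``below'' factors in each summand with $P_L^{\star}(O^{(2)}_{j,\lambda(q),k}\to D^{(2)}_{j,\lambda(q),k})$ and $P_L(n_{l,\lambda(q)})$, and then doing the sign and $\rho$-factor bookkeeping — which is exactly how the paper proceeds, relying on the partition $D^{(2)}_{j,l,k}=D^{(2)}_{j,\lambda(q),k}\sqcup D(n_{l,\lambda(q)})\sqcup\bigcup_{q_1<q}\{1_{\lambda(q_1)}\}$ to collect the powers of $(-1)$ and the products of $\rho$.
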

\begin{proof} Observe that \eqref{eq:kcondition} implies $1_{\pi(k)}\in D_{\downarrow,k}(\nprj)$. Part (i) is a direct consequence of this fact and Definition~\ref{Djlk2}. For part (ii), observe that the set $\djlktwo$ is a $P_L$-decomposable set with respect to $d=\pi\i(l)$ defined in Definition~\ref{def:PLdecomp} because every element of $\djlktwo$ is above and to the right of $1_l$. Also, $\ojlktwo$ defined in Definition~\ref{Djlk2} agrees with the definition given in \eqref{def:PLdecompO}, by part (i) of this lemma. Observe that since $n_{\pi(k),r}\in V_w$, every element of $\ojlktwo$, which is below the $k$-th row of $wn$, is also a free variable. By Corollary~\ref{auxlem}, the polynomial
$P_L(\ojlktwo\to\djlktwo)$ equals
\begin{equation}\label{eq:PLO2D2exp1}
\begin{aligned}
    &\sum_{q=1}^{h} \Bigl(\prod_{1\leq q_1<q} \rho(1_{\l(q_1)})\Bigr)\cdot (-1)^{q-1} \cdot P_L\bigl(O(\downarrow,q)\to D(\downarrow,q)\bigr) \cdot P_L^{\star} \bigl(O(\uparrow,q) \to D(\uparrow,q)\bigr),
\end{aligned}\end{equation}
where
\begingroup
\allowdisplaybreaks
\begin{align*}
    O(\downarrow,q)&=\{n_{x,r}\in \ojlktwo \mid \pi\i(x)\geq \pi\i\bigl(\l(q)\bigr)+1\}\smallsetminus \bigcup_{1\leq q_1<q}\{\gamma(1_{\l(q_1)})\},  \\
D(\downarrow,q)&=\{1_{\l(b)}\in \djlktwo \mid \pi\i\bigl(\l(b)\bigr)\geq \pi\i\bigl(\l(q)\bigr)\}\smallsetminus \bigcup_{1\leq q_1<q}\{1_{\l(q_1)}\},  \\
O(\uparrow,q)&=\{n_{x,r}\in \ojlktwo \mid \pi\i(x)\leq \pi\i\bigl(\l(q)\bigr)\}\smallsetminus \bigcup_{1\leq q_1<q}\{\gamma(1_{\l(q_1)})\},  \\
D(\uparrow,q)&=\{1_{\l(b)}\in \djlktwo \mid \pi\i\bigl(\l(b)\bigr)\leq \pi\i\bigl(\l(q)\bigr)-1\}\smallsetminus \bigcup_{1\leq q_1<q}\{1_{\l(q_1)}\}.
\end{align*}
\endgroup
With Definition~\ref{Djlk2}, it is straightforward to check that we have
\[D(\downarrow,q) =D(n_{l,\l(q)}), \quad O(\downarrow,q) =O(n_{l,\l(q)}), \quad D(\uparrow,q)=D^{(2)}_{j,\l(q),k}, \quad O(\uparrow,q)=O^{(2)}_{j,\l(q),k} \]
for each $1\leq q\leq h$ (see below \cite[(5.98)]{kimWhit} for a detailed proof). This gives
\begin{equation}\label{eq:PLO2D2exp2}
\begin{aligned}
    P_L (O^{(2)}_{j,l,k}\to D^{(2)}_{j,l,k}) &=\sum_{q=1}^{h}\Bigl(\prod_{1\leq q_1<q}\rho(1_{\l(q)})\Bigr) (-1)^{q-1}  P_L(n_{l,\l(q)})P^{\star}_L(O^{(2)}_{j,\l(q),k}\to D^{(2)}_{j,\l(q),k}).
\end{aligned}
\end{equation}
Finally, to obtain the formula (\ref{eq:PLO2D2exp}), we divide both sides of (\ref{eq:PLO2D2exp2}) by
\begin{equation}\label{dividingfactor}
    (-1)^{\e^{(2)}_{j,l,k}}\prod\limits_{1_\mu\in  D^{(2)}_{j,l,k}}\rho(1_\mu).
\end{equation}
Since the sets $D(\downarrow,q)$, $D(\uparrow,q)$, and $\bigcup_{1\leq q_1 <q} \{1_{\lambda(q_1)}\}$ with $1\leq q\leq h$ partition $\djlktwo$, we have
\[
\djlktwo= D^{(2)}_{j,\l(q),k}\sqcup D(n_{l,\l(q)})\sqcup\bigcup_{1\leq q_1<q}\{1_{\l(q_1)}\}.\]
This implies $\e^{(2)}_{j,l,k}=\e^{(2)}_{j,\l(q),k}+\lvert D(n_{l,\l(q)}) \rvert+q-1$,
and
\begin{equation}\label{eq:D2rhoprod}
    \prod\limits_{1_{\mu}\in D^{(2)}_{j,l,k}}\rho(1_\mu)= \prod\limits_{1_{\mu}\in D^{(2)}_{j,\l(q),k}}\rho(1_\mu)\cdot\prod\limits_{1_{\mu}\in D(n_{l,\l(q)})}\rho(1_\mu)\cdot\prod\limits_{1\leq q_1<q}\rho(1_{\l(q_1)}).
\end{equation}
Let $t_{l,\l(q)}=\lvert D(n_{l,\l(q)}) \rvert-1$. Dividing both sides of (\ref{eq:PLO2D2exp2}) by the product (\ref{dividingfactor}) and then applying \eqref{eq:D2rhoprod} gives \eqref{eq:PLO2D2exp}.
\end{proof}

\begin{lem}\label{lem:Djpdzindex}
Let $j$ and $d$ satisfy $2\leq j\leq r-2$, $\nprj\in V_w$, $d>\max\bigl(\pi\i(j),\pi\i(r)\bigr)$, and $D^{(0)}_{j,\pi(d)}\neq \emptyset$. Write
$D^{(0)}_{j,\pi(d)}=\left\{1_{\pi(\b_1)},1_{\pi(\b_2)},\ldots,1_{\pi(\b_s)}\right\}$, $\b_1<\b_2<\cdots<\b_s$.
\begin{enumerate}
    \item [(i)] We have $n_{\pi(d),r}\in V_w$, and each $1\leq m\leq s$ satisfies $n_{\pi(\b_m),r}\in V_w$. In particular, the set $D^{(2)}_{j,\pi(d),\b_m}$ is well-defined for each $1\leq m\leq s$.
    \item [(ii)] If $\b_m=d$ for some $1\leq m\leq s$, then $m$ necessarily equals $s$, and $D^{(2)}_{j,\pi(d),\b_m}=\emptyset$.
    \item [(iii)] For each $1\leq m\leq s$ such that $\b_m\neq d$, the integers $j$, $\pi(d)$ and $\b_m$ satisfy the hypotheses for $j$, $l$, and $k$ given in Lemma~\ref{lem:PLO2D2}, that is, $j$, $\pi(d)$ and $\b_m$ satisfy $n_{\pi(d),r}\in V_w$, $1_{\pi(\b_m)}
    \in D_{\nearrow,\pi(d)}(\nprj)\smallsetminus \{1_j,1_{\pi(d)}\}$, and $n_{\pi(\b_m),r}\in V_w$.
    \item [(iv)] If $1\leq m\leq s$ satisfies $\b_m\neq d$, then $1_{\pi(\b_m)}\in D^{(2)}_{j,\pi(d),\b_m}$, and $1_{\pi(\b_m)}$ is the element of $D^{(2)}_{j,\pi(d),\b_m}$ which is located in the highest row among all the elements.
    \end{enumerate}
    \end{lem}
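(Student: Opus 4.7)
The plan is to verify each of (i)--(iv) by directly unpacking the definitions, with part (iv) reducing to part (i) of Lemma~\ref{lem:PLO2D2}.

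For (i), the assumption $d>\pi\i(r)$ forces the row indexed $d$ to lie strictly below the row of $1_r$, so the entry $n_{\pi(d),r}$ of the rightmost column is a free variable. For each $1_{\pi(\b_m)}\in D^{(0)}_{j,\pi(d)}\subseteq D_1(n_{\pi(r),j})$, the second assertion of Lemma~\ref{lem:Djlz} gives $n_{\pi(\b_m),r}\in V_w$. Well-definedness of $D^{(2)}_{j,\pi(d),\b_m}$ then follows from Definition~\ref{Djlk2}, with $n_{\pi(d),r}$ playing the role of $n_{l,r}$.

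For (ii), every element of $D^{(0)}_{j,\pi(d)}\subseteq D_{\nearrow,\pi(d)}(n_{\pi(r),j})$ either equals $1_{\pi(d)}$ or lies strictly above and to the right of $1_{\pi(d)}$, so its row index is at most $d$. Since $\b_1<\cdots<\b_s$, the largest row index is $\b_s$, so $\b_m=d$ forces $m=s$. The emptiness of $D^{(2)}_{j,\pi(d),d}$ is immediate from Definition~\ref{Djlk2}: removing $1_{\pi(d)}$ from $D_{\nearrow,\pi(d)}(n_{\pi(r),j})$ leaves only entries in rows strictly above row $d$, whereas $D_{\downarrow,d}(n_{\pi(r),j})$ contains only entries in rows at or below $d$, so the intersection is empty.

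For (iii), the conditions $n_{\pi(d),r},n_{\pi(\b_m),r}\in V_w$ come from (i), the inequality $\b_m\neq d$ gives $1_{\pi(\b_m)}\neq 1_{\pi(d)}$, and the containment $1_{\pi(\b_m)}\in D_{\nearrow,\pi(d)}(n_{\pi(r),j})$ is part of the definition of $D^{(0)}_{j,\pi(d)}$. The main subtlety, which I expect to be the only real obstacle, is ruling out $1_{\pi(\b_m)}=1_j$. For this I will observe that the row of the top destination $1_j$, namely $\pi\i(j)$, is strictly less than $r$ (since $n_{\pi(r),j}$ is a free variable below $1_j$), and that by Definition~\ref{Odef} every origin of $n_{\pi(r),j}$ lies either in the bottom row $r$ of $M_w(n_{\pi(r),j})$ or in a row containing a non-top destination. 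Hence row $\pi\i(j)$ contains no element of $O(n_{\pi(r),j})$, and in particular no element of $O_1(n_{\pi(r),j})$, so Definition~\ref{D1} gives $1_j\notin D_1(n_{\pi(r),j})\supseteq D^{(0)}_{j,\pi(d)}$. Part (iv) is then an immediate application of Lemma~\ref{lem:PLO2D2}(i) with $l=\pi(d)$ and $k=\b_m$, which directly yields both the membership $1_{\pi(\b_m)}\in D^{(2)}_{j,\pi(d),\b_m}$ and the highest-row property.
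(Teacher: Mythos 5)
Your proof is correct and takes essentially the same route as the paper: unpacking Definitions~\ref{Djl0}, \ref{Djlk2}, \ref{O1}, \ref{D1}, and \ref{Odef}, then invoking Lemma~\ref{lem:PLO2D2}(i) for part (iv). The only place you go into more detail than the paper is in ruling out $1_{\pi(\b_m)}=1_j$ for part (iii), where the paper compresses the argument to ``Definitions~\ref{O1} and~\ref{D1} imply that $1_{\pi(\b_m)}$ is below the row containing $1_j$''; your expanded version, observing that the row $\pi^{-1}(j)$ of the top destination can contain no origin of $n_{\pi(r),j}$ and hence no element of $O_1(n_{\pi(r),j})$, is exactly the reasoning behind that terse remark.
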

\begin{proof}
The inequality $d>\pi\i(r)$ implies $n_{\pi(d),r}\in V_w$. Also, it follows from Definition~\ref{Djl0} that $1_{\pi(\b_m)}\in D_1(\nprj)$, hence by the remark below Definition~\ref{O1} we have $n_{\pi(\b_m),r}\in V_w$. This proves (i). For (ii), observe that every element of $D_{j,\pi(d)}^{(0)}$ is in or above the $d$-th row, hence if $1_{\pi(\b_m)}=1_{\pi(d)}$ then $1_{\pi(\b_m)}$ must be the element that is located in the lowest row among all elements, which implies $m=s$. Also, observe from Definition~\ref{Djlk2} that if $1_{\mu}\in D^{(2)}_{j,\pi(d),\b_m}$ then $1_{\mu}$ is above the $d$-th row of $wn$, and in or below the $\b_m$-th row. Therefore, if $\b_m=d$ then $D^{(2)}_{j,\pi(d),\b_m}$ is an empty set. This proves (ii). For part (iii), observe that Definitions~\ref{O1} and~\ref{D1} imply that $1_{\pi(\b_m)}$ is below the row containing $1_j$, and that Definition~\ref{Djl0} implies that $1_{\pi(\b_m)}$ is an element of $D_{\nearrow,\pi(d)}(\nprj)$. Together with part (i), this proves part (iii). Part (iv) is a direct consequence of part (iii) of this lemma and part (i) of Lemma~\ref{lem:PLO2D2}.
\end{proof}

\begin{lem}\label{lem:sumrearrange}
Let $j$ and $d$ satisfy $2\leq j\leq r-2$, $\nprj\in V_w$, $d>\max\bigl(\pi\i(j),\pi\i(r)\bigr)$, and $D^{(0)}_{j,\pi(d)}\neq \emptyset$. Write
$D^{(0)}_{j,\pi(d)}=\{1_{\pi(\b_1)},1_{\pi(\b_2)},\ldots,1_{\pi(\b_s)}\}$, $\b_1<\b_2<\cdots<\b_s$. For each $1\leq m\leq s$ such that $\b_m\neq d$, let $c_m=\lvert D^{(2)}_{j,\pi(d),\b_m}\rvert$ and write
    \[
D^{(2)}_{j,\pi(d),\b_m}=\left\{1_{\l_m(1)},1_{\l_m(2)},\ldots,1_{\l_m(c_m)}\right\}, \quad \l_m(1)<\l_m(2)<\cdots<\l_m(c_m).\]
Let $1\leq h_m\leq c_m$ be the integer that satisfies $1_{\l_m(h_m)}=1_{\pi(\b_m)}$. Let
\[\begin{aligned}
    A&=\{(a_1,a_2)\mid a_1=\b_m \text{ and } a_2=\l_m(q) \text{ for some } 1\leq m\leq s, \, \b_m\neq d,\, 1\leq q\leq h_m\}, \\
    B&=\{(b_1,b_2)\mid \max\bigl(j,\pi(d)\bigr)<b_2\leq r-1, \, u_{\pi(d),b_2}\neq 0, \, 1_{\pi(b_1)}\in D^{(0)}_{j,b_2}\}. 
\end{aligned}\]
We have $A=B$.
\end{lem}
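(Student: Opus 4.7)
The identity $A=B$ is purely set-theoretic and is proved by double inclusion, unpacking the definitions of $D^{(0)}_{j,\cdot}$, $D^{(2)}_{j,\pi(d),\cdot}$, $D_{\nearrow,\cdot}$, and $D_{\downarrow,\cdot}$, with Lemmas~\ref{lem:Djpdzindex} and~\ref{lem:PLO2D2}(i) as the structural input.

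For $A\subseteq B$, fix $(\beta_m,\lambda_m(q))\in A$ and check each condition defining $B$. The bound $\lambda_m(q)\leq r-1$ reduces to $\pi(\beta_m)<r$; this holds because $n_{\pi(\beta_m),r}$ lies in $O_1$ and must therefore be a free variable, whereas $n_{r,r}=1_r$ is not. The nonvanishing $u_{\pi(d),\lambda_m(q)}\neq 0$ follows from $1_{\lambda_m(q)}\in D_{\nearrow,\pi(d)}\smallsetminus\{1_{\pi(d)}\}$, which gives both $\pi(d)<\lambda_m(q)$ and $\pi^{-1}(\lambda_m(q))<d$, hence $n_{\pi(d),\lambda_m(q)}\in V_w$. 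The membership $1_{\pi(\beta_m)}\in D^{(0)}_{j,\lambda_m(q)}$ splits on $q$: when $q=h_m$ it is trivial, and when $q<h_m$ the highest-row property of Lemma~\ref{lem:Djpdzindex}(iv) places $1_{\pi(\beta_m)}$ strictly northeast of $1_{\lambda_m(q)}$; $D_1$-membership is inherited from $\beta_m$.

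The delicate point is the strict inequality $\lambda_m(q)>j$: the weaker $\lambda_m(q)\geq j$ is immediate from the columns of destinations, and I would rule out $\lambda_m(q)=j$ by contradiction. If $1_j\in D^{(2)}_{j,\pi(d),\beta_m}$, then $1_j\in D_{\downarrow,\beta_m}$ gives $\beta_m\leq\pi^{-1}(j)$, while $1_{\pi(\beta_m)}\in D(n_{\pi(r),j})$ gives $\beta_m\geq\pi^{-1}(j)$. These force $\beta_m=\pi^{-1}(j)$, so $1_{\pi(\beta_m)}$ and $1_j$ share a row of $w$ and must coincide, giving $\pi(\beta_m)=j$. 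This contradicts $1_{\pi(\beta_m)}\in D_1$, since $1_j$ is the top destination of $n_{\pi(r),j}$ and therefore never lies in $D_1$.

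For $B\subseteq A$, fix $(b_1,b_2)\in B$ and put $a_1=b_1$, $a_2=b_2$. From $u_{\pi(d),b_2}\neq 0$ together with $b_2>\pi(d)$ one reads $(\pi(d),b_2)\in Inv(\pi^{-1})$, i.e.\ $\pi^{-1}(b_2)<d$. Splitting on whether $\pi(b_1)=b_2$ or $\pi(b_1)>b_2$ (the two subcases of $1_{\pi(b_1)}\in D_{\nearrow,b_2}$), both yield $b_1<d$ and $\pi(b_1)\geq b_2>\pi(d)$, so $1_{\pi(b_1)}$ sits northeast of $1_{\pi(d)}$ and thus in $D^{(0)}_{j,\pi(d)}$; this identifies $b_1$ as some $\beta_m$ with $\beta_m\neq d$. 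For the inner index, I would first observe that $1_{b_2}$ is itself a destination: $1_{\pi(b_1)}\in D(n_{\pi(r),j})$ gives $b_1\geq\pi^{-1}(j)$, and $b_1\leq\pi^{-1}(b_2)$ in both subcases yields $\pi^{-1}(b_2)\geq\pi^{-1}(j)$, placing $1_{b_2}$ in $M_w(n_{\pi(r),j})$. The memberships $1_{b_2}\in D_{\nearrow,\pi(d)}\smallsetminus\{1_{\pi(d)}\}$ and $1_{b_2}\in D_{\downarrow,b_1}$ now follow from the inequalities already recorded, and $b_2\leq\pi(b_1)=\pi(\beta_m)=\lambda_m(h_m)$ identifies $b_2$ with $\lambda_m(q)$ for some $q\leq h_m$. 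The main obstacle throughout is the contradiction ruling out $\lambda_m(q)=j$; the rest is careful bookkeeping through the nested subset definitions.
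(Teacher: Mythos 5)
Your proof is correct and follows essentially the same double-inclusion strategy as the paper: unpack $D^{(0)}_{j,\cdot}$, $D^{(2)}_{j,\pi(d),\cdot}$, $D_{\nearrow,\cdot}$, and $D_{\downarrow,\cdot}$ to verify each condition, using Lemma~\ref{lem:Djpdzindex} for the structural facts. The only cosmetic difference is that you rule out $\lambda_m(q)=j$ by a contradiction passing through $\beta_m=\pi^{-1}(j)$ and $1_j\notin D_1(n_{\pi(r),j})$, whereas the paper derives $\pi^{-1}(a_2)\geq\beta_m>\pi^{-1}(j)$ directly and concludes $a_2>j$ since $1_{a_2}$ then lies strictly below $1_j$ yet is a destination of $n_{\pi(r),j}$; both hinge on the same fact that $D_1$ excludes the top destination, so this is not a genuinely different route.
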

\begin{proof}
    Let $(a_1,a_2)\in A$. By Definition~\ref{Djl0}, we have
\begin{equation}\label{eq:1pibm}
   1_{\pi(a_1)}= 1_{\pi(\b_m)}\in D_{j,\pi(d)}^{(0)}=D_1(n_{\pi(r),j})\cap D_{\nearrow,\pi(d)}(n_{\pi(r),j}),
\end{equation} 
and by Definition~\ref{Djlk2} we have
\begin{equation}\label{eq:1lmq}
    1_{a_2}=1_{\l_m(q)}\in D^{(2)}_{j,\pi(d),\b_m}=\bigl(D_{\nearrow, \pi(d)}(\nprj)\smallsetminus \{1_{\pi(d)}\}\bigr)\cap D_{\downarrow,\b_m}(\nprj).
\end{equation}
It follows from Definition~\ref{Dne} and \eqref{eq:1lmq} that $a_2>\pi(d)$ and $\pi\i(a_2)>d$.
Therefore $n_{\pi(d),a_2}\in V_w$, hence $u_{\pi(d),a_2}\neq 0$. Also, it follows from (\ref{eq:1lmq}) that $\pi\i(a_2)\geq \b_m$. Since $1_{\pi(\b_m)}$ is an element of $D_1(\nprj)$, we have $n_{\pi(\b_m),r}\in O_1(\nprj)$. Hence $1_{\pi(\b_m)}$ is below the row containing $1_j$ and the row containing $1_r$. By \eqref{eq:1lmq}, $1_{a_2}$ is below and to the right of $1_j$. Hence $a_2>j$ and $\pi\i(a_2)\geq \b_m>\pi\i(r)$. Thus $\max\bigl(j,\pi(d)\bigr)<a_2\leq r-1$. To show that $(a_1,a_2)\in B$, it remains to show that $1_{\pi(a_1)}\in D^{(0)}_{j,a_2}$. By \eqref{eq:1pibm} we have $1_{\pi(a_1)}\in D_1(\nprj)$, and by \eqref{eq:1lmq} we have $\pi\i(a_2)\geq \b_m=a_1$. Finally, we have $a_2=\l_m(q)\leq \l_m(h_m)=\pi(\b_m)=\pi(a_1)$. This shows that $1_{\pi(a_1)}\in D_{\nearrow,a_2}(n_{\pi(r),j})$. By Definition~\ref{Djl0}, and \eqref{eq:1pibm}, we conclude that $1_{\pi(a_1)}\in D^{(0)}_{j,a_2}$. This proves $A\subseteq B$. \par
Conversely, assume that $(b_1,b_2)\in B$. The conditions $\max\bigl(j,\pi(d)\bigr)<b_2$ and $u_{\pi(d),b_2}\neq 0$ together imply $d>\pi\i(b_2)$ and $\pi(d)<b_2$. This implies $D_{\nearrow,b_2}(\nprj)\subseteq D_{\nearrow,\pi(d)}(\nprj)$, consequently $D^{(0)}_{j,b_2}\subseteq D^{(0)}_{j,\pi(d)}$. Also, by Definition~\ref{Djl0}, we have $1_{\pi(b_1)}\in D_{\nearrow,b_2}(n_{\pi(r),j})$, hence $\pi(b_1)\geq b_2$ and $b_1\leq \pi\i(b_2)$. Therefore $1_{\pi(b_1)}\in D^{(0)}_{j,\pi(d)}$ and $b_1\neq \pi(d)$. Therefore $b_1=\b_m$ for some $1\leq m\leq s$, $\b_m\neq d$.
\par
Recall Definition~\ref{Djlk2}. Since $\max\bigl(j,\pi(d)\bigr)<b_2$, $1_{b_2}$ is to the right of $1_j$. Also, we have $1_{\pi(\b_m)}=1_{\pi(b_1)}\in D^{(0)}_{j,b_2}\subseteq D(\nprj)$, and this implies $\pi\i(j)\leq \b_m$ and $j\leq \pi(\b_m)$. It follows that $\pi\i(j)\leq \b_m\leq \pi\i(b_2)$. Thus $1_{b_2}$ is below and to the right of $1_j$, so $1_{b_2}\in D(\nprj)$. Together with the inequalities $d>\pi\i(b_2)$, $\pi(d)<b_2$, and $\b_m\leq \pi\i(b_2)$, we deduce
\[1_{b_2}\in \bigl(D_{\nearrow,\pi(d)}(\nprj)\smallsetminus \{1_{\pi(d)}\}\bigr)\cap D_{\downarrow,\b_m}(\nprj)=D_{j,\pi(d),\b_m}^{(2)}.\]
Hence $b_2=\l_m(q)$ for some $1\leq q\leq c_m$. Since $b_2\leq \pi(b_1) = \pi(\b_m)=\l_m(h_m)$, we have $q\leq h_m$. We conclude that $(b_1,b_2)\in A$. Therefore $A=B$.
\end{proof}
We are now ready to prove Lemma~\ref{lem:qjlsumnew}.
\begin{proof}[Proof of Lemma~\ref{lem:qjlsumnew}]
Write
\[
D^{(0)}_{j,\pi(d)}=\left\{1_{\pi(\b_1)},1_{\pi(\b_2)},\ldots,1_{\pi(\b_s)}\right\}, \quad \b_1<\b_2<\cdots<\b_s
\]
and
\begin{equation*}
D^{(0)}_{j,\pi(d)}\smallsetminus \{1_{\pi(d)}\}=\left\{1_{\pi(\b_1)},\b_2,\ldots,1_{\pi(\b_{s'})}\right\}, \quad \b_1<\b_2<\cdots<\b_{s'}.
\end{equation*}
It follows from part (ii) of Lemma~\ref{lem:Djpdzindex} that if $1_{\pi(d)}\in D^{(0)}_{j,\pi(d)}$ then we have $s'=s-1$, $\b_s=d$, $D^{(2)}_{j,\pi(d),\b_s}=\emptyset$, and $\varepsilon^{(2)}_{j,\pi(d),\b_s}=\lvert D^{(2)}_{j,\pi(d),\b_s}\rvert =0$. Otherwise, we have $s'=s$. Therefore, by Definitions~\ref{PLstarsum} and~\ref{def:colfm}, the sum
\begin{equation}\label{qjlsum1}
\sum_{m=1}^{s'} \biggl( (-1)^{\e^{(2)}_{j,\pi(d),\b_m}}\cdot n_{\pi(\b_m),r} \cdot u_{\pi(\b_m-1),j}\cdot \frac{ P_L^{\star}\bigl(O^{(2)}_{j,\pi(d),\b_m}\to D^{(2)}_{j,\pi(d),\b_m}\bigr)}{\prod\limits_{1_\mu\in D^{(2)}_{j,\pi(d),\b_m}}\rho(1_\mu)}\biggr) 
\end{equation}
equals $Q_j\left(\pi(d)\right)$ if $1_{\pi(d)} \notin D^{(0)}_{j,\pi(d)}$, and equals
$Q_j\left(\pi(d)\right)-n_{\pi(d),r}u_{\pi(d-1),j}$ if $1_{\pi(d)} \in D^{(0)}_{j,\pi(d)}$. Therefore, \eqref{eq:qjlsumnew} follows once we show that the sum \eqref{qjlsum1} equals
\[-\sum\limits_{\substack{\max(j,\pi(d))< l\leq r-1  \\ u_{\pi(d),l}\neq 0}} Q_j(l) R_L(\npdl).\]
To prove this, we use Lemma~\ref{lem:PLO2D2} to the function
\begin{equation}\label{qjlsumpart}(-1)^{\e^{(2)}_{j,\pi(d),\b_m}}\cdot\frac{P^{\star}_L\bigl(O^{(2)}_{j,\pi(d),\b_m}\to D^{(2)}_{j,\pi(d),\b_m}\bigr)}{\prod\limits_{1_{\mu}\in D^{(2)}_{j,\pi(d),\b_m}}\rho(1_\mu)}, \quad 1\leq m\leq s'.
\end{equation}
By part (iii) of Lemma~\ref{lem:Djpdzindex}, the indices $j$, $\pi(d)$ and $\b_m$ satisfy the necessary hypotheses for $j$, $l$, and $k$ given Lemma~\ref{lem:PLO2D2}, respectively, for any $1\leq m\leq s'$. Write
\[
D^{(2)}_{j,\pi(d),\b_m}=\{1_{\lambda_m (1)},\ 1_{\lambda_m(2)},\ \ldots,\ 1_{\lambda_m(c_m)}\}, \quad \lambda_m(1)<\lambda_m(2)<\cdots<\lambda_m(c_m).\]
By part (i) of Lemma~\ref{lem:PLO2D2}, $D^{(2)}_{j,\pi(d),\b_m}$ is nonempty, and $1_{\pi(\b_m)}$ is the highest element of the set $D^{(2)}_{j,\pi(d),\b_m}$. Hence we can remove the superscript $\star$ in the polynomial $P^\star_L$ in \eqref{qjlsumpart}. Let $1\leq h_m\leq c_m$ be the integer which satisfies $\lambda_m(h_m)=\pi(\b_m)$. Applying Lemma~\ref{lem:PLO2D2}, we deduce that the rational function \eqref{qjlsumpart} equals
\begin{equation}\label{eq:qjlO2D2}
\begin{aligned}
&-\sum_{q=1}^{h_m} \Bigl((-1)^{\e^{(2)}_{j,\l_m(q),\b_m}}
\cdot\frac{P_L^\star\bigl(O^{(2)}_{j,\lambda_m(q),\b_m} \to D^{(2)}_{j,\lambda_m(q),\b_m} \bigr)}{\prod\limits_{1_{\mu}\in D^{(2)}_{j,\lambda_m(q),\b_m}} \rho(1_\mu)} R_L\left(n_{\pi(d),\lambda_m(q)}\right)\Bigr).
\end{aligned}
\end{equation}
Substituting the rational function \eqref{qjlsumpart} with \eqref{eq:qjlO2D2},
we deduce that the sum \eqref{qjlsum1} equals \begin{equation}\label{qjlsumsimp}
-\sum_{m=1}^{s'} \sum_{q=1}^{h_m} \Bigl( (-1)^{\e^{(2)}_{j,\lambda_m(q),\b_m}} \cdot n_{\pi(\b_m),r} u_{\pi(\b_m-1),j} \cdot \frac{P_L^\star\bigl(O^{(2)}_{j,\lambda_m(q),\b_m} \to D^{(2)}_{j,\lambda_m(q),\b_m} \bigr)}{\prod\limits_{1_{\mu}\in D^{(2)}_{j,\lambda_m(q),\b_m}} \rho(1_\mu)} R_L\bigl(n_{\pi(d),\lambda_m(q)}\bigr)\Bigr).
\end{equation}
Next, we use Lemma~\ref{lem:sumrearrange}. Observe that \eqref{qjlsumsimp} is a sum over the elements of the set $A$ in Lemma~\ref{lem:sumrearrange}. Since $A=B$, we can rewrite \eqref{qjlsumsimp} as a sum over the elements $(k,l)$ of $B$. We deduce that \eqref{qjlsumsimp} equals
\begin{equation}\label{qjlsumsimp2}
    \begin{aligned}
&-\sum\limits_{\substack{\max(j,\pi(d)) <l\leq r-1  \\ u_{\pi(d),l}\neq 0}} \sum_{1_{\pi(k)}\in \djlz}\ \Bigl( (-1)^{\e^{(2)}_{j,l,k}}\cdot n_{\pi(k),r}u_{\pi(k-1),j} \cdot \frac{P_L^\star\bigl(O^{(2)}_{j,l,k} \to D^{(2)}_{j,l,k} \bigr)}{\prod\limits_{1_{\mu}\in D^{(2)}_{j,l,k}} \rho(1_\mu)} R_L\left(n_{\pi(d),l}\right)\Bigr).
\end{aligned}
\end{equation} 
By Definition~\ref{def:colfm}, the inner sum equals $Q_j(l)R_L(n_{\pi(d),l})$. This finishes the proof.
\end{proof}
\section{Algorithms}\label{sec:algorithm}
Fix a Weyl group element $w\in \Omega$ of $\GL(r)$. We give two algorithms that give the birational map $R=R_w$. 
\begin{subsection}{Algorithm by determinant simplification}
The following algorithm, developed by Miller in \cite{miller2008unpublished}, outputs $R(n_\a)$ for all $n_\a\in V_w$. The algorithm proceeds by simplifying the determinants of lower right subblock matrices of $wn$. \par
\underline{Step 1: $n_\a \rightsquigarrow x_\a$.}\par 
Consider each entry, going along each row from left to right, starting from the bottom row and ending at the top row.  If the entry is not one of the variables $n_\a$, do nothing.  Otherwise, select the square subblock of $A=wn$ whose bottom left entry is $n_\a$ and whose rightmost column coincides with the rightmost column of $A$; its determinant has the form $an_\a+b$ for some $a$ and $b$ which are polynomials in the variables $n_\beta$ which occur later in the sequence.  Shift the variable $n_\a$ by $n_\a\mapsto n_\a-\f ba$, so that the determinant of this block is now $(-1)^k n_\a \D$, where $k$ is the size of the subblock and $\D$ is the determinant of its $(k-1)\times (k-1)$ upper right minor.  As future $n_\beta$ which are part of this subblock are changed, $n_\a$ is also updated. At the end of this process, relabel the resulting expression as $x_\a$. \par
    \underline{Step 2: $x_\a \rightsquigarrow y_\a$.} \par Consider each entry, going downward along each column, from the rightmost column to the leftmost column of $A$.  Change the entry $x_\a$ to
\begin{equation*}
\aligned
    & x_\a \times \(\text{product of all $y_\b$ to the right of $x_\a$, in its row}\) \\
    \times & \(\text{product of all $y_\b$ above $x_\a$, in its column }\)
    \\
    \times & \(\text{product of all $y_\b$ to the right of the topmost nonzero entry above $x_\a$, in its row}\)\i.
\endaligned
\end{equation*}
As before, relabel the resulting expression as $y_\a$. \par
\underline{Step 3: $y_\a \rightsquigarrow u_\a$.} \par
Let $wy$ be the matrix obtained from $wn$ by replacing every $n_\a$ by $y_\a$. Parts $(ii)$ and $(iii)$ of Theorem~\ref{biratlthm} nearly holds for $wy$: instead of \eqref{tauwu}, $\tau(wy)$ takes the form
\begin{equation*}
    \pm e\Bigl(\sum\limits_{n_{\alpha}\in V_w}\frac{\e_{\a}}{n_\a}\Bigr)\prod_{n_{\a}\in V_w} \abs{n_{\a}}^{t_{\a}}\sgn (n_{\a})^{\eta_{\a}}
\end{equation*}
for some choices of sign $\e_\a=\pm 1$ for each $\a\in Inv(\pi\i)$. Step 3 simply involves letting $u_\a=\e_\a y_\a$. \par
As an example, consider again the example $w_1$ in \eqref{ex1:coords}.
\[\begin{aligned}\left(
\begin{smallarray}{cccc}
 0 & 0 & 0 & 1 \\
 0 & 1 & 0 & n_{2,4} \\
 0 & 0 & 1 & n_{3,4} \\
 1 & n_{1,2} & n_{1,3} & n_{1,4} \\
\end{smallarray}
\right) &\xrightarrow{\substack{n_{1,2} \\ \rightsquigarrow x_{1,2}}} \left(
\begin{smallarray}{cccc}
 0 & 0 & 0 & 1 \\
 0 & 1 & 0 & n_{2,4} \\
 0 & 0 & 1 & n_{3,4} \\
 1 & n_{1,2}+\frac{n_{1,4}-n_{1,3} n_{3,4}}{n_{2,4}} & n_{1,3} & n_{1,4} \\
\end{smallarray}
\right) \xrightarrow{\substack{n_{1,3} \\ \rightsquigarrow x_{1,3}}} \left(
\begin{smallarray}{cccc}
 0 & 0 & 0 & 1 \\
 0 & 1 & 0 & n_{2,4} \\
 0 & 0 & 1 & n_{3,4} \\
 1 & n_{1,2}-\frac{n_{1,3} n_{3,4}}{n_{2,4}} & n_{1,3}+\frac{n_{1,4}}{n_{3,4}} & n_{1,4} \\
\end{smallarray}
\right) \\
& \xrightarrow{x_\a \rightsquigarrow y_\a}\left(
\begin{smallarray}{cccc}
 0 & 0 & 0 & 1 \\
 0 & 1 & 0 & n_{2,4} \\
 0 & 0 & 1 & n_{2,4} n_{3,4} \\
 1 & \frac{n_{1,2} n_{1,3} n_{1,4}-n_{1,3} n_{1,4} n_{2,4}}{n_{2,4}} & \frac{n_{1,3}n_{1,4}+n_{1,4}n_{3,4}}{n_{3,4}} & n_{1,4} n_{2,4} n_{3,4} \\
\end{smallarray}
\right) \\ & \xrightarrow{y_\a \rightsquigarrow u_\a}
\left(
\begin{smallarray}{cccc}
 0 & 0 & 0 & 1 \\
 0 & 1 & 0 & n_{2,4} \\
 0 & 0 & 1 & n_{2,4} n_{3,4} \\
 1 & \frac{-n_{1,2} n_{1,3} n_{1,4}-n_{1,3} n_{1,4} n_{2,4}}{n_{2,4}} & \frac{n_{1,3}n_{1,4}+n_{1,4}n_{3,4}}{n_{3,4}} & n_{1,4} n_{2,4} n_{3,4} \\
\end{smallarray}
\right). \end{aligned}\]
In step 1, the variables in the rightmost column remain unchanged. It is straightforward to verify that the resulting entries agrees with Definition~\ref{def:Rformula} of $R(n_\a)$.
\end{subsection}
\subsection{Algorithm by nonintersecting lattice paths}
This algorithm computes the map $R=R_{w}$, by following Definition~\ref{def:Rformula}. We regard $wn$ as a directed graph as in \eqref{ex1:coords}. For $n_\a\in V_w$ with $\abs{D(n_\a)}=\abs{O(n_\a)}=k$, write
\[D=D(n_\a)=\{1_{t(1)},\ldots,1_{t(k)}\}, \quad O=O(n_\a)=\{n_{\b(1)},\ldots, n_{\b(k)}\},\]
where each $1\leq t(i)\leq r$ denotes the column number and each $\b(i)$ is an index satisfying either $n_{\b(i)}\in V_w$ or $n_{\b(i)}=1_l$ for some $1\leq l\leq r$. 
\par
\underline{Step 1: Connect $O$ and $D$.} \par 
For each permutation $\sigma\in \mathfrak{S}_k$, consider the pairing $(n_{\b(i)},1_{t(\sigma(i))})$, $1\leq i\leq k$ of $O$ and $D$. If any $n_{\b(i)}$ and $1_{t(\sigma(i))}$ are not connected, let $\mathcal{P}_{\sigma}=\widetilde{\mathcal P}_{\sigma}=\emptyset$. If not, find $\mathcal{P}(\{n_{\b(i)}\}\to \{1_{t(\sigma(i))}\})$ for each $1\leq i\leq k$, and let
\[{\mathcal P}_{\sigma}=\left\{\{p_1,\ldots, p_k\} \mid \{p_i\}\in \mathcal{P}(\{n_{\b(i)}\}\to \{1_{t(\sigma(i))}\}),\  1\leq i\leq k\right\}.\]
\par
\underline{Step 2: Exclude intersections.} \par 
From ${\mathcal P}_{\sigma}$, exclude the sets of paths $\{p_1,\ldots,p_k\}$ containing intersecting paths. Any intersection can be detected simply by computing $\prod_{1\leq j\leq k}u(p_j)$, which is a monomial in $\Z[\{n_\a \vert n_\a\in V_w\}]$. An intersection occurs whenever the monomial contains a variable whose exponent is greater than $1$. Let $\widetilde{\mathcal P}_{\sigma}$ be the resulting collection consisting only of sets of disjoint paths connecting each $n_{\b(i)}$ to $1_{t(\sigma(i))}$. Let ${\mathcal P}(n_\alpha)=\bigcup_{\sigma\in \mathfrak{S}_k}\widetilde{\mathcal P}_{\sigma}$. \par

\underline{Step 3: $n_\a \rightsquigarrow u_\a$.} \par
Compute $P(n_\a)$ using ${\mathcal P}(n_\alpha)$ from step 2, following Definition~\ref{def:calPn}. Compute also $\rho(1_{t(i)})$ by Definition~\ref{rhodef} and take
$u_\alpha=(-1)^{k-1}\frac{P(n_\alpha)}{\prod\limits_{1\leq i\leq k}\rho(1_{t(i)})}$.

\bibliographystyle{plain}
\bibliography{WhitDistref}
\end{document}